\newtheorem{theorem}{Theorem}[section]
\newtheorem{lemma}[theorem]{Lemma}
\newenvironment{proofof}[1]{%
  \begin{proof}%
}{%
  \end{proof}%
}
\newenvironment{1proofof}[1]{%
  \begin{proof}%
}{%
  \end{proof}%
}
\newenvironment{2proofof}[1]{%
  \begin{proof}%
}{%
  \end{proof}%
}
\newtheorem{remark}[theorem]{Remark}
\newtheorem*{assumption}{(H)}
\newtheorem{corollary}[theorem]{Corollary}
 \numberwithin{equation}{section}
\journal{arXiv}
\begin{document}

	\begin{frontmatter}
		\title{Rapid boundary stabilization of 1D nonlinear parabolic equations
		\tnoteref{mytitlenote}}
		\tnotetext[mytitlenote]{This work was partially supported by the National Natural Science Foundation of China under the grant 12422118.}
		
		\author[mysecondaryaddress]{Yu Xiao}
		\ead{xiaoyu\_math@whu.edu.cn}
	    \medskip
	      \author[my4address]{Can Zhang\corref{mycorrespondingauthor}}
		\cortext[mycorrespondingauthor]{Corresponding author}
		\ead{canzhang@whu.edu.cn}
		\address[mysecondaryaddress]{School of Mathematics and Statistics, Wuhan University, Wuhan 430072, P.R.China}
		\address[my4address]{School of Mathematics and Statistics, Wuhan University, Wuhan 430072, P.R.China}

        \begin{abstract}
	   In this paper, we focus on the rapid boundary stabilization of 1D nonlinear parabolic equations via the modal decomposition method. The nonlinear term is assumed to satisfy certain local Lipschitz continuity and global growth conditions. Through the
       modal decomposition, we construct a feedback control that modifies only the unstable eigenvalues to achieve spectral reduction. Under this control, we establish locally rapid stabilization by estimating the nonlinearity in Lyapunov stability analysis. Furthermore, utilizing the dissipative property, we derive a globally rapid stabilization result for dissipative systems such as the Burgers equation and the Allen-Cahn equation.
        \end{abstract}

        \begin{keyword}
	    Boundary stabilization \sep nonlinear parabolic equations \sep modal decomposition	
	    \medskip
	    \MSC[2020]  35K55, 93C20, 93D23
        \end{keyword}

    \end{frontmatter}

\section{Introduction}
\subsection{Motivation}
Stability analysis has long been a central topic in control theory, particularly the problem of feedback stabilization. By applying feedback control on the boundary or over a subset of the domain, where the control is selected based on current or historical state information, we expect the closed-loop system to exhibit stability in its large time behavior. A variety of approaches have been developed to address the feedback stabilization problem for partial differential equations (PDEs), such as the backstepping method (e.g., \cite{CN} for the heat equation and \cite{CX} for the Burgers equation), the modal decomposition method (see \cite{KF,SWZ,X} for the heat equation, \cite{L1} for the Kuramoto–Sivashinsky equation, \cite{LP2,PWF} for the Burgers equation, \cite{X1} for the Navier–Stokes equation, \cite{LHM} for the Fisher equation), as well as other methods (e.g., \cite{BW,TWX} based on LQ theory).

As one of the most widely studied classes of PDEs, linear parabolic equations have seen significant progress in feedback stabilization(\cite{CN,X}). However, numerous physical problems involve nonlinearities, and some progress has been made in recent years on the feedback stabilization of nonlinear parabolic equations (e.g., \cite{B, KF2,KF3,LHM,LP2,LP3,PWF}). In particular, the modal decomposition method serves as an important tool. Through spectral analysis and state decomposition, this method uses a finite number of modes to construct a feedback control. Its success lies in the fact that such a control modifies only the unstable eigenvalues, thereby achieving stabilization.

When addressing the controllability of semilinear parabolic equations, the authors of \cite{CT1} combined the modal decomposition method with a constructive Lyapunov function to establish stability over a finite time interval. Based on this stability, they further derived global controllability by using existing results on approximate controllability. In \cite{B,LHM,M2}, relying on a fixed-point theorem rather than Lyapunov analysis, the authors proved the locally rapid stabilization of multi-dimensional semilinear parabolic equations.

Recently, for the case where the nonlinear term is globally Lipschitz continuous, the authors of \cite{KF2,LP3} established the well-posedness of the closed-loop system through the analyticity of the Sturm–Liouville operator. Simultaneously, by utilizing the Lipschitz continuity, they effectively used the norm of the state to bound the nonlinearity. Then, through a Lyapunov analysis similar to that in \cite{CT1}, they demonstrated a globally rapid stabilization result related to the Lipschitz coefficient. Regarding the semilinear equations in \cite{KF3}, the authors also achieved globally rapid stabilization. Taking into account specific nonlinearities, the authors of \cite{LP2,PWF} established the locally rapid stabilization of the Burgers equation. More precisely, they demonstrated that the nonlinear term has a negligible effect in the Lyapunov analysis when the initial value is sufficiently small, and thus attained the exponential stability inherited from the linearized equation. Motivated by these works, estimating the effect of the nonlinear term is of crucial importance in the Lyapunov stability analysis of nonlinear parabolic equations.

The aim of this paper is to construct a boundary control for 1D nonlinear parabolic equations using the modal decomposition method, so as to achieve well-posedness and rapid stabilization.

\subsection{Main result}
Consider the nonlinear parabolic equation with a boundary control
\begin{equation}\label{Dirichlet-sys}
\left\{\begin{array}{ll}
 \partial_t y(t,x) =\partial_x (a(x)\partial_x y( t,x))+{b(x)}y( t,x) +qy( t,x)+f(y( t,x))&\text{for }(t,x)\in \mathbb{R}^+ \times (0,1),\\[2mm]
y(t,0)=u(t)\quad \text{and}\quad y(t,1)=0&\text{for }t\in \mathbb{R}^+,\\[2mm]
y(0,x)=y_0(x )&\text{for }x\in (0,1),
\end{array}\right.
 \end{equation}
where $a\in C^2([0,1])$ with {$\min_{x\in[0,1]}a(x)>0$}, ${b}\in C([0,1])$ with {$\max_{x\in[0,1]}b(x)< 0$}, $q\in \mathbb{R}$, $u(\cdot)$ is the control, and the nonlinear term $f$ satisfies that $f(0)=0$.


Throughout the paper, we suppose the following assumption.
\begin{assumption}
\label{a1}
 The nonlinear term $f$ is assumed to satisfy the following two conditions:
\begin{itemize}
    \item[(i)] For each $y_0 \in H^1(0,1)$ and $r>0$, there exists a constant $C$ depending on $\|y_0\|_{H^1(0,1)}$ and $r$, such that  $$\|f(y_1)-f(y_2)\|_{L^2(0,1)}\le C(\|y_0\|_{H^1(0,1)},r) \|y_1-y_2\|_{H^1(0,1)},\quad \forall y_1,y_2\in B_{{H^1}}(y_0,r),$$
    where $B_{{H^1}}(y_0,r)=\left\{ y\in H^1(0,1):\|y-y_0\|_{H^1(0,1)}\le r\right\} $.
    \item[(ii)] There exists a strictly increasing function  
$c\in C([0,+\infty))$  satisfying $c(0) = 0$, such that 
$$  
\|f(y)\|_{L^2(0,1)} \leq c(\|y\|_{L^\infty(0,1)}) \|y\|_{H^1(0,1)},\quad \forall y \in H^1(0,1).  
$$  
\end{itemize}
    
\end{assumption}


The main result of this paper can be stated as follows.
\begin{theorem}
     \label{main-th1}
      Assume that $f$ satisfies the assumption (H). Given any $\delta>0$, 
      there are constants $\rho>0$, $k\in \mathbb{R}$ and $M\ge 1$, as well as a {feedback operator $\mathcal{K}:L^2(0,1)\to \mathbb{R}$}, all depending only on $\delta$, such that for every $y_0\in H^1_0(0,1)$ satisfying
         \begin{equation*}
             \|y_0\|_{H^1_0(0,1)}\le \rho,
         \end{equation*}
        the system \eqref{Dirichlet-sys} with {\begin{equation}
            \label{U-MAIN}u(t)=\int_0^t e^{k(t-s)}\mathcal{K}(y(s))ds
        \end{equation}}admits a unique solution $$y\in C([0,+\infty);H^1(0,1))\cap 
 C^1(\mathbb{R}^+;L^2(0,1))  $$ verifying the following exponential stability
         \begin{equation}
         \label{exp-s}
             \|y(t)\|_{H^1(0,1)}\le Me^{-\delta t}\|y_0\|_{H^1_0(0,1)},\;\;\forall t>0.
         \end{equation}
 \end{theorem}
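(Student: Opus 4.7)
The plan is to recast \eqref{Dirichlet-sys} together with the dynamic extension of the boundary control as an abstract semilinear Cauchy problem on a product space, then design a finite-dimensional stabilizing feedback via modal decomposition, and finally close the loop in $H^1$ with a tailored Lyapunov function that absorbs the nonlinearity using Hypothesis (H)(ii). First I differentiate \eqref{U-MAIN} to get $\dot u = ku + \mathcal K(y)$ with $u(0)=0$, and lift the inhomogeneous boundary by setting $w(t,x):=y(t,x)-(1-x)u(t)\in H^1_0(0,1)$. The pair $(w,u)$ then satisfies an abstract equation
$$\frac{d}{dt}\begin{pmatrix} w\\ u\end{pmatrix}=\mathcal A\begin{pmatrix} w\\ u\end{pmatrix}+\mathcal B\,\mathcal K(y)+\mathcal F(w,u)$$
on $L^2(0,1)\times\mathbb R$, where $\mathcal A$ is built from the Sturm--Liouville operator $A\varphi:=(a\varphi')'+(b+q)\varphi$ on $H^2\cap H^1_0$, augmented by finite-rank terms from the lift and by the scalar $k$; $\mathcal B$ is the one-dimensional input map; and $\mathcal F$ contains $f(w+(1-x)u)$. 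Because $A$ is self-adjoint with compact resolvent, its eigenpairs $\{(\lambda_n,e_n)\}$ form an $L^2$-orthonormal basis with $\lambda_n\to-\infty$; choose $N$ with $\lambda_N>-2\delta\geq\lambda_{N+1}$.

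Next I perform the spectral reduction. Projecting the linear part onto the first $N$ modes of $w$ and keeping $u$ as the last coordinate produces a reduced linear system $\dot Y_N=A_N Y_N+B_N v$, $Y_N\in\mathbb R^{N+1}$, $v=\mathcal K(y)$. Non-vanishing of $e_n'(0)$ combined with the structure of the lift makes $(A_N,B_N)$ Kalman-controllable, so the pole-assignment theorem yields a row vector $K_N$ for which all eigenvalues of $A_N-B_NK_N$ lie strictly to the left of $-2\delta$; I then set $\mathcal K(y):=-K_N\Pi_N^{\mathrm{ext}} y$, where $\Pi_N^{\mathrm{ext}}$ extracts the first $N$ modes of $w=y-(1-x)u$, and pick $k$ sufficiently negative (depending only on $\delta$ and $N$) to damp the $u$-component. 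A symmetric positive definite $P$ gives a quadratic Lyapunov function $V_N(Y_N)=\langle P Y_N,Y_N\rangle$ with $\frac{d}{dt}V_N\leq -4\delta V_N$ along the reduced linear flow.

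For the full closed-loop dynamics I propose the weighted Lyapunov function
$$V(w,u)\;:=\;V_N(Y_N)\;+\;\mu\sum_{n>N}(-\lambda_n)\,|w_n|^2,$$
which, for suitable $\mu>0$, is equivalent to $\|y\|_{H^1(0,1)}^2+|u|^2$. Along the linearized flow the tail decays at rate at least $-2\lambda_{N+1}\geq 4\delta$, while cross terms produced by the lift between high modes, the finite block and $u$ are absorbed by Young's inequality after tuning $\mu$ and $k$. To treat the nonlinearity I use Hypothesis (H)(ii) together with the embedding $H^1(0,1)\hookrightarrow L^\infty(0,1)$: on the ball $\|y\|_{H^1}\leq\rho$ one has $\|f(y)\|_{L^2}\leq c(C_\infty\rho)\|y\|_{H^1}$ with $c(C_\infty\rho)\to0$ as $\rho\to0$, so its contribution to $\dot V$ is bounded by $\varepsilon(\rho)V$ with $\varepsilon(\rho)\to0$. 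Picking $\rho$ small enough gives $\dot V\leq -2\delta V$, hence \eqref{exp-s} after adjusting $M$. Local well-posedness in $C([0,T];H^1)\cap C^1((0,T);L^2)$ follows from the analyticity of the closed-loop semigroup together with Hypothesis (H)(i) via a contraction argument on a small $H^1$-ball, and the Lyapunov estimate extends the solution globally in time while preserving $\|y(t)\|_{H^1}\leq\rho$.

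The main obstacle is the combination of Steps 2--3: one has to verify, for the scalar input $\mathcal K(y)$ appearing after the lift, the Hautus-type condition that ties the eigenfunctions of $A$ to the lift profile $1-x$ (so the $(N+1)$-dimensional reduction is really controllable), and then track carefully the cross terms between $u$, the unstable finite block and the infinite tail in the $H^1$-weighted $V$. Once these algebraic couplings are absorbed, the smallness extracted from (H)(ii) closes the loop, but only because a strict margin $-2\delta$ vs $-\delta$ has been built into the pole-placement step to leave room for the $\varepsilon(\rho)V$ nonlinear remainder.
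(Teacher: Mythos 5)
Your route is essentially the paper's: dynamic extension $\dot u=ku+\mathcal K(y)$, lifting by $(1-x)u$, modal truncation with Kalman controllability via $\partial_x\varphi_n(0)\neq 0$, pole placement with margin $2\delta$, and a Lyapunov function combining a quadratic form on the finite part with an $H^1$-weighted tail, the nonlinearity being absorbed through (H)(ii). The only structural difference is cosmetic: you keep the cut-off at $N$ and tune a small tail weight $\mu$ to absorb the cross terms by Young's inequality, whereas the paper introduces a second, larger cut-off $\tilde N$ (and parameters $\alpha_i$) so that the tail sums $\sum_{n>\tilde N}a_n^2$, $\sum_{n>\tilde N}b_n^2$ and $\|P\|_2^2/\tilde N$ become explicitly small, which is then packaged as an LMI; both devices work and buy the same thing.

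There is, however, one genuine gap. Your key estimate ``on the ball $\|y\|_{H^1}\le\rho$ the nonlinear contribution to $\dot V$ is at most $\varepsilon(\rho)V$, hence $\dot V\le-2\delta V$, and the Lyapunov estimate extends the solution globally while preserving $\|y(t)\|_{H^1}\le\rho$'' is circular as stated: the differential inequality is valid only while the trajectory stays in the ball, and you invoke the inequality to conclude that it stays in the ball. To close this you need the continuity/bootstrap argument the paper carries out in Step 2 of Theorem 3.9: define $t^*$ as the first time the smallness threshold $\sigma$ is reached, show the Lyapunov decay on $[0,t^*)$, convert it back to the norm via the two-sided bounds on $P$ (Lemma 3.4), and derive a contradiction — crucially, this forces the initial bound $\rho$ to be taken strictly smaller than the working threshold by the overshoot factor ($\rho=\sigma/(\tilde c_2^2\|\mathcal D\|_{\mathcal L}\hat M)$), since the decay estimate $\hat M e^{-\delta t}$ allows transient growth; a $\rho$ chosen merely so that $\varepsilon(\rho)$ is small does not suffice. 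Two smaller points: your $\mathcal K(y):=-K_N\Pi_N^{\mathrm{ext}}y$ with $\Pi_N^{\mathrm{ext}}$ acting on $w=y-(1-x)u$ depends on $u$, so it is not yet the operator $\mathcal K:L^2(0,1)\to\mathbb R$ of the statement; you must fold the $u$-part into the scalar gain, as in $k=k_0+\sum_{j}k_jb_j$, $\mathcal K y=\sum_j k_j\langle y,\varphi_j\rangle$. Relatedly, $k$ is then determined by the pole placement on the full $(N+1)$-dimensional pair, not ``picked sufficiently negative'' as a separate high-gain parameter — the latter, taken literally, would require an additional cascade argument that you have not given.
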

 
Several remarks are given in order.

\begin{remark}
   As the exponential decay rate $\delta$ can be chosen arbitrarily, the resulting stabilization is called rapid stabilization (see \cite{LWXY,TWX} for the characterizations of rapid stabilization in linear control systems).
\end{remark}
\begin{remark}
   The control form \eqref{U-MAIN} is not the instantaneous feedback of the common type, but a feedback with memory, where the memory kernel $e^{k (t-\cdot)}$ acts on $\mathcal{K}(y(\cdot))$ defined in \eqref{feedback-k} below. In fact, if we regard $(y,u)$ as the state of the system \eqref{Dirichlet-sys}, then the original system is a PDE-ODE coupled system, i.e., we have an additional equation
    $$
    \frac{d}{dt}u(t)=ku(t)+\mathcal{K}(y(t))=\left(\mathcal{K},k\right)\begin{pmatrix}
        y(t)\\ u(t) \end{pmatrix},\quad \forall t>0.
    $$
    This implies that our control strategy takes an instantaneous feedback form with respect to $(y(t),u(t))$.
\end{remark}
\begin{remark}
The assumption (H) includes local Lipschitz continuity and global growth conditions. Among these, the Lipschitz continuity condition ensures the well-posedness of the closed-loop system, while the growth condition guarantees the estimability of the nonlinearity in Lyapunov analysis.
\end{remark}

The rest of the paper is organized as follows. Section \ref{sec2} presents some preliminaries. Section \ref{sec3} gives the feedback operator and provides the proof of {Theorem} \ref{main-th1}. Finally, as applications of {Theorem} \ref{main-th1}, {Section \ref{sec5} achieves globally rapid stabilization for the Burgers equation, as well as the Allen-Cahn equation.}

\section{Preliminaries}\label{sec2}
\subsection{Notation}
Given $p\ge 1$ and $m\ge 1$, $L^p(0,1)$ and $H^m(0,1)$ are the standard Lebesgue and Sobolev spaces of functions $f:(0,1)\to \mathbb{R}$ endowed with the norms $\|\cdot\|_{L^p(0,1)}$ and $\|\cdot\|_{H^m(0,1)}$. $H_0^{m}(0,1)$ is the closure in $H^{m}(0,1)$ of $C^{\infty}$ functions with compact support in $[0,1]$.
 
 When  $X_1$ and $X_2$ are two Hilbert spaces, we write
    $\mathcal{L}(X_1,X_2)$ as the space of all linear and bounded operators mapping from $X_1$ to $X_2$, equipped with the norm $\|\cdot\|_{\mathcal{L}}$, and further
    write  $\mathcal{L}(X_1):=\mathcal{L}(X_1,X_1)$. 
    The Euclidean norm on $\mathbb{R}^n$ is denoted by $|\cdot|_2$. We use $\|\cdot\|_2$ to represent the matrix norm induced by $|\cdot|_2$. For a symmetric matrix $P\in\mathbb{R}^{N\times N}$ or a self-adjoint operator $P:D(P)\to X$, the notation $ P\succeq 0$ (resp. $P\succ0$) means that $P$ is positive semi-definite (resp. positive definite). The eigenvalue of a matrix $P$ with the maximum (resp. minimum) real part is denoted by $\sigma_{\max}(P)$ (resp. $\sigma_{\min}(P)$).
   
   Given a Hilbert space $X$, let {$f\in C^1(\mathbb{R}^+,X)$, its derivative is denoted as $\dot{f}(t):=\frac{df(t)}{dt}$}. For two positive constants $F_1,F_2$, we say $F_1\lesssim F_2$ if there exists a positive constant $C$ independent of $F_i$ ($i=1,2$), such that $F_1\le CF_2$. Moreover, if $F_1\lesssim F_2$ and $F_2 \lesssim F_1$, we denote $F_1\sim F_2$. For convenience, we write $C\left(\cdots\right)$ for a positive constant that depends on what are included in the brackets. The same can be said about $C_1\left(\cdots\right),M_1\left(\cdots\right),\sigma_1\left(\cdots\right)$ and so on.

\subsection{Sturm-Liouville operator}
	
Let us introduce the Sturm-Liouville operator $A$ defined by
\begin{equation*}
\label{Sturm-Liouville-1}
       Af=-(\partial_x (a(x)\partial_x f)+{b(x)}f), \quad D( A)=H^2(0,1)\cap H^1_0(0,1).
\end{equation*}
       
\begin{lemma}
\label{A-property}
    The following statements about $A$ are valid (\cite[Chapter 2 and Chapter 3]{TW}).
\begin{enumerate}
\item[(i)]
${A}$ is self-adjoint and generates an analytic $C_0$-semigroup $\{e^{{A}t}\}_{t\ge 0}$ on $L^2(0,1)$. Its spectrum consists of a countable set of simple and real eigenvalues $\{\lambda_n\}_{n\geq 1}$ with $\lambda_n = \mathcal{O}(n^2)$. The corresponding eigenfunctions $\{\varphi_n\}_{n\ge 1}$ form an orthonormal basis in $L^2(0,1)$. 
        \item[(ii)] $A$ is diagonalizable and 
        \begin{equation*}\label{express3}
            \begin{aligned}
                Af=\sum_{n\ge 1}\lambda_n\left\langle f,\varphi_n \right\rangle\varphi_n,\;\; \forall f \in D(A).
            \end{aligned}
        \end{equation*}

\item[(iii)] ${A}$ admits a square root ${A}^{\frac{1}{2}}$ given by
\begin{equation*}\label{express4}
                {A}^{\frac{1}{2}}f=\sum_{n\ge 1}\lambda_n^{\frac{1}{2}}\left\langle f,\varphi_n \right\rangle\varphi_n,\;\; \forall  f \in D({A}^{\frac{1}{2}})=H^1_0(0,1).
        \end{equation*}
        {Moreover, there are two constants  $k_1\le 1\le k_2$ such that 
        \begin{equation*}
        k_1\|f\|_{H^1_0(0,1)} \le \|f\|_{D(A^{\frac{1}{2}})} \le k_2\|f\|_{H^1_0(0,1)},\;\; \forall f\in D(A^{\frac{1}{2}}).
        \end{equation*}
    }
\end{enumerate}
\end{lemma}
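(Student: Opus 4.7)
The plan is to base everything on the symmetric bilinear form
\[
\mathfrak{a}(u,v) = \int_0^1 a(x) u'(x) v'(x)\,dx - \int_0^1 b(x) u(x) v(x)\,dx
\]
on $H^1_0(0,1)$, which is continuous, symmetric and coercive because $\min a > 0$ and $\max b < 0$. By the Friedrichs/Kato representation theorem, the unique self-adjoint operator on $L^2(0,1)$ associated with $\mathfrak{a}$ has form domain $H^1_0(0,1)$, and one-dimensional elliptic regularity applied to $-(au')' - bu = g \in L^2$ identifies its operator domain with $H^2(0,1) \cap H^1_0(0,1)$ and its action with $A$. Since $A^{-1}$ factors through the compact embedding $H^1_0(0,1) \hookrightarrow L^2(0,1)$, the resolvent of $A$ is compact.

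The spectral theorem for self-adjoint operators with compact resolvent then produces the real eigenvalue sequence $\{\lambda_n\}_{n\ge 1}$ with $\lambda_n \to +\infty$ and an $L^2$-orthonormal eigenbasis $\{\varphi_n\}_{n\ge 1}$; strict positivity $\lambda_n > 0$ is immediate from coercivity. Simplicity is the standard ODE uniqueness argument: by interior regularity each $\varphi_n$ is a classical solution of a second-order linear ODE, and the Cauchy data $\varphi_n(0) = 0$, $\varphi_n'(0) = c$ determine it uniquely up to the scalar $c$. For the asymptotic $\lambda_n = \mathcal{O}(n^2)$, I would use the Courant-Fischer min-max characterization to sandwich $\lambda_n$ between $c_- (n\pi)^2$ and $c_+ (n\pi)^2$, where $(n\pi)^2$ are the Dirichlet eigenvalues of $-\partial_x^2$ on $(0,1)$ and $c_\pm$ depend only on the uniform bounds of $a$ and $b$. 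Analyticity of $\{e^{At}\}_{t\ge 0}$ then follows from the standard fact that a self-adjoint operator bounded below generates an analytic $C_0$-semigroup of angle $\pi/2$.

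For part (ii) the diagonalization identity is Parseval applied to the orthonormal eigenbasis. For part (iii), the functional calculus immediately yields the series definition of $A^{1/2}$ on
\[
D(A^{1/2}) = \Bigl\{f \in L^2(0,1) : \sum_{n\ge 1} \lambda_n |\langle f,\varphi_n\rangle|^2 < \infty\Bigr\}.
\]
The identity $\|A^{1/2}f\|_{L^2}^2 = \mathfrak{a}(f,f)$, valid by spectral calculus and density, shows that $D(A^{1/2})$ coincides with the form domain $H^1_0(0,1)$, and the norm equivalence $k_1\|f\|_{H^1_0} \le \|A^{1/2}f\|_{L^2} \le k_2\|f\|_{H^1_0}$ follows from the continuity and coercivity of $\mathfrak{a}$ combined with Poincaré's inequality on $(0,1)$; one can then replace $k_1$ by $\min(k_1,1)$ and $k_2$ by $\max(k_2,1)$ to enforce the normalization $k_1 \le 1 \le k_2$.

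The only step beyond routine functional analysis is the asymptotic $\lambda_n = \mathcal{O}(n^2)$; the min-max comparison above gives two-sided bounds of the correct order with little effort, but sharper leading-order constants would instead require the Prüfer or Liouville-Green transformation applied to the underlying Sturm-Liouville ODE. All remaining parts are immediate once the bilinear form framework has been fixed, which is why the standard reference \cite{TW} is invoked.
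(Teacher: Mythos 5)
Your proposal is correct and follows essentially the same route as the paper, which gives no proof of its own but delegates the lemma entirely to \cite[Chapters 2 and 3]{TW}: the form-method construction of $A$, compact resolvent plus the spectral theorem, the ODE initial-value argument for simplicity, the min--max comparison with the Dirichlet Laplacian for $\lambda_n=\mathcal{O}(n^2)$, and the identification of $D(A^{\frac{1}{2}})$ with the form domain $H^1_0(0,1)$ are exactly the standard ingredients of that reference. The one cosmetic point worth fixing is the semigroup sign convention: since $A$ is positive (coercive) here, it is $-A$ that is bounded above and generates the analytic semigroup (as the paper itself writes in its well-posedness proof), so your phrase ``bounded below'' should become ``bounded above'' once the sign is flipped.
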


{Denoting by $X=L^2(0,1)$, we recall that the operator $A$ has a unique extension $\tilde{A}\in \mathcal{L}(X,D(A^*)')$, where $D(A^*)'$ is the dual of $D(A^*)$ with respect to the pivot space $X$. Here and throughout the paper, we still use $A$ to denote the extended operator $\tilde{A}$.}
    \subsection{Sobolev inequalities}
    We introduce some Sobolev inequalities, which will be used frequently later  (see \cite[Chapter 13]{Brezis}).
    \begin{lemma}[Poincar\'{e} inequality]
\label{PO}
There exists a constant $c_1>0$ such that
    \begin{equation}
        \label{po}
c_1\|y\|^2_{L^2(0,1)} \le \|\partial_x y\|^2_{L^2(0,1)},\;\; \forall y\in H^1_0(0,1).
\end{equation}
\end{lemma}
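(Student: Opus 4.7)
The plan is to establish the Poincaré inequality on the interval $(0,1)$ for functions in $H_0^1(0,1)$ via the fundamental theorem of calculus together with a density argument, which is the standard textbook route (the result is cited from Brezis).

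First I would reduce to smooth compactly supported functions. Since $C_c^\infty(0,1)$ is dense in $H_0^1(0,1)$ by definition, and both sides of the inequality are continuous with respect to the $H^1$-norm, it suffices to prove the estimate for $y\in C_c^\infty(0,1)$; the general statement then follows by passing to the limit on a sequence $y_n\to y$ in $H_0^1(0,1)$.

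Next, for $y\in C_c^\infty(0,1)$, I would use $y(0)=0$ and the fundamental theorem of calculus to write
\begin{equation*}
y(x)=\int_0^x \partial_x y(s)\,ds,\quad x\in[0,1].
\end{equation*}
Applying the Cauchy-Schwarz inequality gives
\begin{equation*}
|y(x)|^2\le x\int_0^x |\partial_x y(s)|^2\,ds\le x\,\|\partial_x y\|_{L^2(0,1)}^2,
\end{equation*}
and integrating this pointwise bound over $x\in(0,1)$ yields
\begin{equation*}
\|y\|_{L^2(0,1)}^2\le \tfrac{1}{2}\,\|\partial_x y\|_{L^2(0,1)}^2,
\end{equation*}
so the inequality holds with constant $c_1=2$ (any $c_1\in(0,2]$ works). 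Combined with the preceding density step, this gives the claim for all $y\in H_0^1(0,1)$.

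There is essentially no obstacle here since this is a classical one-dimensional result; the only point to be careful about is the density step, which identifies $H_0^1(0,1)$ with the closure of $C_c^\infty(0,1)$ and thereby justifies the use of $y(0)=0$ pointwise. If one wanted the sharp constant $\pi^2$ (the first Dirichlet eigenvalue of $-\partial_x^2$ on $(0,1)$), one could alternatively expand $y$ in the sine basis $\{\sqrt{2}\sin(n\pi x)\}_{n\ge 1}$ and use Parseval's identity, but for the qualitative statement of the lemma as written, the elementary argument above is sufficient.
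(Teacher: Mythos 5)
Your proof is correct. The paper gives no proof of this lemma, simply citing it from Brezis, and your argument (fundamental theorem of calculus plus Cauchy--Schwarz on $C_c^\infty(0,1)$, then density) is the standard one, yielding the valid constant $c_1=2$; the computation $\int_0^1 x\,dx=\tfrac12$ and the continuity of both sides in the $H^1$-norm are all in order.
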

\begin{lemma}[Gagliardo-Nirenberg interpolation inequality]
\label{g-n}
Assume that $k, j \in \mathbb{N}$ with $j<k$. Let $\theta>0$ and $p>0$ satisfy that
$$ j / k \leq \theta \leq 1\quad \text{and}\quad
\frac{1}{p}-j=\frac{1}{2}-\theta k.
$$
Then there exists a constant $G=G\left(k,\theta\right)$ such that
$$
\|\partial_x^j y\|_{L^p(0,1)} \leq G\left\|\partial_x^k y\right\|_{L^2(0,1)}^\theta\|y\|_{L^2(0,1)}^{1-\theta}+G\|y\|_{L^2(0,1)},\;\; \forall y \in H^{k}(0,1).
$$
\end{lemma}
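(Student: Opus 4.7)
My plan is to reduce the estimate to its classical analogue on the whole line $\mathbb{R}$ via an extension operator and then absorb the mismatch into the additive $G\|y\|_{L^2}$ term. To this end, I fix a reflection-based extension $E: H^k(0,1) \to H^k(\mathbb{R})$ with compact support (for instance, iterated reflection across the endpoints followed by multiplication by a smooth cutoff), for which one has $\|Ey\|_{L^2(\mathbb{R})} \leq C_k \|y\|_{L^2(0,1)}$ and $\|\partial_x^k Ey\|_{L^2(\mathbb{R})} \leq C_k\bigl(\|\partial_x^k y\|_{L^2(0,1)} + \|y\|_{L^2(0,1)}\bigr)$, together with $(Ey)|_{(0,1)} = y$. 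The entire problem then reduces to proving the homogeneous full-line inequality $\|\partial_x^j u\|_{L^p(\mathbb{R})} \leq G \|\partial_x^k u\|_{L^2}^{\theta}\|u\|_{L^2}^{1-\theta}$ for $u \in H^k(\mathbb{R})$ under the stated parameter constraints, and then cleaning up.

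I would assemble the full-line inequality from three classical building blocks. The first is the log-convexity of $L^2$-norms of derivatives, $\|\partial_x^j u\|_{L^2(\mathbb{R})} \leq \|\partial_x^k u\|_{L^2(\mathbb{R})}^{j/k}\|u\|_{L^2(\mathbb{R})}^{1-j/k}$, which follows from Plancherel combined with H\"older in frequency applied to $|\xi|^{2j} = (|\xi|^{2k})^{j/k}\cdot 1^{1-j/k}$. The second is the sharp 1D Sobolev bound $\|v\|_{L^\infty(\mathbb{R})}^2 \leq 2\|v\|_{L^2(\mathbb{R})}\|\partial_x v\|_{L^2(\mathbb{R})}$ for $v\in H^1(\mathbb{R})$, obtained by applying the fundamental theorem of calculus to $v^2$. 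The third is the H\"older interpolation $\|w\|_{L^p(\mathbb{R})} \leq \|w\|_{L^2}^{2/p}\|w\|_{L^\infty}^{1-2/p}$ for $p\geq 2$, which is precisely the admissible range forced by $\theta \leq 1$ and the scaling identity $1/p - j = 1/2 - \theta k$. Applying the third to $w = \partial_x^j u$, invoking the second on $\partial_x^j u$, and feeding the resulting $L^2$-norms of $\partial_x^j u$ and $\partial_x^{j+1} u$ into the first yields a bound of the form $G\|\partial_x^k u\|_{L^2}^{\theta}\|u\|_{L^2}^{1-\theta}$ with the correct exponents, as can be checked by the scaling identity above.

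Combining the full-line inequality for $u = Ey$ with the extension bounds produces
\[\|\partial_x^j y\|_{L^p(0,1)} \leq G\bigl(\|\partial_x^k y\|_{L^2(0,1)} + \|y\|_{L^2(0,1)}\bigr)^{\theta}\|y\|_{L^2(0,1)}^{1-\theta},\]
and the elementary inequality $(a+b)^{\theta} \leq a^{\theta} + b^{\theta}$, valid for $a,b\geq 0$ and $\theta \in [0,1]$, splits the right-hand side into the two terms announced in the statement. The main obstacle is the exponent bookkeeping in the middle step: one must verify carefully that the powers produced by successively applying the three building blocks reconcile with the scaling constraint $1/p - j = 1/2 - \theta k$ throughout the full admissible range of $(p,\theta,j,k)$. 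By contrast, the extension construction and the final $(a+b)^\theta$ manipulation are routine.
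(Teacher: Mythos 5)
The paper does not prove this lemma at all: it is stated as a classical fact and attributed to Br\'ezis, so there is no in-paper argument to compare against and your proof has to stand on its own. On its own terms the skeleton is sound, and the exponent bookkeeping you were worried about does close up: applying H\"older in $p$, then the Agmon bound, then Plancherel log-convexity to $\partial_x^j u$ and $\partial_x^{j+1}u$ gives $\|\partial_x^k u\|_{L^2}$ the exponent $\frac{2j+1-2/p}{2k}$, which is exactly $\theta$ under $\frac1p - j = \frac12 - \theta k$, with $\|u\|_{L^2}$ carrying the complementary exponent by homogeneity. One small correction there: the restriction $p\ge 2$ needed for the H\"older step $\|w\|_{L^p}\le\|w\|_{L^2}^{2/p}\|w\|_{L^\infty}^{1-2/p}$ is forced by $\theta\ge j/k$ (equivalent to $1/p\le 1/2$), not by $\theta\le 1$ as you wrote; and $p=+\infty$, which the paper actually uses in its Remark, is covered since $2/p=0$ makes that step vacuous.

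The one genuine gap is the extension bound. A reflection-plus-cutoff operator does \emph{not} directly give $\|\partial_x^k Ey\|_{L^2(\mathbb{R})}\le C_k\bigl(\|\partial_x^k y\|_{L^2(0,1)}+\|y\|_{L^2(0,1)}\bigr)$: the Leibniz rule applied to the cutoff produces every intermediate derivative of $y$, so what you actually get is $C_k\|y\|_{H^k(0,1)}$. To collapse this to the two-term right-hand side you need $\|\partial_x^m y\|_{L^2(0,1)}\le C\bigl(\|\partial_x^k y\|_{L^2(0,1)}+\|y\|_{L^2(0,1)}\bigr)$ for $0<m<k$, which is precisely the $p=2$, $j=m$, $\theta=m/k$ instance of the lemma you are proving. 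As written the argument is therefore circular at this point. The fix is standard --- prove the interval interpolation inequality for intermediate $L^2$ derivatives independently (Landau--Kolmogorov on an interval, or an Ehrling-type compactness argument) before constructing $E$ --- but it must be stated explicitly rather than absorbed into the claimed properties of the extension operator. With that repair, and noting that for the two instances the paper actually invokes ($k=1$ there are no intermediate derivatives, and $k=2$, $j=1$, $p=2$ admits a one-line integration-by-parts proof), the argument is complete.
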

\begin{remark}
    We give some examples that are frequently used in applications.
    \begin{enumerate}
        \item If $p=+\infty$,  $k=1$ and $j=0$, then there is a constant $\tilde{c}_2>1$ so that
        \begin{equation}
        \label{Agmon}
            \| y\|_{L^{\infty}(0,1)} \leq \tilde{c}_2\left\| y\right\|_{H^{1}(0,1)},\;\; \forall y\in H^1(0,1).
        \end{equation}
        This, combined with Poincar\'{e} inequality \eqref{po}, implies that 
        \begin{equation}
        \label{Agmon1}
            \| y\|_{L^{\infty}(0,1)} \leq c_2\left\|\partial_x y\right\|_{L^{2}(0,1)}^{\frac{1}{2}}\|y\|_{L^{2}(0,1)}^{\frac{1}{2}},\;\; \forall y\in H_0^1(0,1),
        \end{equation}
        where the constant $c_2>1$ is independent of $y$. 
         \item If $p=2$, $k=2$ and $j=1$, then there exists a constant $c_3>1$ such that
        \begin{equation}
        \label{interpolation}
            \|\partial_x y\|_{L^{2}(0,1)} \leq c_3\left\| y\right\|_{H^{2}(0,1)}^{\frac{1}{2}}\|y\|_{L^{2}(0,1)}^{\frac{1}{2}},\;\;\forall y\in H^2(0,1).
        \end{equation}
    \end{enumerate}
\end{remark}
\section{Proof of Theorem \ref{main-th1}}\label{sec3}
This section is divided into three parts. We begin with the design of the feedback control. Then, we verify the well-posedness of the closed-loop system. Finally, as the main part of the proof, we focus on the rapid stabilization of \eqref{Dirichlet-sys}.
\subsection{Control design}
 
 For any $u\in \mathbb{R}$, we define $\mathcal{D}u=(1-x)u$ for $x\in(0,1)$\footnote{Actually $\mathcal{D} \in \mathcal{L}(\mathbb{R},H^1(0,1))$ with $\|\mathcal{D}\|_{\mathcal{L}}=\frac{3+\sqrt{3}}{3}>1$.}. Supposing $u\in C^1(\mathbb{R}^+)$ with $u(0)=0$, we denote $w(t)=y(t)-\mathcal{D} u(t)$ for $ t\ge 0$. Through the change of variables, it holds that
\begin{equation*}
    \begin{aligned}
\dot{w}(t)+{A} w (t)& =\dot{y}(t)-\mathcal{D} \dot{u}(t)+A y(t)-A \mathcal{D}u(t) \\
& =q w(t)+q\mathcal{D}u(t)-A \mathcal{D}u(t)-\mathcal{D} \dot{u}(t)+f\left(\mathcal{D}u(t)+w(t)\right), \;\;\forall t>0.
\end{aligned}
\end{equation*}

Taking arbitrarily $N\in \mathbb{N}$, we denote $w_n(t)=\langle w(t),\varphi_n\rangle$ and $ Y_N(t)=
(u(t),w_1(t) ,\cdots ,w_N(t))\in \mathbb{R}^{N+1}$. Consider the following \textbf{control design:}
\begin{equation}
    \label{controller}
    \dot{u}(t)=KY_N(t), \quad \forall t>0, 
\end{equation}
\text{where $K=[k_0,k_1,\cdots,k_N]\in \mathbb{R}^{1\times (N+1)}$}. Then the controlled system reads as
\begin{equation}
    \label{controlled-eqs}
    \left\{\begin{array}{ll}
 \dot{w}(t)+A w (t)=q w(t)+q\mathcal{D}u(t)-A \mathcal{D}u(t)-\mathcal{D} (KY_N(t))+f\left(\mathcal{D}u(t)+w(t)\right)&\text{ for } t\in \mathbb{R}^+,\\[2mm]
\dot{u}(t)=KY_N(t)&\text{ for } t\in  \mathbb{R}^+,\\[2mm]
w(0 )=y_0\quad \text{and} \quad u(0)=0.
\end{array}\right.
\end{equation}

\begin{lemma}\label{lemma-mode}
    If the system \eqref{controlled-eqs} is well-posed, i.e., there exists $T>0$ such that the system \eqref{controlled-eqs} has a unique classical solution\footnotemark[2] \footnotetext[2]{For the definition of the classical solutions, we refer to \cite[Chapter 4]{Pazy}.}, then for every $n\ge 1$, $w_n(t)$ satisfies the following equation
    \begin{equation}
        \label{mode-eq}  \dot{w}_n(t)+\lambda_n w_n(t)=qw_n(t)+a_n u(t) +b_n KY_N(t)+f_n(t),\;\;\forall t\in (0,T),
    \end{equation}
   where $a_n:=\langle q\mathcal{D}(1)-A \mathcal{D}(1),\varphi_n\rangle,b_n:= -\langle  \mathcal{D} (1),\varphi_n\rangle$ and $f_n(t):=\langle f\left(\mathcal{D}u(t)+w(t)\right),\varphi_n\rangle$. Moreover, $Y_N(t)$ satisfies the following equation
    \begin{equation}
    \label{mode-eq-new}
    \dot{Y}_N(t)=(A_0+B_0K)Y_N(t)+R_N(t),\;\;\forall t\in (0,T),
\end{equation}
where $A_0:=\left(\begin{array}{ccccc}
0 & 0 &0& \cdots &0 \\
a_1 &q-\lambda_1&0& \cdots & 0 \\
\vdots  & \vdots &\vdots& & \vdots\\
a_N&0&0&\cdots&q-\lambda_N
\end{array}\right)$, $B_0:=\begin{pmatrix}
1\\
b_1\\
\vdots \\
b_N
\end{pmatrix}$ and $R_N(t):=\begin{pmatrix}
0\\
 f_1(t)\\
\vdots \\
 f_N(t)
\end{pmatrix}$.
\end{lemma}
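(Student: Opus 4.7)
The plan is a direct projection of the first equation in \eqref{controlled-eqs} onto the orthonormal basis $\{\varphi_n\}_{n\ge 1}$. Under the assumed classical regularity, $w(t)\in D(A)$ and every term in that equation lies in $L^2(0,1)$, provided $A\mathcal{D}u(t)$ is read through the extension of $A$ recorded after Lemma \ref{A-property} (since $\mathcal{D}u(t)=(1-x)u(t)\notin D(A)$ whenever $u(t)\ne 0$). For each fixed $n$ I would take the inner product with $\varphi_n\in D(A)=D(A^*)$ and evaluate term by term.

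Specifically: $\langle\dot w(t),\varphi_n\rangle=\dot w_n(t)$ by commuting the time derivative with a time-independent test function and using $w\in C^1(\mathbb{R}^+;L^2)$; $\langle Aw(t),\varphi_n\rangle=\lambda_n w_n(t)$ by self-adjointness and $A\varphi_n=\lambda_n\varphi_n$; linearity of $\mathcal{D}:\mathbb{R}\to H^1(0,1)$ lets me pull the scalars $u(t)$ and $KY_N(t)$ outside, producing $\langle q\mathcal{D}u(t)-A\mathcal{D}u(t),\varphi_n\rangle=a_n u(t)$ and $\langle-\mathcal{D}(KY_N(t)),\varphi_n\rangle=b_n KY_N(t)$ with $a_n$ and $b_n$ exactly as in the statement; and $\langle f(\mathcal{D}u(t)+w(t)),\varphi_n\rangle=f_n(t)$ is the definition. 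Reassembling the contributions yields \eqref{mode-eq}.

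For the vector identity \eqref{mode-eq-new}, I would stack the control law $\dot u(t)=KY_N(t)$ on top of the $N$ scalar identities $\dot w_n(t)=(q-\lambda_n)w_n(t)+a_n u(t)+b_n KY_N(t)+f_n(t)$ for $n=1,\dots,N$. The zero first row of $A_0$ together with the leading $1$ of $B_0$ encodes $\dot u=KY_N$ with no nonlinear contribution; the diagonal entries $q-\lambda_n$, the first-column entries $a_n$ of $A_0$, the remaining entries $b_n$ of $B_0$, and the vector $R_N(t)$ are then read off component-wise from the modal equations.

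The only step that demands more than mechanical bookkeeping is making sense of $A\mathcal{D}u$. Either one invokes the extended operator $\tilde A\in\mathcal{L}(X,D(A^*)')$ so that the pairing $\langle A\mathcal{D}u,\varphi_n\rangle$ is unambiguous, or, more concretely, one integrates by parts directly in $\int_0^1 (a y_x)_x\varphi_n\,dx$ on the original $y$-equation; the latter picks up a boundary contribution $a(0)u(t)\varphi_n'(0)$ at $x=0$ (which is precisely where the boundary control enters), and after collecting it together with the interior integrals one recovers the same scalar $a_n$. Everything else in the argument is routine linear bookkeeping.
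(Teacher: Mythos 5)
Your proposal is correct and follows essentially the same route as the paper: project the first equation of \eqref{controlled-eqs} onto each $\varphi_n$ to get \eqref{mode-eq}, then stack $\dot u=KY_N$ with the first $N$ modal equations to read off $A_0$, $B_0K$, and $R_N$ in \eqref{mode-eq-new}. Your additional care in interpreting $A\mathcal{D}u$ via the extension $\tilde A\in\mathcal{L}(X,D(A^*)')$ (or equivalently via integration by parts picking up the boundary term $a(0)u(t)\partial_x\varphi_n(0)$) is a sound elaboration of a point the paper leaves implicit, but it does not change the argument.
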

\begin{proof}
Taking the inner product of the first equation of \eqref{controlled-eqs} with respect to $\varphi_n$, one can readily verify \eqref{mode-eq}.
For $Y_N(t)$, we have
\begin{equation*}
\begin{aligned}
    \dot{Y}_N(t)&=\left(\begin{array}{ccccc}
k_0 & k_1 &k_2& \cdots &k_N \\
a_1+k_0b_1 &q-\lambda_1+k_1b_1&k_2b_1& \cdots & k_Nb_1 \\
\vdots  & \vdots &\vdots& & \vdots\\
a_N+k_0b_N&k_1b_N&k_2b_N&\cdots&q-\lambda_N+k_Nb_N
\end{array}\right){Y}_N(t)+\begin{pmatrix}
0\\
 f_1(t)\\
\vdots \\
 f_N(t)
\end{pmatrix}\\[2mm]
&=(A_0+B_0K)Y_N(t)+R_N(t),\;\;\forall t\in (0,T).
\end{aligned}
\end{equation*}
Hence, \eqref{mode-eq-new} follows.
\end{proof}

\begin{lemma}
  The pair $ (A_0,B_0)$ satisfies the Kalman rank condition\footnotemark[3] \footnotetext[3]{ For the definition of the Kalman rank condition, we refer to \cite[Chapter 1]{Trélat}.}.
\end{lemma}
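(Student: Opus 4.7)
The plan is to verify the Kalman rank condition by computing the controllability matrix $\bigl[B_0,\,A_0 B_0,\,A_0^2 B_0,\,\ldots,\,A_0^N B_0\bigr]$ explicitly and showing its determinant is non-zero. Writing $A_0$ in the block form $\begin{pmatrix}0&0\\ a&D\end{pmatrix}$ with $a=(a_1,\ldots,a_N)^{\top}$ and $D=\mathrm{diag}(q-\lambda_1,\ldots,q-\lambda_N)$, a straightforward induction yields $A_0^{\,j} B_0 = \begin{pmatrix}0\\ D^{\,j-1}(a+Db)\end{pmatrix}$ for every $j\geq 1$. Expanding the determinant along the first row (whose only nonzero entry is the leading $1$ from $B_0$) reduces the full-rank question to the invertibility of the $N\times N$ matrix whose $i$-th row is $\alpha_i\,(1,\,q-\lambda_i,\,(q-\lambda_i)^2,\,\ldots,\,(q-\lambda_i)^{N-1})$, where
\[
\alpha_i := a_i + (q-\lambda_i)\,b_i.
\]
This matrix factors as $\mathrm{diag}(\alpha_1,\ldots,\alpha_N)$ times the Vandermonde matrix $\bigl((q-\lambda_i)^{j-1}\bigr)_{1\le i,j\le N}$. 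Since Lemma~\ref{A-property}(i) guarantees the $\lambda_i$ are pairwise distinct, the Vandermonde determinant is non-zero, and the verification reduces to proving $\alpha_i\neq 0$ for each $i=1,\ldots,N$.

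The heart of the argument is then the identity $\alpha_i = a(0)\,\varphi_i'(0)$. To obtain it, I would substitute the definitions with $\mathcal{D}(1)=1-x$ and apply Green's formula for the Sturm-Liouville operator to compute $\langle A(1-x),\varphi_i\rangle_{L^2}$. Because $\varphi_i\in D(A)$ vanishes at both endpoints while $1-x$ takes the value $1$ at $x=0$, the integration by parts produces a single boundary contribution,
\[
\langle A(1-x),\varphi_i\rangle_{L^2} \;=\; \lambda_i\,\langle 1-x,\varphi_i\rangle \;-\; a(0)\,\varphi_i'(0),
\]
and plugging this back into $a_i = q\langle 1-x,\varphi_i\rangle - \langle A(1-x),\varphi_i\rangle_{L^2}$ together with $b_i = -\langle 1-x,\varphi_i\rangle$ makes the $(q-\lambda_i)\langle 1-x,\varphi_i\rangle$ terms cancel exactly, leaving $\alpha_i = a(0)\,\varphi_i'(0)$.

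The only non-routine step — and the main (mild) obstacle — is certifying $\varphi_i'(0)\neq 0$. This follows from Cauchy uniqueness applied to the second-order linear ODE $(a\varphi')'+b\varphi = -\lambda_i\,\varphi$: if both $\varphi_i(0)=0$ and $\varphi_i'(0)=0$ held, then $\varphi_i\equiv 0$, contradicting $\|\varphi_i\|_{L^2(0,1)}=1$. Combined with the standing hypothesis $a(0)>0$, this gives $\alpha_i\neq 0$, and the Kalman rank condition follows at once from the Vandermonde factorization above.
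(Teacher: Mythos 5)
Your proposal is correct and follows essentially the same route as the paper: reduce to the determinant of the controllability matrix, factor it as $\prod_i\bigl(a_i+(q-\lambda_i)b_i\bigr)$ times a Vandermonde determinant in the distinct values $q-\lambda_i$, identify $a_i+(q-\lambda_i)b_i$ with $\pm a(0)\partial_x\varphi_i(0)$ by integration by parts (the sign is immaterial), and conclude from $a(0)>0$ and $\partial_x\varphi_i(0)\neq 0$. The only cosmetic difference is that you justify $\partial_x\varphi_i(0)\neq 0$ by Cauchy uniqueness for the second-order ODE, whereas the paper cites a unique continuation result from Tucsnak--Weiss; in this 1D Sturm--Liouville setting these are the same fact.
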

\begin{proof}
    It is equivalent to prove that $$\text{det}(B_0, A_0 B_0, \cdots, A_0^{N} B)\neq 0.$$
    Note that
    $$
   \text{det}(B_0, A_0 B_0, \cdots, A_0^{N} B)=\text{det}\left(\begin{array}{cccc}
1  &0& \cdots &0 \\
b_1 &a_1+(q-\lambda_1)b_1& \cdots & (q-\lambda_1)^{N-1}\{a_1+(q-\lambda_1)b_1\} \\
\vdots  & \vdots & & \vdots\\
b_N&a_N+(q-\lambda_N)b_N&\cdots&(q-\lambda_N)^{N-1}\{a_N+(q-\lambda_1)b_N\}
\end{array}\right),$$
which leads to
\begin{equation*}
\begin{aligned}
\text{det}(B_0, A_0 B_0, \cdots, A_0^{N} B)&=\prod_{j=1}^{N}\left(a_j+(q-\lambda_j )b_j\right)\text{VdM}(q-\lambda_1,\cdots,q-\lambda_N),
\end{aligned}
\end{equation*}
where $\text{VdM}(q-\lambda_1,\cdots,q-\lambda_N)$ denotes the Van der Monde determinant. 
For any \( 1 \leq j \leq N \), we have
\[
a_j + (q - \lambda_j) b_j = \lambda_j \langle \mathcal{D}(1), \varphi_j \rangle - \langle A \mathcal{D}(1), \varphi_j\rangle.
\]
Integrating by parts, it leads to
\[
a_j + (q- \lambda_j) b_j = -a(0)\partial_x\varphi_j(0).
\]
Hence,
$$
\text{det}(B_0, A_0 B_0, \cdots, A_0^{N} B)=(-a(0))^N(\prod_{j=1}^{N}\partial_x\varphi_j(0)) \text{VdM}(q-\lambda_1,\cdots,q-\lambda_N) .
$$
On one hand, $\text{VdM}(q-\lambda_1,\cdots,q-\lambda_N)\neq 0$, as the eigenvalues $\lambda_j$ are distinct. On the other hand, using the unique continuation property (\cite[Corollary 15.2.2]{TW}), it follows that $\partial_x\varphi_j(0)\neq 0$ for every $j\ge 1$. This completes the proof.
\end{proof}

\noindent \textbf{Selection of $N$}: Given any
$\delta>0$. Since $\lambda_n=\mathcal{O}(n^2)$, there exists $N\ge 2$ such that
\begin{equation}
    \label{N-select}
    q-\lambda_n<-2\delta\quad \text{for every }n>N.
\end{equation}

\noindent \textbf{Controller gain}: Since the pair $ (A_0,B_0)$ satisfies the Kalman rank condition, we can achieve pole-shifting (\cite[Theorem 17]{TW}). Therefore, there exists a matrix $K=[k_0,k_1,\cdots,k_N]\in \mathbb{R}^{1\times (N+1)}$ such that $A_0+B_0K$ is a Hurwitz matrix. In addition, the eigenvalues of $A_0+B_0K$ are less than $-2\delta$.

\subsection{Well-posedness}
    Now, we denote $Y(t)=(u(t),y_1(t),\cdots,y_N(t))\in\mathbb{R}^{1+N}$ and consider the PDE-ODE coupled system    
\begin{equation}
    \label{ode-pde}
    \left\{\begin{array}{ll}
 \dot{w}(t)+A w (t)=q w(t)+q\mathcal{D} u(t)-A \mathcal{D} u(t)-\mathcal{D} (KY(t))+f\left(\mathcal{D}u(t)+w(t)\right)&\text{ for } t\in \mathbb{R}^+,\\[2mm]
\dot{Y}(t)=(A_0+B_0K)Y(t)+R_N(t)&\text{ for } t\in \mathbb{R}^+,\\[2mm]
w(0 )=y_0\quad \text{and} \quad Y(0)=(0,\langle y_0,\varphi_1\rangle,\cdots,\langle y_0,\varphi_N\rangle).
\end{array}\right.
\end{equation}
It is a system with the state \((w, u, y_1,\cdots,y_N)\).
\begin{lemma}
    For each $y_0\in H^1_0(0,1)$, there exists $T=T\left(\|y_0\|_{H^1_0(0,1)}\right)$ such that the system \eqref{ode-pde} admits a unique classical solution $$
    w\in C([0,T);H^1_0(0,1))\cap C^1((0,T);L^2(0,1)),$$ $$Y\in C([0,T);\mathbb{R}^{1\times (N+1)})\cap C^1((0,T);\mathbb{R}^{N+1})$$ satisfying $$w(t)\in H^2(0,1)\cap H^1_0(0,1),\;\; \forall t\in(0,T).$$ Furthermore, if $T<+\infty$, then $\lim\limits_{t\to T^-}(|u(t)|+\|w(t)\|_{H^1_0(0,1)})=+\infty$.
\end{lemma}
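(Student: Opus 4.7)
The plan is to recast the coupled system \eqref{ode-pde} in mild form, apply the Banach contraction principle on a short time interval in an $H^1_0$-valued setting, and then promote the resulting mild solution to a classical one by invoking the analyticity of the $C_0$-semigroup generated by $-A$. Since $-A$ is analytic on $L^2(0,1)$ and a direct computation gives $A\mathcal{D}(1) = \partial_x a - b(x)(1-x) \in L^2(0,1)$, the $w$-equation can be written as
\[
w(t) = e^{-At}y_0 + \int_0^t e^{-A(t-s)}\bigl[\,qw(s) + q\mathcal{D}u(s) - A\mathcal{D}u(s) - \mathcal{D}(KY(s)) + f(\mathcal{D}u(s) + w(s))\,\bigr]\,ds,
\]
and the $Y$-equation reads $Y(t) = e^{(A_0+B_0K)t}Y(0) + \int_0^t e^{(A_0+B_0K)(t-s)}R_N(s)\,ds$, with $R_N$ depending on $w$ and on $u = Y_1$ through $f$.

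The fixed point would be set up in $X_T := C([0,T]; H^1_0(0,1)) \times C([0,T]; \mathbb{R}^{N+1})$ on a closed ball of radius $R$ around the linear evolution, with $R = R(\|y_0\|_{H^1_0(0,1)})$ chosen so that every iterate $(\tilde w, \tilde Y)$ lies in an $H^1$-ball on which assumption~(H)(i) yields a fixed Lipschitz constant for $f$. Self-mapping and contraction rest on two ingredients. First, the analytic smoothing estimate $\|A^{1/2}e^{-At}\|_{\mathcal{L}(L^2(0,1))}\lesssim t^{-1/2}$, combined with the norm equivalence $\|\cdot\|_{D(A^{1/2})}\sim \|\cdot\|_{H^1_0(0,1)}$ from Lemma~\ref{A-property}(iii), gives
\[
\Big\|\int_0^t e^{-A(t-s)}\psi(s)\,ds\Big\|_{H^1_0(0,1)} \lesssim T^{1/2}\sup_{s\in[0,T]}\|\psi(s)\|_{L^2(0,1)}.
\]
Second, (H)(i) controls the nonlinear increment by $\|f(\mathcal{D}\tilde u_1 + \tilde w_1) - f(\mathcal{D}\tilde u_2 + \tilde w_2)\|_{L^2(0,1)} \lesssim \|\tilde w_1 - \tilde w_2\|_{H^1(0,1)} + |\tilde u_1 - \tilde u_2|$, where the $\mathcal{D}$-term is handled using $\mathcal{D}\in\mathcal{L}(\mathbb{R}, H^1(0,1))$ (footnote~1). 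Shrinking $T = T(\|y_0\|_{H^1_0(0,1)})$ makes $T^{1/2}$ absorb the Lipschitz and lower-order coefficients, producing a unique fixed point $(w,Y)\in X_T$.

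To upgrade to a classical solution, I would note that $\Phi(\cdot,w(\cdot),Y(\cdot))$ is continuous from $[0,T)$ into $L^2(0,1)$ by (H)(i) and by linearity of the remaining terms, and in fact locally H\"older on $(0,T)$ thanks to the smoothing of the Duhamel integral; the abstract analytic-semigroup theory in \cite[Chapter 4]{Pazy} then yields $w\in C^1((0,T); L^2(0,1))$ together with $w(t)\in D(A) = H^2(0,1)\cap H^1_0(0,1)$ for $t\in(0,T)$. The $Y$-equation is immediately $C^1$ since $R_N\in C([0,T];\mathbb{R}^{N+1})$. For the blow-up alternative, let $T^\ast$ be the maximal existence time and suppose, for contradiction, that $\limsup_{t\to T^{\ast -}}(|u(t)| + \|w(t)\|_{H^1_0(0,1)}) < +\infty$. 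Along a sequence $t_n\uparrow T^\ast$ on which the state stays bounded, apply the local well-posedness to the initial data $(w(t_n), Y(t_n))$; since the lifespan depends only on the $H^1_0$-norm of the initial $w$ and the initial $|u|$, the resulting solutions exist on intervals of length uniformly bounded below, and gluing extends the solution past $T^\ast$, contradicting maximality.

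The main obstacle I expect is the two-sided dependence between the local Lipschitz constant of $f$ and the radius $R$ of the fixed-point ball: the constant in (H)(i) depends on $R$, so $R$ must be fixed first (in terms of $\|y_0\|_{H^1_0(0,1)}$), and only then can $T$ be chosen small enough that the resulting contraction rate is $<1$. A minor technical point is that $\mathcal{D}u\notin D(A)$ because $(\mathcal{D}u)(0)=u\neq 0$ in general, so $A\mathcal{D}u$ must be interpreted either through the $L^2$-representation computed above or through the extension $\tilde A\in\mathcal{L}(L^2(0,1), D(A^\ast)')$ already introduced in Section~\ref{sec2}; either way the forcing in the Duhamel formula sits in $L^2(0,1)$, which is what makes the $H^1_0$-valued fixed point argument go through.
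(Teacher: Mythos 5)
Your proposal is correct and follows essentially the same route as the paper: the paper recasts \eqref{ode-pde} as the abstract semilinear equation \eqref{wellpose} on $\mathcal{H}=L^2(0,1)\times\mathbb{R}^{N+1}$ with $-\mathfrak{A}$ generating an analytic semigroup, verifies via assumption (H)(i) that $F_1+F_2$ is locally Lipschitz from $\mathcal{H}^1$ into $\mathcal{H}$, and then invokes \cite[Theorems 6.3.1 and 6.3.3]{Pazy}, which is precisely the contraction-mapping, H\"older-regularity-upgrade and continuation machinery you reprove by hand. The only nitpick is that in the blow-up alternative the contradiction hypothesis should be the existence of a bounded sequence $t_n\uparrow T^{*}$ (finite $\liminf$) rather than finite $\limsup$, but the sequence-plus-uniform-lifespan argument you actually run already delivers the full limit statement.
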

\begin{proof}
   We consider the Hilbert spaces $\mathcal{H}$ and $\mathcal{H}^1$, where $\mathcal{H}=L^2(0,1) \times \mathbb{R}^{N+1}$ is endowed with the norm $\|\cdot\|_{\mathcal{H}}^2=\|\cdot\|_{L^2(0,1)}^2+|\cdot|_2^2$, and $\mathcal{H}^1=H^1_0(0,1) \times$ $\mathbb{R}^{N+1}$ is endowed with the norm $\|\cdot\|_{\mathcal{H}^1}^2=\|\cdot\|_{H^1_0(0,1)}^2+|\cdot|_2^2$. Define $\xi(\cdot)=\operatorname{col}\left\{w(\cdot), Y(\cdot)\right\}$. The closed-loop system \eqref{ode-pde} can be presented as
\begin{equation}
    \label{wellpose}
\dot{\xi}(t)+\mathfrak{A}  \xi(t)=F_1(\xi(t))+F_2(\xi(t)),\;\; \forall t>0,
\end{equation}
where
$$
\begin{aligned}
& \mathfrak{A} =\left[\begin{array}{cc}
A & 0 \\
0 & -(A_0+B_0K)
\end{array}\right], \\
& F_1(\xi(t))=\left[\begin{array}{c}
q w(t)+q\mathcal{D} u(t)-A \mathcal{D} u(t)-\mathcal{D} (KY(t)) \\
0
\end{array}\right], \\
& F_2(\xi(t))=\left[f\left(\mathcal{D}u(t)+w(t)\right), R_N(t)\right]^{\top}.
\end{aligned}
$$
The domain of $\mathfrak{A} $ is $\mathcal{D}(\mathfrak{A})=H^2(0,1)\cap H^1_0(0,1) \times \mathbb{R}^{N+1}$. Since $-A$ generates an analytic $C_0$-semigroup on $L^2(0,1)$ and $A_0+B_0K$ is a Hurwitz matrix, $-\mathfrak{A} $ also generates an analytic $C_0$-semigroup on $\mathcal{H}$. 

Fix $\xi_0 \in \mathcal{H}^1$. For $j=1,2$, we let $\xi_j \in B_{{\mathcal{H}^1}}\left(\xi_0, r\right)=\left\{\xi\in \mathcal{H}^1: \|\xi-{\xi}_0\|_{\mathcal{H}^1}\le r\right\}$. Write $\xi_1=\operatorname{col}\left\{w_1, Y_1\right\} $ and $ \xi_2=\operatorname{col}\left\{w_2, Y_2\right\}$. 

\begin{enumerate}
    \item[(i)] For the nonlinear term $F_1(\xi)$, it can be easily verified that
\begin{equation*}
\left\|F_1\left(\xi_1\right)-F_1\left(\xi_2\right)\right\|_{\mathcal{H}} \leq\left(|q|+|q|\|\mathcal{D}\|_{\mathcal{L}}+\|A\mathcal{D}\|_{\mathcal{L}}+\|\mathcal{D}\|_{\mathcal{L}}|K|_2\right)\left\|\xi_1-\xi_2\right\|_{\mathcal{H}^1}.
\end{equation*}

\item[(ii)] For the nonlinear term $F_2(\xi)$, we have
$$
\begin{aligned}
 \left\|F_2\left(\xi_1\right)-F_2\left(\xi_2\right)\right\|_{\mathcal{H}} & \leq 2\left\|f(\mathcal{D}u_1(t)+w_1(t))-f(\mathcal{D}u_2(t)+w_2(t))\right\|_{L^2(0,1)}\\
&\leq  \tilde{C}_1(\|\xi_0\|_{\mathcal{H}^1},r)\|\mathcal{D}u_1(t)+w_1(t)-\mathcal{D}u_2(t)-w_2(t)\|_{H^1_0(0,1)}\\
&\leq  \tilde{C}_2(\|\xi_0\|_{\mathcal{H}^1},r)\left\|\xi_1-\xi_2\right\|_{\mathcal{H}^1}.
\end{aligned}
$$
\end{enumerate}
Here $\tilde{C}_1\left(\|\xi_0\|_{\mathcal{H}^1},r\right)$ and $\tilde{C}_2\left(\|\xi_0\|_{\mathcal{H}^1},r\right)$ are positive constants depending on $\|\xi_0\|_{\mathcal{H}^1}$ and $r$. Since $H^1_0(0,1)= D(A^{\frac{1}{2}})$, the nonlinear term $F_1+F_2$ satisfies {Assumption (F)} in \cite[Theorem 6.3.1 and Theorem 6.3.3]{Pazy}. Therefore, the system \eqref{wellpose} admits a unique classical solution $\xi$ for some $T=T\left(\|y_0\|_{H^1_0(0,1)}\right)$, such that $\xi\in C([0,T);\mathcal{H}^1)\cap C^1((0,T);\mathcal{H})$ and $\xi(t)\in D(\mathfrak{A})$ for every $ t\in(0,T)$. Moreover, if $T<+\infty$, then $\lim\limits_{t\to T^-}(\|\xi(t)\|_{\mathcal{H}^1})=+\infty$. Thus, the proof is completed.
\end{proof}

 We have the following result.
\begin{theorem}[Well-posedness]
\label{Well-posedness-th}
   For each $y_0\in H^1_0(0,1)$, there exists $T=T\left(\|y_0\|_{H^1_0(0,1)}\right)$ such that the system \eqref{controlled-eqs} admits a unique solution $$w\in C([0,T);H^1_0(0,1))\cap C^1((0,T);L^2(0,1)),$$ $$u\in C([0,T))\cap C^1((0,T))$$ satisfying $$w(t)\in H^2(0,1)\cap H^1_0(0,1),\;\; \forall t\in(0,T).$$ Furthermore, if $T<+\infty$, then $\lim\limits_{t\to T^-}(|u(t)|+\|w(t)\|_{H^1_0(0,1)})=+\infty$.
\end{theorem}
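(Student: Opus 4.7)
The plan is to reduce the theorem to the previous lemma by identifying the solution of \eqref{controlled-eqs} with a suitable projection of the classical solution to the augmented PDE--ODE system \eqref{ode-pde}. The previous lemma already yields, for each $y_0\in H^1_0(0,1)$, a maximal time $T=T(\|y_0\|_{H^1_0(0,1)})$ and a unique classical solution $(w,Y)$ of \eqref{ode-pde} with the stated regularity on $[0,T)$. Since the first scalar component of $Y$ will play the role of $u$, and the $w$-equation in \eqref{controlled-eqs} coincides with the first equation of \eqref{ode-pde} once $Y$ is replaced by $Y_N$, what remains is to show that the ODE unknowns $y_1,\dots,y_N$ in \eqref{ode-pde} actually coincide with the modal projections $\langle w(\cdot),\varphi_n\rangle$ along the solution.

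The crucial step is thus the identity $y_n(t)=\langle w(t),\varphi_n\rangle$ for $1\le n\le N$ and $t\in[0,T)$. I would test the first equation of \eqref{ode-pde} against $\varphi_n$, which is legitimate because $w(t)\in H^2(0,1)\cap H^1_0(0,1)$ for $t>0$, and use the definitions $a_n=\langle q\mathcal{D}(1)-A\mathcal{D}(1),\varphi_n\rangle$ and $b_n=-\langle\mathcal{D}(1),\varphi_n\rangle$ to obtain, exactly as in the computation leading to Lemma \ref{lemma-mode},
\begin{equation*}
\frac{d}{dt}\langle w(t),\varphi_n\rangle=(q-\lambda_n)\langle w(t),\varphi_n\rangle+a_n u(t)+b_n KY(t)+f_n(t).
\end{equation*}
This is precisely the scalar equation satisfied by $y_n(t)$ in the ODE block of \eqref{ode-pde}. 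Setting $z_n(t):=\langle w(t),\varphi_n\rangle-y_n(t)$, the difference satisfies the autonomous scalar ODE $\dot z_n=(q-\lambda_n)z_n$ with $z_n(0)=\langle y_0,\varphi_n\rangle-\langle y_0,\varphi_n\rangle=0$, so $z_n\equiv 0$ on $[0,T)$. In particular $Y(t)=Y_N(t)$, and the pair $(w,u)$, where $u$ is the first scalar component of $Y$, is a classical solution of \eqref{controlled-eqs} with the required regularity.

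Uniqueness is transferred from \eqref{ode-pde}: any classical solution $(w,u)$ of \eqref{controlled-eqs} produces, via $y_n:=\langle w,\varphi_n\rangle$ and Lemma \ref{lemma-mode}, a classical solution of \eqref{ode-pde} with the prescribed initial data, so uniqueness for the larger system forces $(w,u)$ to coincide with the one just constructed. For the blow-up alternative, the previous lemma asserts $\|\xi(t)\|_{\mathcal{H}^1}\to\infty$ as $t\to T^-$ whenever $T<+\infty$. Since $|Y|_2^2=|u|^2+\sum_{n=1}^N\langle w,\varphi_n\rangle^2\le|u|^2+\|w\|_{L^2(0,1)}^2$, Parseval combined with the Poincar\'e inequality \eqref{po} yields $\|\xi\|_{\mathcal{H}^1}\lesssim|u|+\|w\|_{H^1_0(0,1)}$, while the converse bound is immediate; hence $|u(t)|+\|w(t)\|_{H^1_0(0,1)}\to+\infty$. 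I do not anticipate any real obstacle here: the statement is essentially a bookkeeping transfer from \eqref{ode-pde} to the original unknowns, and the only nontrivial point is the identification of modal components of $w$ with the ODE state, which is settled by a one-line uniqueness argument for a scalar linear equation.
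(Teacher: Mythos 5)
Your proposal is correct and follows essentially the same route as the paper: project the PDE component of \eqref{ode-pde} onto $\varphi_n$, observe that the difference $\langle w,\varphi_n\rangle-y_n$ solves $\dot z_n=(q-\lambda_n)z_n$ with zero initial data so that $Y\equiv Y_N$, and then transfer existence, uniqueness, and the blow-up alternative from \eqref{ode-pde} to \eqref{controlled-eqs}. Your explicit Parseval--Poincar\'e comparison of $\|\xi\|_{\mathcal{H}^1}$ with $|u|+\|w\|_{H^1_0(0,1)}$ in the blow-up step is a slightly more careful rendering of a point the paper leaves implicit, but it is not a different argument.
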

\begin{proof}
   We set $w_n(t)=\langle w(t),\varphi_n\rangle$, where $w(t)$ satisfies \eqref{ode-pde}. Firstly, it can be readily derived from the first equation in \eqref{ode-pde} that for every $ n\ge 1$, we have
    \begin{equation*}
        \dot{w}_n(t)+\lambda_nw_n(t)=qw_n(t)+a_nu(t)+b_nKY(t)+f_n(t) ,\;\; \forall t\in (0,T).
    \end{equation*}
    Additionally from the second equation in \eqref{ode-pde}, it follows that for every $ 1\le n\le N$, 
    \begin{equation*}
        \dot{y}_n(t)+\lambda_ny_n(t)=qy_n(t)+a_nu(t)+b_nKY(t)+f_n(t) ,\; \forall t\in (0,T).
    \end{equation*}
   Now, for any $ 1\le n\le N$, we denote $e_n(t)=w_n(t)-y_n(t)$,$\; t\in (0,T)$. It is not difficult to verify that
    \begin{equation*}
       \dot{e}_n(t)=(q-\lambda_n) e_n(t) ,\;\forall  t\in (0,T).
    \end{equation*}
This immediately indicates that $e_n(t)\equiv 0$, since the initial value $e_n(0)=0$.  It is hence deduced that for any $ 1\le n\le N$, we have $$ y_n(t) = \langle w(t),\varphi_n\rangle,\quad \forall t\in[0,T). $$ By replacing $y_n(t)$ with \( \langle w(t),\varphi_n\rangle \) in \eqref{ode-pde}, we conclude that the system \eqref{ode-pde} is equivalent to \eqref{controlled-eqs}, and thus the well-posedness of the system \eqref{controlled-eqs} follows from that of \eqref{ode-pde}.
\end{proof}
\subsection{Rapid stabilization}
Let $\tilde{N}\in \mathbb{N}$ satisfy $\tilde{N}> N$ and denote $Y_{\tilde{N}}(t)=(u(t),w_1(t) ,\cdots ,w_N(t),w_{{N+1}}(t),\cdots ,w_{\tilde{N}}(t))\in \mathbb{R}^{\tilde{N}+1}$. According to Lemma \ref{lemma-mode}, we have
\begin{equation*}
    \label{TITLEN}
    \dot{Y}_{\tilde{N}}(t)=\left(\begin{array}{cc}
A_0+B_0 K & 0 \\
A_1+B_1K & A_2
\end{array}\right)Y_{\tilde{N}}(t)+R_{\tilde{N}}(t),\;\; \forall t\in (0,T),
\end{equation*}
where $$A_1=\left(\begin{array}{cccc}
a_{N+1} & 0 & \cdots&0 \\
a_{N+2} & 0&\cdots& 0 \\
\vdots & \vdots  && \vdots \\
a_{\tilde{N}} & 0 & \cdots&0 
\end{array}\right)\in \mathbb{R}^{(\tilde{N}-N)\times(N+1)},B_1=\left(\begin{array}{c}
b_{N+1} \\
\vdots \\
b_{\tilde{N}}
\end{array}\right),R_{\tilde{N}}(t)=\left(\begin{array}{c}
0\\
f_1(t) \\
\vdots \\
f_{\tilde{N}}(t)
\end{array}\right),$$
 and $$A_2=\left(\begin{array}{cccc}
q-\lambda_{N+1} & 0 & \cdots&0 \\
0 & q-\lambda_{N+2}&\cdots& 0 \\
\vdots & \vdots  &&\vdots \\
0 & 0 & \cdots&q-\lambda_{\tilde{N}}
\end{array}\right)\in \mathbb{R}^{(\tilde{N}-N)\times(\tilde{N}-N)}.$$

Based on the prior selection of $N$ and $K$, both $A_0+B_0K+\delta I$ and $A_2+\delta I$ are Hurwitz matrices. Define $F:=\left(\begin{array}{cc}
A_0+B_0 K & 0 \\
A_1+B_1K & A_2
\end{array}\right)$. Then $F+\delta I$ is a Hurwitz matrix as well. Consequently, there exists a matrix $P\succ 0$ satisfying the Lyapunov equation
\begin{equation}
\label{ly-eq}
    F^TP+PF+2\delta P=-I.
\end{equation}
\begin{lemma}
    \label{eig-of-P}
  There are two constants $\sigma_1=\sigma_1\left(\delta\right)$ and $\sigma_2=\sigma_2\left(\tilde{N},\delta\right)$ so that
    \begin{equation}
        \label{spectrum-P}
        \|P\|_2=\sigma_{\max}(P)\le \sigma_1\quad \text{and}\quad  \sigma_{\min}(P)\ge \sigma_2.
    \end{equation}
\end{lemma}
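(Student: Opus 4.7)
The plan is to exploit the block-triangular form $F = \begin{pmatrix} A_0+B_0K & 0 \\ A_1+B_1K & A_2 \end{pmatrix}$ and split $P$ conformably as $P = \begin{pmatrix} P_{11} & P_{12} \\ P_{12}^T & P_{22} \end{pmatrix}$ with $P_{11}\in\mathbb{R}^{(N+1)\times(N+1)}$ and $P_{22}\in\mathbb{R}^{(\tilde N - N)\times(\tilde N - N)}$. Equation \eqref{ly-eq} then decouples into a Lyapunov equation for $P_{22}$ alone, a Sylvester equation for $P_{12}$ with forcing $-(A_1+B_1K)^T P_{22}$, and a Lyapunov equation for $P_{11}$ on the fixed-size space whose forcing involves $P_{12}$ and $A_1+B_1K$. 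I would bound each block separately and sum.

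For $P_{22}$, since $A_2=\mathrm{diag}(q-\lambda_n)_{n=N+1}^{\tilde N}$ with $\lambda_n-q>2\delta$, the diagonal equation gives $P_{22}=\mathrm{diag}\bigl(\tfrac{1}{2(\lambda_n-q-\delta)}\bigr)$ and hence $\|P_{22}\|_2\le \tfrac{1}{2\delta}$. For $P_{12}$, the Sylvester equation admits the integral representation
\[
P_{12}=\int_0^\infty e^{(A_0+B_0K+\delta I)^T t}(A_1+B_1K)^T P_{22}\,e^{(A_2+\delta I)t}\,dt.
\]
Although $\|A_1+B_1K\|_2$ blows up with $\tilde N$ (from the identity $a_n+(q-\lambda_n)b_n=-a(0)\partial_x\varphi_n(0)$ together with the asymptotics $\partial_x\varphi_n(0)=\mathcal{O}(\sqrt{\lambda_n})$ and $b_n=\mathcal{O}(1/\lambda_n)$, which yield $a_n=\mathcal{O}(\sqrt{\lambda_n})$), the diagonal weights of $P_{22}$ supply a decay $\mathcal{O}(1/\lambda_n)$ that absorbs this growth: the Frobenius norm of $(A_1+B_1K)^T P_{22}$ is controlled by
\[
\sum_{n>N}\frac{|a_n|^2}{(\lambda_n-q-\delta)^2}+|K|_2^2\sum_{n>N}\frac{|b_n|^2}{(\lambda_n-q-\delta)^2}\lesssim \sum 1/n^2<+\infty,
\]
which is uniform in $\tilde N$ and depends only on $\delta$. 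Combined with the exponential decay $\|e^{(A_0+B_0K+\delta I)t}\|_2\le C(\delta)e^{-\eta t}$ (since the eigenvalues of $A_0+B_0K$ lie strictly to the left of $-2\delta$ by construction of $K$) and $\|e^{(A_2+\delta I)t}\|_2\le e^{-\eta t}$, I obtain $\|P_{12}\|_2\le C(\delta)$. The same summation trick, noting that the entries $P_{12,i,n-N}$ decay like $\mathcal{O}(a_n/\lambda_n^2)$, yields $\|P_{12}(A_1+B_1K)\|_2\le C(\delta)$, so the Lyapunov equation for $P_{11}$ on the fixed-dimensional space $\mathbb{R}^{N+1}$ has a $\delta$-controlled right-hand side and standard theory gives $\|P_{11}\|_2\le C(\delta)$. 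Collecting the three block bounds yields $\sigma_{\max}(P)\le \sigma_1(\delta)$.

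For the lower bound I use the classical representation $P=\int_0^\infty e^{2\delta t}(e^{Ft})^T e^{Ft}\,dt$. Since $\|e^{-Ft}\|_2\le e^{\|F\|_2 t}$, one has the pointwise estimate $|e^{Ft}\xi|\ge e^{-\|F\|_2 t}|\xi|$, whence
\[
\xi^T P\xi\ge \int_0^\infty |e^{Ft}\xi|^2\,dt\ge \int_0^\infty e^{-2\|F\|_2 t}|\xi|^2\,dt=\frac{|\xi|^2}{2\|F\|_2}.
\]
Since $\|F\|_2\le C(\delta)\lambda_{\tilde N}$ (dominated by $\|A_2\|_2=\lambda_{\tilde N}-q$, with $\|A_1+B_1K\|_2$ only polynomial in $\tilde N$ and $\|A_0+B_0K\|_2=\mathcal{O}(1)$), this gives $\sigma_{\min}(P)\ge c(\delta)/\lambda_{\tilde N}=:\sigma_2(\tilde N,\delta)$, exhibiting the claimed $\tilde N$-dependence.

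The main obstacle is the uniform-in-$\tilde N$ bound on $\|P_{12}\|_2$: a naive Sylvester norm estimate would factor out $\|A_1+B_1K\|_2$, which is unbounded as $\tilde N\to\infty$. The resolution is to keep the diagonal weights of $P_{22}$ inside the integrand so that the growth $a_n\sim\sqrt{\lambda_n}$ is absorbed by the convergent $\ell^2$-tail $\sum |a_n|^2/\lambda_n^2<+\infty$ before any operator norm is taken.
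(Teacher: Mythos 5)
Your proof is essentially correct, but it takes a genuinely different route from the paper. You split $P$ into blocks conformal with the block-triangular $F$, solve the $(2,2)$ Lyapunov equation explicitly (diagonal $P_{22}$), bound $P_{12}$ through the Sylvester integral with the diagonal weights of $P_{22}$ kept inside the integrand, and then treat the fixed-dimensional $(1,1)$ equation; for the lower bound you use $P=\int_0^\infty e^{2\delta t}e^{F^{\top}t}e^{Ft}\,dt$ together with $|e^{Ft}\xi|\ge e^{-\|F\|_2 t}|\xi|$. The paper instead never decomposes $P$: it writes $F+\delta I=F_1+F_2$ with $F_1$ block diagonal and $F_2$ strictly block lower-triangular, observes that $(e^{F_1t_1}F_2)(e^{F_1t_2}F_2)=0$ so the variation-of-constants series terminates, obtains $\|e^{(F+\delta I)t}\|_2\le M_3(\delta)(1+t)e^{-\delta t}$, and integrates to bound $\sigma_{\max}(P)$; for $\sigma_{\min}(P)$ it uses the algebraic trick $\bigl(2(F+\delta I)^{\top}P+\tfrac12 I\bigr)^{\top}\bigl(2(F+\delta I)^{\top}P+\tfrac12 I\bigr)\succeq 0$ combined with the Lyapunov equation to get $\sigma_{\min}(P)\ge \frac{1}{4\|F+\delta I\|_2}$, which is the same $1/\lambda_{\tilde N}$-type dependence you obtain more directly. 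One caveat: your motivating premise that $\|A_1+B_1K\|_2$ blows up as $\tilde N\to\infty$ is false, and your asymptotics are off ($b_n=\mathcal{O}(\lambda_n^{-1/2})$, not $\mathcal{O}(\lambda_n^{-1})$, and in fact $(a_n)$ and $(b_n)$ are square-summable, being Fourier coefficients of the fixed $L^2$ functions $q\mathcal{D}(1)-A\mathcal{D}(1)$ and $-\mathcal{D}(1)$); this uniform bound $\|A_1+B_1K\|_2\le M_2(\delta)$ is precisely what lets the paper avoid your block bookkeeping. Your argument survives because it only uses the weaker fact that the weighted tails $\sum a_n^2/\lambda_n^2$ and $\sum b_n^2/\lambda_n^2$ converge, but the extra Sylvester machinery is unnecessary; what your approach buys is explicit blockwise information on $P$ (e.g., the exact $P_{22}$), while the paper's approach is shorter and makes the $\delta$-only dependence of $\sigma_1$ more transparent.
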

\begin{proof}
  (i) The proof here is motivated by \cite{KF}. Define two auxiliary matrices
$$F_1=\left(\begin{array}{cc}
A_0+B_0 K+\delta I & 0 \\
0& A_2+\delta I
\end{array}\right),\;\;\;F_2=\left(\begin{array}{cc}
0 & 0 \\
A_1+B_1 K&0
\end{array}\right).$$ 
It follows that $F+\delta I=F_1+F_2$.
Firstly, there is a constant $M_1=M_1\left(\delta\right)\ge 1$ so that
\begin{equation}
\label{F1}
   \|e ^{F_1t}\|_2\le \max\left\{M_1e^{-\delta t},e^{-\delta t}\right\}\le M_1e^{-\delta t},\;\; \forall t>0.
   \end{equation}
Besides, since $|K|_2$ depends only on $\delta$, we can find $M_2=M_2\left(\delta\right)\ge 1$ such that
\begin{equation}
\label{F_2}
    \|F_2\|_2=\|A_1+B_1K\|_2\le \|A_1\|_2+\|B_1K\|_2\le \sum_{n\ge 1}a_n^2+|K|_2+ |K|_2\sum_{n\ge 1}b_n^2<M_2.
\end{equation}
 It is easy to show that for any $t_i\ge 0$ $(i=1,2)$, we have $\prod_{i=1}^2(e^{F_1t_i}F_2)=0$. For each $t>0$, we introduce the identities
\begin{equation}
    \label{vanloan}
    e^{(F+\delta I)t}=e^{F_1t}+\int_{0}^te^{F_1(t-s)}F_2e^{(F+\delta I)s}ds=e^{F_1t}+\int_{0}^te^{F_1(t-s)}F_2e^{F_1s}ds.
\end{equation}
Finally, from \eqref{F1}, \eqref{F_2} and \eqref{vanloan}, we find $M_3=M_3\left(\delta\right)\ge 1$ such that
\begin{equation*}
   \| e^{(F+\delta I)t}\|_2\le M_3e^{-\delta t}(1+t),\;\; \forall t>0.
\end{equation*}
By virtue of tha fact that $P=\int_0^{\infty}e^{(F+\delta I)^{\top}s}e^{(F+\delta I)s}ds$, we deduce that
    \begin{equation*}
    \begin{aligned}
        \sigma_{\max}(P)=\|P\|_2\le \int_0^{+\infty}M_3^2(1+t)^2e^{-2\delta t}dt<\infty.
         \end{aligned}
    \end{equation*}
  We thus define $\sigma_1\left(\delta\right):=\int_0^{+\infty}M_3^2(1+t)^2e^{-2\delta t}dt$, which finalizes the first part of the proof.

 \noindent (ii) Obviously it holds that
   \begin{equation*}
       \left(2(F+\delta I)^{\top}P+\frac{1}{2}I\right)^{\top}\left(2(F+\delta I)^{\top}P+\frac{1}{2}I\right)\succeq 0.
   \end{equation*}
  Expanding this, we derive
   \begin{equation*}
      4P(F+\delta I)(F+\delta I)^{\top}P+P(F+\delta I)+(F+\delta I)^{\top}P+\frac{1}{4}I\succeq 0.
   \end{equation*}
   From the Lyapunov equation \eqref{ly-eq}, it follows that
   \begin{equation*}
       P(F+\delta I)(F+\delta I)^{\top}P\succeq\frac{1}{16}I,
   \end{equation*}
   and thus
   \begin{equation*}
       \sigma_{\min}\left(P(F+\delta I)(F+\delta I)^{\top}P\right)\ge \frac{1}{16}.
   \end{equation*}
   Note that $P(F+\delta I)(F+\delta I)^{\top}P$ is invertible, we have
   \begin{equation*}
       \sigma_{\max}\left((F+\delta I)(F+\delta I)^{\top}\right)\cdot \sigma_{\min}(P^2)\ge \sigma_{\min}\left((F+\delta I)(F+\delta I)^{\top}P^2\right)=\sigma_{\min}\left(P(F+\delta I)(F+\delta I)^{\top}P\right)\ge \frac{1}{16}.
   \end{equation*}
   It is readily verified that
   \begin{equation*}
       \begin{aligned}
           \sigma_{\min}(P)&\ge \frac{1}{4\sqrt{\sigma_{\max}\left((F+\delta I)(F+\delta I)^{\top}\right)}}=\frac{1}{4\|F+\delta I\|_2}\\
           &\ge \frac{1}{4\|F_1\|_2+\|F_2\|_2}\\
           &\ge \frac{1}{4\left(\max\left\{\|A_0+B_0K+\delta I\|_2,\lambda_{\tilde{N}}-q+\delta\right\}+M_2\right)}\\
           &:=\sigma_2\left(\tilde{N},\delta\right).
       \end{aligned}
   \end{equation*}
 This completes  the proof. 
\end{proof}

Define the Lyapunov function
\begin{equation*}
    \label{Lyapunov}
    \mathcal{V}(Y, w)= Y^{\top} P Y+\sum_{n=\tilde{N}+1}^{\infty} \lambda_n \langle w,\varphi_n\rangle^2, \quad \quad \forall (Y, w)\in \mathbb{R}^{\tilde{N}+1}\times H^2(0,1)\cap H^1_0(0,1),
\end{equation*}
where $\tilde{N}>N\ge 2$ will be chosen later. It is not hard to check that $V$ is well defined. {For every $u\in \mathbb{R}$ and $w\in H^2(0,1)\cap H^1_0(0,1)$, we introduce $Y_{\tilde{N}}=(u, \langle w,\varphi_1\rangle,\cdots,\langle w,\varphi_{\tilde{N}}\rangle$) and let $\tilde{\mathcal{V}}(u,w)=\mathcal{V}(Y_{\tilde{N}},w)$.}
\begin{lemma}
\label{h1norm-v}
  The above function $\tilde{\mathcal{V}}$ satisfies that
    \begin{equation}
        \label{VwH1}
       \tilde{\mathcal{V}}(u,w)\sim u^2+\|w\|^2_{H^1_0(0,1)},\quad \forall (u,w)\in \mathbb{R}\times H^2(0,1)\cap H^1_0(0,1).
    \end{equation}
\end{lemma}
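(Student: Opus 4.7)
The plan is to sandwich $\tilde{\mathcal{V}}(u,w)$ between constant multiples of the expression
\[
u^{2}+\sum_{n=1}^{\tilde{N}}\langle w,\varphi_{n}\rangle^{2}+\sum_{n=\tilde{N}+1}^{\infty}\lambda_{n}\langle w,\varphi_{n}\rangle^{2},
\]
and then separately argue that this expression is equivalent to $u^{2}+\|w\|_{H^1_0(0,1)}^{2}$. The first sandwiching is immediate from the positive definiteness of $P$: combined with the bounds in Lemma \ref{eig-of-P}, writing $|Y_{\tilde{N}}|_{2}^{2}=u^{2}+\sum_{n=1}^{\tilde{N}}\langle w,\varphi_{n}\rangle^{2}$ yields
\[
\sigma_{2}|Y_{\tilde{N}}|_{2}^{2}\;\le\; Y_{\tilde{N}}^{\top}PY_{\tilde{N}}\;\le\;\sigma_{1}|Y_{\tilde{N}}|_{2}^{2},
\]
so that $\tilde{\mathcal{V}}(u,w)$ differs from the above display only by positive multiplicative constants (depending on $\tilde{N}$ and $\delta$).

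The second step is where all the substance lies. By Lemma \ref{A-property}(iii), $\|w\|_{H^1_0(0,1)}^{2}\sim \|w\|_{D(A^{1/2})}^{2}=\sum_{n\ge 1}\lambda_{n}\langle w,\varphi_{n}\rangle^{2}$, so it suffices to show
\[
\sum_{n=1}^{\tilde{N}}\langle w,\varphi_{n}\rangle^{2}+\sum_{n=\tilde{N}+1}^{\infty}\lambda_{n}\langle w,\varphi_{n}\rangle^{2}\;\sim\;\sum_{n=1}^{\infty}\lambda_{n}\langle w,\varphi_{n}\rangle^{2}.
\]
For the upper bound one simply replaces each $\langle w,\varphi_{n}\rangle^{2}$ ($1\le n\le \tilde{N}$) by $\lambda_{n}^{-1}\lambda_{n}\langle w,\varphi_{n}\rangle^{2}$, using the fact that $\lambda_{n}\ge \lambda_{1}>0$. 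For the lower bound, one bounds the finite sum $\sum_{n=1}^{\tilde{N}}\lambda_{n}\langle w,\varphi_{n}\rangle^{2}\le \lambda_{\tilde{N}}\sum_{n=1}^{\tilde{N}}\langle w,\varphi_{n}\rangle^{2}$. In both directions the constants are explicit in $\lambda_{1},\lambda_{\tilde{N}}$, so they depend only on $\tilde{N}$ (equivalently on $\delta$).

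The one thing worth double-checking is that $\lambda_{1}>0$, since otherwise the argument above produces a bad constant. This follows from the hypothesis $\max_{x\in[0,1]}b(x)<0$: integrating by parts, $\langle Af,f\rangle=\int_{0}^{1}a|\partial_{x}f|^{2}-\int_{0}^{1}bf^{2}>0$ for every nonzero $f\in D(A)$, so all eigenvalues are strictly positive. With this in hand, chaining the two equivalences together gives constants $0<m(\delta,\tilde{N})\le M(\delta,\tilde{N})$ such that
\[
m(\delta,\tilde{N})\bigl(u^{2}+\|w\|_{H^1_0(0,1)}^{2}\bigr)\;\le\;\tilde{\mathcal{V}}(u,w)\;\le\;M(\delta,\tilde{N})\bigl(u^{2}+\|w\|_{H^1_0(0,1)}^{2}\bigr),
\]
which is the claim \eqref{VwH1}. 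No genuine difficulty is expected; the main technical care is in tracking which constants depend on $\tilde{N}$ (hence on $\delta$ after $\tilde{N}$ is fixed) and in invoking the three distinct ingredients in the right order: positivity of $P$, positivity of the spectrum of $A$, and the norm equivalence $D(A^{1/2})\simeq H^{1}_{0}(0,1)$.
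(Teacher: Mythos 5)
Your proof is correct and follows essentially the same route as the paper: bound the quadratic form $Y_{\tilde N}^{\top}PY_{\tilde N}$ by $\sigma_{\min}(P),\sigma_{\max}(P)$ (via Lemma \ref{eig-of-P}), absorb the low modes into the weighted sum using $\lambda_1$ and $\lambda_{\tilde N}$, and conclude with $\sum_{n\ge 1}\lambda_n\langle w,\varphi_n\rangle^2=\|w\|^2_{D(A^{1/2})}\sim\|w\|^2_{H^1_0(0,1)}$. The only difference is that you make explicit the check $\lambda_1>0$ (from $a>0$, $b<0$), which the paper leaves implicit.
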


\begin{proof}
   It can be straightforwardly verified that
    \begin{equation*}
        \sigma_{\min}(P)u^2+ \min\{\frac{\sigma_{\min}(P)}{\lambda_{\tilde{N}}},1\}\sum_{n=1}^\infty\lambda_n\langle w,\varphi_n\rangle^2\leq \tilde{\mathcal{V}}(u,w)\leq\sigma_{\max}(P)u^2+\max\{\frac{\sigma_{\max}(P)}{\lambda_1},1\}\sum_{n=1}^\infty\lambda_n\langle w,\varphi_n\rangle^2.
    \end{equation*}
We obtain \eqref{VwH1} immediately, since $\sum_{n=1}^\infty\lambda_n\langle w,\varphi_n\rangle^2=\langle A w, w\rangle=\|w\|^2_{D(A^{\frac{1}{2}})}\sim \|w\|^2_{H^1_0(0,1)}$.
\end{proof}
\begin{lemma}
        Let $V(t)=\tilde{\mathcal{V}}(u(t),w(t)),\;t\geq0$. Then  along the trajectory, it holds that
    \begin{equation}
        \label{V-along-traj}
        \begin{aligned}
    \dot{V}(t)+2\delta V(t)&=- Y_{\tilde{N}}^{\top}(t)Y_{\tilde{N}}(t)+2 Y_{\tilde{N}}^{\top}(t)P R_{\tilde{N}}(t)+2 \sum_{n\ge \tilde{N}+1}\lambda_{n} w_{n}(t) \{(q+\delta-\lambda_n )w_n(t)\\
&\quad\quad\quad\quad\quad\quad\quad\quad\quad\quad+a_nu(t)+b_n KY_{N}(t)+f_n(t)\},\;\;\forall t\in(0,T).
    \end{aligned}
    \end{equation}
\end{lemma}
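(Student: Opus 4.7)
The plan is to differentiate $V(t)$ term by term and then substitute the dynamics established earlier. Writing $V(t) = Y_{\tilde N}^\top(t) P Y_{\tilde N}(t) + \sum_{n \geq \tilde N+1} \lambda_n w_n^2(t)$, the formal computation gives
\begin{equation*}
\dot V(t) = 2 Y_{\tilde N}^\top(t) P \dot Y_{\tilde N}(t) + 2 \sum_{n \geq \tilde N+1} \lambda_n w_n(t) \dot w_n(t).
\end{equation*}
For the finite-dimensional piece I would plug in $\dot Y_{\tilde N} = F Y_{\tilde N} + R_{\tilde N}$ from the block decomposition displayed just before the lemma and use the symmetry trick $2 Y_{\tilde N}^\top P F Y_{\tilde N} = Y_{\tilde N}^\top (PF + F^\top P) Y_{\tilde N}$. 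Applying the Lyapunov equation \eqref{ly-eq} converts $PF + F^\top P$ into $-I - 2\delta P$, so this part contributes exactly $-Y_{\tilde N}^\top Y_{\tilde N} - 2\delta Y_{\tilde N}^\top P Y_{\tilde N} + 2 Y_{\tilde N}^\top P R_{\tilde N}$.

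For the tail sum I would invoke \eqref{mode-eq}, which gives $\dot w_n = (q - \lambda_n) w_n + a_n u + b_n K Y_N + f_n$ for every $n \geq 1$. Multiplying by $2 \lambda_n w_n$ and summing over $n \geq \tilde N+1$ produces the third expression in the claimed identity, except that the coefficient of $w_n^2$ is $(q - \lambda_n)$ rather than $(q + \delta - \lambda_n)$. To close the gap, I add and subtract $2 \delta V(t)$ on both sides: the $2\delta Y_{\tilde N}^\top P Y_{\tilde N}$ cancels the matching term from the Lyapunov substitution, while $2 \delta \sum_{n \geq \tilde N+1} \lambda_n w_n^2$ shifts $(q - \lambda_n)$ to $(q + \delta - \lambda_n)$. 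Assembling everything yields the target identity \eqref{V-along-traj}.

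The one point that needs care, and which I expect to be the main obstacle, is justifying the term-by-term differentiation of the infinite series $\sum_{n \geq \tilde N+1} \lambda_n w_n^2(t)$. By Theorem \ref{Well-posedness-th} we have $w \in C^1((0,T); L^2(0,1))$ with $w(t) \in D(A) = H^2(0,1) \cap H^1_0(0,1)$ for $t \in (0,T)$, which ensures that $\sum \lambda_n^2 w_n^2(t) = \|Aw(t)\|_{L^2}^2$ and $\sum \lambda_n w_n(t) \dot w_n(t)$ converge absolutely and locally uniformly on compact subsets of $(0,T)$ via Cauchy--Schwarz. This regularity is enough to commute differentiation and summation and to evaluate $\langle Aw, \dot w \rangle$ as the corresponding series. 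After this justification, the rest is the purely algebraic manipulation described above.
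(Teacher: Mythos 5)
Your proposal is correct and follows essentially the same route as the paper: differentiate $V$, substitute $\dot{Y}_{\tilde N}=FY_{\tilde N}+R_{\tilde N}$ and the mode equations \eqref{mode-eq}, and use the Lyapunov equation \eqref{ly-eq} to absorb the $2\delta$ terms; the paper merely packages the tail series as $\langle Aw(t),\dot w(t)\rangle$ minus the finite head rather than differentiating it term by term, which is the same technical point you address with your convergence justification via $w(t)\in D(A)$ and Cauchy--Schwarz.
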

\begin{proof}
    Since $ Y_{\tilde{N}} \in C^{1}((0, T) ; \mathbb{R}^{\tilde{N}+1}) $ and  $w \in C^{1}((0, T) ; L^{2}(0,1))$, it follows that
\begin{equation*}
\begin{aligned}
\dot{V}(t)&= \dot{Y}_{\tilde{N}}^{\top}(t) P Y_{\tilde{N}}(t)+ Y_{\tilde{N}}^{\top}(t) P \dot{Y}_{\tilde{N}}(t)+2\langle A w(t), \dot{w}(t)\rangle-2 \sum_{n=1}^{\tilde{N}} \lambda_{n} w_{n}(t) \dot{w}_{n}(t)\\[2mm]
&= \dot{Y}_{\tilde{N}}^{\top}(t) P Y_{\tilde{N}}(t)+ Y_{\tilde{N}}^{\top}(t) P \dot{Y}_{\tilde{N}}(t)+2 \sum_{n\ge 1}\lambda_{n} w_{n}(t) \dot{w}_{n}(t)-2 \sum_{n=1}^{\tilde{N}} \lambda_{n} w_{n}(t) \dot{w}_{n}(t)\\[2mm]
&= Y_{\tilde{N}}^{\top}(t)(F^T P+PF)Y_{\tilde{N}}(t)+2 Y_{\tilde{N}}^{\top}(t)P R_{\tilde{N}}(t)+2 \sum_{n\ge \tilde{N}+1}\lambda_{n} w_{n}(t) \dot{w}_{n}(t)\\[2mm]
&= Y_{\tilde{N}}^{\top}(t)(F^T P+PF)Y_{\tilde{N}}(t)+2 Y_{\tilde{N}}^{\top}(t)P R_{\tilde{N}}(t)+2 \sum_{n\ge \tilde{N}+1}\lambda_{n} w_{n}(t) \{(q-\lambda_n )w_n(t)\\
&\quad\quad\quad\quad\quad\quad\quad\quad\quad\quad\quad\quad\quad\quad+a_nu(t)+b_n KY_{N}(t)+f_n(t)\},\;\;\forall t\in(0,T).
\end{aligned}
\end{equation*}
Using the property of the Lyapunov equation \eqref{ly-eq}, \eqref{V-along-traj} follows.
\end{proof}

\begin{lemma}\label{Tilde-N-Select}
    Let $S_{1,\tilde{N}}=\sum\limits_{n\ge \tilde{N}+1}a_n^2$ and $S_{2,\tilde{N}}=\sum\limits_{n\ge \tilde{N}+1}b_n^2|K|_2$. Then there exists $\tilde{N}>N$ such that 
\begin{equation}
\label{TILDE-N-Select}
\left\{\begin{aligned}
    &(-1+ 2S_{1,\tilde{N}}+2S_{2,\tilde{N}}+\frac{1}{\tilde{N}}) I +\frac{P^TP}{\tilde{N}}\preceq 0,\\[2mm]
   & 2q+2\delta-\lambda_n+\frac{1}{\tilde{N}}\lambda_n+\frac{1}{\tilde{N}}\le 0,\;\;\forall n\ge \tilde{N}+1.
    \end{aligned}\right.
\end{equation}
\end{lemma}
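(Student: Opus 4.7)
The plan is to take $\tilde{N}\to\infty$ in both inequalities of \eqref{TILDE-N-Select} and to exploit the uniform-in-$\tilde{N}$ bound $\|P\|_2\le\sigma_1(\delta)$ furnished by Lemma \ref{eig-of-P}. Although the Lyapunov matrix $P$ itself depends on $\tilde{N}$ through $F$, its spectral norm is controlled by a constant depending only on $\delta$; this is the subtle but crucial point, since without such uniformity no limiting argument in the matrix inequality would go through.

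Next I would verify $S_{1,\tilde{N}}\to 0$ and $S_{2,\tilde{N}}\to 0$. Both are tails of $\ell^2$ sequences: $\{b_n\}$ consists of Fourier coefficients of $\mathcal{D}(1)=1-x\in L^2(0,1)$, while a short distributional computation gives $A\mathcal{D}(1)=\partial_x a-b(1-x)\in L^2(0,1)$, so $\{a_n\}$ consists of Fourier coefficients of the $L^2$-function $q\mathcal{D}(1)-A\mathcal{D}(1)$. Hence $\sum_{n\ge 1}a_n^2<\infty$ and $\sum_{n\ge 1}b_n^2<\infty$, so the tails $S_{1,\tilde{N}}$ and $S_{2,\tilde{N}}$ both vanish as $\tilde{N}\to\infty$.

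For the first condition in \eqref{TILDE-N-Select}, the left-hand side is a scalar multiple of $I$ plus the positive semidefinite matrix $P^TP/\tilde{N}$, whose largest eigenvalue is at most $\|P\|_2^2/\tilde{N}\le \sigma_1^2/\tilde{N}$. Therefore it suffices to verify
\begin{equation*}
-1+2S_{1,\tilde{N}}+2S_{2,\tilde{N}}+\frac{1+\sigma_1^2}{\tilde{N}}\le 0,
\end{equation*}
which holds for all large $\tilde{N}$ since the left-hand side tends to $-1$. For the second condition, rewrite it as $(1-1/\tilde{N})\lambda_n\ge 2q+2\delta+1/\tilde{N}$; since $\{\lambda_n\}$ is increasing and $\lambda_n=\mathcal{O}(n^2)\to\infty$, it suffices to check the case $n=\tilde{N}+1$, which clearly holds for all large $\tilde{N}$. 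Choosing $\tilde{N}$ large enough to realize both inequalities simultaneously finishes the proof; the only real obstacle is the uniformity of $\|P\|_2$ that is already packaged in Lemma \ref{eig-of-P}.
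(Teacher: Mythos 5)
Your proposal is correct and follows essentially the same route as the paper: invoke the $\tilde{N}$-uniform bound $\|P\|_2\le\sigma_1(\delta)$ from Lemma \ref{eig-of-P}, let the tails $S_{1,\tilde{N}},S_{2,\tilde{N}}$ of the convergent series $\sum a_n^2,\sum b_n^2$ vanish, and use $\lambda_n\to\infty$ (with $\tilde{N}\ge 2$) to absorb the $\frac{1}{\tilde{N}}\lambda_n+\frac{1}{\tilde{N}}$ terms. You merely spell out details the paper leaves implicit, such as why $a_n,b_n\in\ell^2$ and the explicit bound $\sigma_{\max}(P^TP/\tilde{N})\le\sigma_1^2/\tilde{N}$.
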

\begin{proof}
    \noindent On one hand,
in view of the fact that $\|P\|_2= \mathcal{O}(1)$ uniformly in $\tilde{N}$, there exists $N_1> N$ such that 
\begin{equation*}
    \sigma_{\max} ((-1+ {2}S_{1,\tilde{N}}+{2}S_{2,\tilde{N}}+\frac{1}{\tilde{N}}) I +\frac{1}{\tilde{N}}P^TP)\le 0 \quad \text{if $\tilde{N}\ge N_1 $}.
\end{equation*}
On the one hand, since $\lambda_n=\mathcal{O}(n^2)$, there is $N_2> N$ so that
\begin{equation*}
    2q+2\delta+1-\frac{1}{2}\lambda_n\le 0\quad \text{for every }n\ge N_2.
\end{equation*}
We let $\hat{N}=\max\{N_1,N_2\}$, it is not hard to check that for every $\tilde{N}>\hat{N}$, \eqref{TILDE-N-Select} holds true.
\end{proof}
\begin{theorem}
\label{th2}
   Assume that $f$ satisfies the assumption (H). Given any $\delta>0$, there exist constants $\rho=\rho\left(\delta\right)$ and $\hat{M}=\hat{M}\left(\delta\right)\ge 1$ such that for any initial data $y_0\in H^1_0(0,1)$ satisfying $$\| y_0\|_{ H^1_0(0,1)}\le \rho,$$
the solution $(u,w)$ of the system \eqref{controlled-eqs} verifies 
\begin{equation}
    \label{ex-sta33}
    \begin{aligned}
               |u(t)|+ \|w(t)\|_{H^1_0(0,1)}\le \hat{M}e^{-\delta t}\|y_0\|_{H^1_0(0,1)},\;\;\forall t>0.
    \end{aligned}
\end{equation}
\end{theorem}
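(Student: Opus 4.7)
The plan is to carry out a Lyapunov analysis based on $V(t):=\tilde{\mathcal{V}}(u(t),w(t))$, using the equivalence $V(t)\sim u(t)^2+\|w(t)\|_{H^1_0(0,1)}^2$ from Lemma \ref{h1norm-v}, and to derive from the identity \eqref{V-along-traj} a differential inequality of the form
\[
\dot V(t)+2\delta V(t)\le \eta\bigl(\sqrt{V(t)}\bigr)\,V(t),
\]
where $\eta(r)\to 0$ as $r\to 0^+$. Combined with the local well-posedness in Theorem \ref{Well-posedness-th} and its continuation criterion, a standard bootstrap argument will then yield global existence and the exponential decay \eqref{ex-sta33}.

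For the $f$-free part of the right-hand side of \eqref{V-along-traj}, my strategy is to dispose of the high-frequency cross terms $2\lambda_n w_n a_n u$ and $2\lambda_n w_n b_n KY_N$ by Young's inequality, with parameters chosen so that the $\lambda_n^2 w_n^2$ pieces they produce are absorbed by the dissipative slack $-2\lambda_n^2 w_n^2$ coming from $2\lambda_n(q+\delta-\lambda_n)w_n^2$; the scalar inequality in \eqref{TILDE-N-Select} is exactly tailored to ensure that what remains at the $\lambda_n w_n^2$ level is nonpositive. Simultaneously, I would split $2Y_{\tilde N}^{\top}PR_{\tilde N}\le \tfrac{1}{\tilde N}Y_{\tilde N}^{\top}P^{\top}PY_{\tilde N}+\tilde N|R_{\tilde N}|_2^{2}$ and collect all the $Y_{\tilde N}$-quadratic terms (from this split together with the $u^2$ and $|KY_N|^2$ residues of the Young estimates, which carry the tails $S_{1,\tilde N},S_{2,\tilde N}$). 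The matrix inequality in \eqref{TILDE-N-Select} then makes the assembled quadratic form nonpositive, leaving only nonlinear residuals.

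To control those nonlinear residuals, I would invoke assumption (H)(ii) together with Agmon's inequality \eqref{Agmon}, yielding $\|f(y)\|_{L^2(0,1)}\le c\bigl(\tilde c_2\|y\|_{H^1(0,1)}\bigr)\,\|y\|_{H^1(0,1)}$. Since $y=w+\mathcal{D}u$ with $\mathcal{D}\in\mathcal{L}(\mathbb{R},H^1(0,1))$, we have $\|y\|_{H^1}^{2}\lesssim u^2+\|w\|_{H^1_0}^{2}\sim V(t)$. Hence the $\tilde N|R_{\tilde N}|_2^{2}$ piece is bounded by $\tilde N\,c^{2}(C_1\sqrt V)\,V$, while the tail sum $2\sum_{n\ge \tilde N+1}\lambda_n w_n f_n$ is handled by one more Young's splitting against the leftover slice of $-2\lambda_n^2 w_n^2$ and delivers at most a constant times $\|f(y)\|_{L^2}^{2}\le c^{2}(C_1\sqrt V)\,V$. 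Summing produces the announced estimate $\dot V+2\delta V\le C_*\,c^{2}\bigl(C_1\sqrt V\bigr)\,V$, so that $\eta(r)=C_*\,c^{2}(C_1 r)$.

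Finally, since $c$ is continuous with $c(0)=0$, I would choose $\rho>0$ small enough that $V(0)\le C_V\rho^2$ and $\eta(\sqrt{V(0)})$ is strictly smaller than a prescribed positive margin; carrying out the feedback design with a target rate slightly larger than $2\delta$ (say $2(\delta+\eta_0)$) reserves exactly that margin in the Lyapunov bound. Then, as long as $V(t)\le V(0)$, we have $\dot V+2\delta V\le 0$, and a bootstrap based on the continuation criterion of Theorem \ref{Well-posedness-th} propagates this smallness for all $t\ge 0$. The resulting $V(t)\le V(0)e^{-2\delta t}$, translated back through Lemma \ref{h1norm-v}, yields \eqref{ex-sta33}. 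The principal obstacle I foresee is the careful calibration of Young-inequality parameters in the linear step so that every $\lambda_n^2 w_n^2$ term produced is precisely matched against the available dissipation while the $Y_{\tilde N}$-quadratic residuals fit cleanly into the matrix inequality of \eqref{TILDE-N-Select}.
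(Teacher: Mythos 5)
Your proposal is correct and follows essentially the same route as the paper: the same Lyapunov function $V=\tilde{\mathcal{V}}(u,w)$, the same Young splittings of the cross terms (with the choice $\alpha_1=\alpha_2=2$, $\alpha_3=\tilde N$ implicit in your $\tfrac1{\tilde N}Y^{\top}P^{\top}PY+\tilde N|R_{\tilde N}|_2^2$ split), the same use of Lemma \ref{Tilde-N-Select}, assumption (H)(ii) combined with Agmon's inequality to bound the nonlinear residual, and a continuity/bootstrap argument plus the continuation criterion of Theorem \ref{Well-posedness-th}. The only cosmetic difference is where the margin is stored: you reserve it by designing at a slightly larger rate $\delta+\eta_0$ and running the bootstrap on $V(t)\le V(0)$, whereas the paper keeps the state-dependent coefficient $c^2(\|\mathcal{D}u+w\|_{L^\infty})$ inside the LMIs, uses the $1/\tilde N$ slack of \eqref{TILDE-N-Select}, and runs the contradiction argument on the threshold $\sigma$ for the weighted $H^1$-norm — the two devices are interchangeable.
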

\begin{proof}
The proof of {Theorem \ref{th2}} is divided into several steps. It should be noted that the system \eqref{controlled-eqs} is well-posed over $[0,T)$ for some $T=T\left(\| y_0\|_{ H^1_0(0,1)}\right)$ according to {Theorem \ref{Well-posedness-th}}.

\noindent \textbf{Step 1.} We derive a set of linear matrix inequalities (LMIs) that ensure the exponential stability of $V(t)$ over $[0,T)$.

By using Young’s inequality, for every positive number $\alpha_1,\alpha_2$ and $\alpha_3$, we have
\begin{equation*}
    2 \sum_{n\ge \tilde{N}+1}\lambda_{n}a_n w_{n}(t)u(t)\le  \sum_{n\ge \tilde{N}+1}\{\alpha_1a_n^2u(t)^2+\frac{1}{\alpha_1}\lambda_{n}^2w_{n}^2(t)\},\;\;\forall t\in (0,T),
\end{equation*}
\begin{equation*}
    2 \sum_{n\ge \tilde{N}+1}\lambda_{n}b_n w_{n}(t)KY_{N}(t)\le  \sum_{n\ge \tilde{N}+1}\{\alpha_2b_n^2|K|_2^2Y_{N}^{\top}(t)Y_{N}(t)+\frac{1}{\alpha_2}\lambda_{n}^2w_{n}^2(t)\},\;\;\forall t\in (0,T),
\end{equation*}
and
\begin{equation*}
\label{f-est}
\begin{aligned}
    2 \sum_{n\ge \tilde{N}+1}\lambda_{n}w_{n}(t)f_n(t)&\le  \sum_{n\ge \tilde{N}+1}\{\alpha_3f_n^2(t)+\frac{1}{\alpha_3}\lambda_{n}^2w_{n}^2(t)\}\\[2mm]
    &=\frac{1}{\alpha_3} \sum_{n\ge \tilde{N}+1}\lambda_{n}^2w_{n}^2(t)+\alpha_3\|f(\mathcal{D}u(t)+w(t))\|^2_{L^2(0,1)}-\alpha_3R_{\tilde{N}}^{\top}(t)R_{\tilde{N}}(t),\;\;\forall t\in (0,T).
    \end{aligned}
\end{equation*}

Then employing the assumption (H) on $f$, we derive that
\begin{equation*}
\begin{aligned}
\|f(\mathcal{D}u(t)+w(t))\|^2_{L^2(0,1)}&\le c^2(\|\mathcal{D}u(t)+w(t)\|_{L^{\infty}(0,1)})\|\mathcal{D}u(t)+w(t)\|^2_{H^{1}(0,1)},\;\;\forall t\in (0,T).
 \end{aligned}
 \end{equation*}
For $C_0=2\max\left\{k_2^2,\|\mathcal{D}\|^2_{\mathcal{L}}\right\}$, we have
\begin{equation*}
\begin{aligned}
   2 \sum_{n\ge \tilde{N}+1}\lambda_{n}w_{n}(t)f_n(t)&\le\frac{1}{\alpha_3} \sum_{n\ge \tilde{N}+1}\lambda_{n}^2w_{n}^2(t)+\alpha_3 c^2(\|\mathcal{D}u(t)+w(t)\|_{L^{\infty}(0,1)})\|\mathcal{D}u(t)+w(t)\|^2_{H^{1}(0,1)}\\
&\quad\quad\quad\quad\quad\quad\quad\quad\quad\quad\quad\quad\quad\quad\quad\quad\quad\quad\quad\quad\quad\quad\quad-\alpha_3R_{\tilde{N}}^{\top}(t)R_{\tilde{N}}(t)\\
   &\le \frac{1}{\alpha_3} \sum_{n\ge \tilde{N}+1}\lambda_{n}^2w_{n}^2(t)+\alpha_3C_0 c^2(\|\mathcal{D}u(t)+w(t)\|_{L^{\infty}(0,1)})\{u^2(t)+\sum_{n\ge 1}\lambda_n w_n^2(t)\}\\
&\quad\quad\quad\quad\quad\quad\quad\quad\quad\quad\quad\quad\quad\quad\quad\quad\quad\quad\quad\quad\quad\quad\quad-\alpha_3R_{\tilde{N}}^{\top}(t)R_{\tilde{N}}(t)\\
   &\le \frac{1}{\alpha_3}\sum\limits_{n\ge \tilde{N}+1}\lambda_{n}^2w_{n}^2(t) +\alpha_3C_0 c^2(\|\mathcal{D}u(t)+w(t)\|_{L^{\infty}(0,1)})\{u^2(t)+\sum_{n\ge \tilde{N}+1}\lambda_n w_n^2(t)\\
   &\quad\quad\quad\quad\quad\quad\quad\quad\quad+\lambda_{\tilde{N}}Y_{\tilde{N}}^{\top}(t)Y_{\tilde{N}}(t)\}-\alpha_3R_{\tilde{N}}^{\top}(t)R_{\tilde{N}}(t),\;\;\forall t\in (0,T).
    \end{aligned}
\end{equation*}

Hence, we obtain that
\begin{equation*}
    \dot{V}(t)+2\delta V(t)\le Y_{\tilde{N}}^{\top}(t)\Phi Y_{\tilde{N}}(t)+2 Y_{\tilde{N}}^{\top}(t)P R_{\tilde{N}}(t)-\alpha_3R_{\tilde{N}}^{\top}(t)R_{\tilde{N}}(t)+\sum_{n\ge \tilde{N}+1}\lambda_{n} \Psi_n w_{n}^2(t),\;\;\forall t\in (0,T),
\end{equation*}
where 
\begin{equation*}
    \begin{aligned}
    &\Phi=\{-1+ \alpha_1S_{1,\tilde{N}}+\alpha_2S_{2,\tilde{N}}+\alpha_3C_0 c^2(\|\mathcal{D}u(t)+w(t)\|_{L^{\infty}(0,1)})(1+\lambda_{\tilde{N}})\} I ,\\[2mm]
&\Psi_n=2q+2\delta-2\lambda_n+\frac{1}{\alpha_1}\lambda_n+\frac{1}{\alpha_2}\lambda_n+\frac{1}{\alpha_3}\lambda_n+\alpha_3C_0 c^2(\|\mathcal{D}u(t)+w(t)\|_{L^{\infty}(0,1)}),\;\;\forall n\ge \tilde{N}+1.
    \end{aligned}
\end{equation*}
It is equivalent to
\begin{equation*}
    \label{LMIs}
    \dot{V}(t)+2\delta V(t)\le \begin{pmatrix}
 Y_{\tilde{N}}(t)\\
R_{\tilde{N}}(t)
\end{pmatrix}^T\Theta\begin{pmatrix}
 Y_{\tilde{N}}(t)\\
R_{\tilde{N}}(t)
\end{pmatrix}+\sum_{n\ge \tilde{N}+1}\lambda_{n} \Psi_n w_{n}^2(t),\;\;\forall t\in (0,T),
\end{equation*}
where 
\begin{equation*}
    \label{Theta}
    \Theta:=\begin{pmatrix}
 \Phi &  P\\
 P  & -\alpha_3
\end{pmatrix}.
\end{equation*}

If the LMIs $\Theta\preceq 0$ and $\Psi_n\le 0 \text{ for every }n\ge \tilde{N}+1$, then $\dot{V}(t)+2\delta V(t)\le 0$ over $(0,T)$. Therefore, we derive a verification condition to ensure the exponential stability of $V(t)$, i.e.,
\begin{equation}
\label{theta and psi<0}
\left\{\begin{aligned}
    &\Theta\preceq 0 \text{:  }\{-1+ \alpha_1S_{1,\tilde{N}}+\alpha_2S_{2,\tilde{N}}+\alpha_3C_0 c^2(\|\mathcal{D}u(t)+w(t)\|_{L^{\infty}(0,1)})(1+\lambda_{\tilde{N}})\} I +\frac{P^TP}{\alpha_3}\preceq 0,\\[2mm]
   & \Psi_n\le 0 \text{:  }  2q+2\delta-2\lambda_n+\frac{1}{\alpha_1}\lambda_n+\frac{1}{\alpha_2}\lambda_n+\frac{1}{\alpha_3}\lambda_n+\alpha_3C_0 c^2(\|\mathcal{D}u(t)+w(t)\|_{L^{\infty}(0,1)})\le 0.
    \end{aligned}\right.
\end{equation}

\noindent \textbf{Step 2.} We prove that if $\|y_0\|_{H^1_0(0,1)}$ is small enough, then the above condition \eqref{theta and psi<0} is satisfied for some $\tilde{N}$ and $\alpha_i$, $i=1,2,3$.

  Let $\alpha_1=\alpha_2=2$ and $\alpha_3=\tilde{N}$. It is equivalent to prove the following holds 
\begin{equation}
\label{equivalent-LMIs}
\left\{\begin{aligned}
    &\{-1+ 2S_{1,\tilde{N}}+2S_{2,\tilde{N}}+\tilde{N}(1+\lambda_{\tilde{N}}) C_0c^2(\|\mathcal{D}u(t)+w(t)\|_{L^{\infty}(0,1)})\} I +\frac{P^TP}{\tilde{N}}\preceq 0,\\[2mm]
   & 2q+2\delta-\lambda_n+\frac{1}{\tilde{N}}\lambda_n+\tilde{N}C_0 c^2(\|\mathcal{D}u(t)+w(t)\|_{L^{\infty}(0,1)})\le 0,\;\;\forall n\ge \tilde{N}+1.
    \end{aligned}\right.
\end{equation}
Firstly, using the properties of $c(\cdot)$, it is not hard to find a constant $\sigma=\sigma(\tilde{N})$ such that
\begin{equation}\label{sigma}
   \tilde{N}(1+\lambda_{\tilde{N}}) C_0 c^2(\sigma)<\frac{1}{\tilde{N}}.
\end{equation}
Secondly, we show that if $y_0\in H^1_0(0,1)$ satisfies 
\begin{equation}
\label{rho}
    \|y_0\|_{H^1_0(0,1)}\le\rho:=\frac{\sigma}{\tilde{c}_2^2\|\mathcal{D}\|_{\mathcal{L}}\hat{M}},
\end{equation}
then it holds that
\begin{equation*}
\label{bound}
   \tilde{c}_2\|w(t)\|_{H^1_0(0,1)}+\tilde{c}_2\|\mathcal{D}\|_{\mathcal{L}}|u(t)| <{\sigma} ,\;\; \forall t\in [0,T).
  \end{equation*}
By contradiction, i.e., we suppose that there exists $\tilde{t}\in [0,T)$ such that $   \tilde{c}_2\|\mathcal{D}\|_{\mathcal{L}}|u(\tilde{t})|+   \tilde{c}_2\|w(\tilde{t})\|_{H^1_0(0,1)}\ge \sigma$. When $t=0$, we have
    \begin{equation*}
           \tilde{c}_2\|w(0)\|_{H^1_0(0,1)}+   \tilde{c}_2\|\mathcal{D}\|_{\mathcal{L}}|u(0)|\le \frac{{\sigma}}{   \tilde{c}_2\|\mathcal{D}\|_{\mathcal{L}}\hat{M}}< {\sigma}.
    \end{equation*}
    Thus, we can define $0<t^*\le \tilde{t}$ the shortest time that satisfies
  \begin{equation*}
        \tilde{c}_2\|w(t)\|_{H^1_0(0,1)}+   \tilde{c}_2\|\mathcal{D}\|_{\mathcal{L}}|u(t)|\ge {\sigma}.
  \end{equation*}
Using the fact that $w\in C([0,T);H^1_0(0,1))$ and $u\in C([0,T))$, we have $   \tilde{c}_2\|w(t^*)\|_{H^1_0(0,1)}+   \tilde{c}_2\|\mathcal{D}\|_{\mathcal{L}}|u(t^*)|={\sigma}$ and $   \tilde{c}_2\|w(t)\|_{H^1_0(0,1)}+   \tilde{c}_2\|\mathcal{D}\|_{\mathcal{L}}|u(t)|<{\sigma}$ for every $t\in [0,t^*)$. This implies that for any $t\in [0,t^*)$, we have
  \begin{equation*}
     \tilde{N}C_0c^2(\|\mathcal{D}u(t)+w(t)\|_{L^{\infty}(0,1)})<\tilde{N}(1+\lambda_{\tilde{N}}) C_0 c^2(\|\mathcal{D}u(t)+w(t)\|_{L^{\infty}(0,1)})< \tilde{N}(1+\lambda_{\tilde{N}}) C_0 c^2(\sigma)<\frac{1}{\tilde{N}}.
  \end{equation*}
  Based on {Lemma \ref{Tilde-N-Select}}, there exists $\tilde{N}$ such that \eqref{TILDE-N-Select} holds. Therefore, \eqref{equivalent-LMIs} is satisfied, and $V(t)$ is exponentially stable over $[0,{t}^*)$.
 Using {Lemma \ref{h1norm-v}}, it leads to
\begin{equation*}
\begin{aligned}
   k_1\min\left\{\sigma_{\min}(P),\min\{\frac{\sigma_{\min}(P)}{\lambda_{\tilde{N}}},1\}\right\} &(u^2(t)+\|w(t)\|^2_{H^1_0(0,1)})\le V(t)\le e^{-2\delta t}V(0)\\
   &\le k_2e^{-2\delta t}\max\left\{\sigma_{\max}(P),\max\{\frac{\sigma_{\max}(P)}{\lambda_1},1\}\right\}\|y_0\|^2_{H^1_0(0,1)},\;\forall t\in [0,{t}^* ).
   \end{aligned}
\end{equation*}
According to {Lemma \ref{eig-of-P}}, the above inequalities readily lead to
\begin{equation*}
\begin{aligned}
    u^2(t)+\|w(t)\|^2_{H^1_0(0,1)}\le \frac{k_2\max\left\{\sigma_1,\max\{\frac{\sigma_1}{\lambda_1},1\}\right\}}{k_1\min\left\{\sigma_2,\min\{\frac{\sigma_2}{\lambda_{\tilde{N}}},1\}\right\}}e^{-2\delta t}\|y_0\|^2_{H^1_0(0,1)}, \;\forall t\in [0,{t}^*).
    \end{aligned}\end{equation*}
    We define $\hat{M}=\left(\frac{k_2\max\left\{\sigma_1,\max\{\frac{\sigma_1}{\lambda_1},1\}\right\}}{k_1\min\left\{\sigma_2,\min\{\frac{\sigma_2}{\lambda_{\tilde{N}}},1\}\right\}}\right)^{\frac{1}{2}}$. It indicates that 
\begin{equation*}
                u(t)+\|w(t)\|_{H^1_0(0,1)}\le\hat{M}e^{-\delta t}\|y_0\|_{H^1_0(0,1)},\;\;\forall t\in [0,{t}^* ).
\end{equation*}
Hence, using the continuity again, we obtain that
  \begin{equation*}
         \tilde{c}_2\|w(t)\|_{H^1_0(0,1)}+   \tilde{c}_2\|\mathcal{D}\|_{\mathcal{L}}|u(t)|\le    \tilde{c}_2\|\mathcal{D}\|_{\mathcal{L}}\hat{M}e^{-\delta t}\|y_0\|_{H^1_0(0,1)}\le \frac{\sigma}{   \tilde{c}_2}<{\sigma} ,\;\; \forall t\in [0,t^*],
  \end{equation*}
  while contradicting the definition of $t^*$.

Finally, under the condition \eqref{rho}, it follows that
\begin{equation*}
         \tilde{c}_2\|w(t)\|_{H^1_0(0,1)}+   \tilde{c}_2\|\mathcal{D}\|_{\mathcal{L}}|u(t)| <{\sigma},\;\; \forall t\in [0,T).
  \end{equation*}
Therefore, for any $t\in [0,T)$, we have
  \begin{equation*}
     \tilde{N}C_0c^2(\|\mathcal{D}u(t)+w(t)\|_{L^{\infty}(0,1)})<\tilde{N}(1+\lambda_{\tilde{N}}) C_0 c^2(\|\mathcal{D}u(t)+w(t)\|_{L^{\infty}(0,1)})< \tilde{N}(1+\lambda_{\tilde{N}}) C_0 c^2(\sigma)<\frac{1}{\tilde{N}}.
  \end{equation*}
This, combined with the selection of $\tilde{N}$ and \eqref{TILDE-N-Select} again, implies \eqref{equivalent-LMIs} at once.

  \noindent \textbf{Step 3.} We prove that the solution $(u,w)$ can be extended to $[0,+\infty)$ when $y_0$ satisfies \eqref{rho}.
  
According to the results established in {Step 2}, it implies that $\|w(t)\|_{H^1_0(0,1)}+|u(t)|$ is uniformly bounded over $t\in [0,T)$. Thus, by virtue of the last fact in {Theorem \ref{Well-posedness-th}}, the solution can be extended to $[0,+\infty)$. 
 
\noindent \textbf{Step 4.} We prove that the constants $\rho$ and $\hat{M}$ depend only on $\delta$.

(i) From the proof of {Lemma \ref{Tilde-N-Select}}, it can be readily verified that $\tilde{N}$ depends only on $\delta$. Therefore, $\lambda_2(\tilde{N},\delta)$ is a constant depending on $\delta$ as well. Given the construction of \(\hat{M}\) presented above, it follows that $\hat{M}$ depends only on $\delta$.

 (ii) From \eqref{sigma}, we observe that $\sigma$ depends on $\tilde{N}$ and thereby also on $\delta$. By letting $\rho=\frac{\sigma}{\tilde{c}_2^2\|\mathcal{D}\|_{\mathcal{L}}\hat{M}}$, we conclude the proof.
\end{proof}

\begin{proofof}{Theorem \ref{main-th1}}
{Define a constant $k=k_0+\sum_{j=1}^Nk_jb_j$ and a feedback operator
\begin{equation}
    \label{feedback-k}
\mathcal{K}y=\sum_{j=1}^Nk_j \langle y,\varphi_j\rangle,\quad \forall y\in L^2(0,1).
\end{equation}
It follows that $\mathcal{K}:L^2(0,1)\to \mathbb{R}$. Let the control $u:\mathbb{R}^+\to \mathbb{R}$ to be such that
$$\dot{u}(t)=v(t)=ku(t)+\mathcal{K}(y(t)),\;\;\forall t>0,$$ i.e.,
\begin{equation*}
    \label{control-feedback-law}
    \left\{\begin{aligned}
        &\dot{u}(t)=k_0u(t)+\sum_{j=1}^Nk_j \left(\langle y(t),\varphi_j\rangle-\langle\mathcal{D}u(t),\varphi_j\rangle\right)\quad\text{for }t\in\mathbb{R}^+,\\
        &u(0)=0,
    \end{aligned}\right.
\end{equation*}
and let $w(t)=y(t)-\mathcal{D}u(t)$ for $ t\ge 0$. It can be readily deduced that the controlled system \eqref{Dirichlet-sys} is equivalent to the system \eqref{controlled-eqs}.
Thus, according to {Theorem \ref{Well-posedness-th}}, there exists $T=T\left(\|y_0\|_{H^1_0(0,1)}\right)$ such that \eqref{Dirichlet-sys} is well-posed on $[0, T)$. Let $y_0\in H^1_0(0,1)$ satisfy $\|y_0\|_{H^1_0(0,1)}\le \rho$, where $\rho$ is defined in \eqref{rho}. From {Theorem \ref{th2}}, it follows that the solution can be extended to $[0,+\infty)$ and satisfies \eqref{ex-sta33}. In view of $y(t)=w(t)+\mathcal{D}u(t)$, we have
$$\|y(t)\|_{H^1(0,1)}\le\|w(t)\|_{H^1_0(0,1)}+\|\mathcal{D}\|_{\mathcal{L}} |u(t)|\le \|\mathcal{D}\|_{\mathcal{L}}\hat{M}e^{-\delta t}\|y_0\|_{H^1_0(0,1)},\;\;\forall t>0.$$
By letting $M=\|\mathcal{D}\|_{\mathcal{L}}\hat{M}$, we complete the proof of {Theorem \ref{main-th1}}.}
\end{proofof}
\begin{remark}
 Let $\theta_1,\theta_2$ satisfy one of the following conditions
        \begin{center}
             (1) $\theta_1=\theta_2=\frac{\pi}{2}$ (Neumann boundary),\quad(2)  $\theta_1=\frac{\pi}{2},$ $ \theta_2=0$ (Mixed boundary I),\\
            (3) $\theta_1=\theta_2=0$ (Dirichlet boundary),  \quad(4) $\theta_1=0,$ $ \theta_2=\frac{\pi}{2}$ (Mixed boundary II).
        \end{center} Consider the nonlinear parabolic equation with a boundary control
\begin{equation}\label{parabolic-sys}
\left\{\begin{array}{ll}
 \partial_t y(t,x) =\partial_x (a(x)\partial_x y( t,x))+c(x)y( t,x) +qy( t,x)+f(y( t,x))&\text{ for }(t,x)\in \mathbb{R}^+\times (0,1),\\[2mm]
\cos(\theta_1)y(t,0)-\sin(\theta_1)\partial_x y(t,0)=u(t)&\text{ for }t\in \mathbb{R}^+,\\[2mm]
\cos(\theta_2)y(t,1)+\sin(\theta_2)\partial_x y(t,1)=0&\text{ for }t\in \mathbb{R}^+,\\[2mm]
y(0,x)=y_0(x )&\text{ for }x\in (0,1).
\end{array}\right.
 \end{equation}
Introduce the Sturm-Liouville operator $A$,
\begin{equation*}\label{S-L}
       Af=-(\partial_x (a(x)\partial_x f)+c(x)f), 
        \end{equation*}
        with the domain $D( A)=\left\{f\in H^2(0,1):\cos(\theta_1)f(0)-\sin(\theta_1)\partial_x f(0)=0,\cos(\theta_2)f(1)+\sin(\theta_2)\partial_x f(1)=0\}\right.$.  
       Note that $D(A^{\frac{1}{2}})\subset H^1(0,1)$ {and $\|\cdot\|_{D(A^{\frac{1}{2}})}\sim \|\cdot\|_{H^1(0,1)}.$}
       
        Firstly, given $u\in \mathbb{R}$, it is not difficult to find $\mathcal{D}u\in H^1(0,1)$ satisfying
        $$\left\{\begin{array}{ll}
&\cos(\theta_1)(\mathcal{D}u)(0)-\sin(\theta_1)\partial_x (\mathcal{D}u)(0)=u,\\[2mm]
&\cos(\theta_2)(\mathcal{D}u)(1)+\sin(\theta_2)\partial_x (\mathcal{D}u)(1)=0,
\end{array}\right.$$and let $w(t)=y(t)-\mathcal{D} u(t),\; t\ge 0$. Similar to the proof of Theorem \ref{main-th1}, we have the following corollary.
        \begin{corollary}
     \label{co1}
  Assume that $f$ satisfies the assumption (H). Given any $\delta>0$, there are constants $\rho>0$, $k\in \mathbb{R}$ and $M\ge 1$, as well as a feedback operator $\mathcal{K}:L^2(0,1)\to\mathbb{R}$, all depending only on $\delta$, such that for every $y_0\in D(A^{\frac{1}{2}})$ satisfying
         \begin{equation*}
             \|y_0\|_{H^1(0,1)}\le \rho,
         \end{equation*}
        the system \eqref{parabolic-sys} with $u(t)=\int_0^t e^{k(t-s)}\mathcal{K}(y(s))ds$ admits a unique solution $$y\in C([0,+\infty);H^1(0,1))\cap 
 C^1(\mathbb{R}^+;L^2(0,1))  $$ verifying the following exponential stability
         \begin{equation*}
             \|y(t)\|_{H^1(0,1)}\le Me^{-\delta t}\|y_0\|_{H^1(0,1)},\;\;\forall t>0.
         \end{equation*}
\end{corollary}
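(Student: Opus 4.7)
The plan is to follow the proof of Theorem~\ref{main-th1} line-by-line, isolating those few steps that actually depend on the boundary conditions at $x=0,1$, namely (a) the choice of the lifting operator $\mathcal{D}$ and (b) the verification of the Kalman rank condition for the finite-dimensional pair $(A_0,B_0)$. Everything else — the spectral expansion, the well-posedness of the closed-loop system on $D(A^{\frac{1}{2}})\times\mathbb{R}^{N+1}$, the bounds in Lemma~\ref{eig-of-P} on the Lyapunov matrix $P$, and the bootstrap argument of Theorem~\ref{th2} — depends only on abstract properties already guaranteed by the setup: the orthonormal basis of eigenfunctions $\{\varphi_n\}$, the growth $\lambda_n=\mathcal{O}(n^2)$, the norm equivalence $\|\cdot\|_{D(A^{\frac{1}{2}})}\sim\|\cdot\|_{H^1(0,1)}$, the Agmon embedding $H^1(0,1)\hookrightarrow L^\infty(0,1)$, and the assumption (H) on $f$.

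First I would fix, in each of the four boundary cases, a smooth lift $\mathcal{D}u\in H^1(0,1)$ of the inhomogeneous boundary datum at $x=0$ satisfying the homogeneous condition at $x=1$; this is explicit and $\mathcal{D}\in\mathcal{L}(\mathbb{R},H^1(0,1))$ in every case. Setting $w=y-\mathcal{D}u$ and imposing $\dot{u}=KY_N$ exactly as in Section~\ref{sec3} yields modal equations of the form \eqref{mode-eq} with
\begin{equation*}
a_n=\langle q\mathcal{D}(1)-A\mathcal{D}(1),\varphi_n\rangle,\qquad b_n=-\langle \mathcal{D}(1),\varphi_n\rangle,
\end{equation*}
and the structured matrices $A_0,B_0$ exactly as in Lemma~\ref{lemma-mode}.

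The central new ingredient is the Kalman rank condition for this $(A_0,B_0)$, which reduces, by the same Vandermonde computation as in Section~\ref{sec3}, to showing $a_j+(q-\lambda_j)b_j\neq 0$ for each $1\le j\le N$. Integrating by parts twice in the identity $a_j+(q-\lambda_j)b_j=\lambda_j\langle\mathcal{D}(1),\varphi_j\rangle-\langle A\mathcal{D}(1),\varphi_j\rangle$ and using that $\mathcal{D}(1)$ and $\varphi_j$ share the homogeneous boundary condition at $x=1$ eliminates the boundary contribution there, leaving only the boundary term at $x=0$. A short case analysis on $\theta_1$ shows that this term is a nonzero constant multiple of $\varphi_j(0)$ when $\sin\theta_1\neq 0$, and of $\partial_x\varphi_j(0)$ when $\sin\theta_1=0$; in either case the unique continuation property of Sturm–Liouville eigenfunctions (\cite[Corollary 15.2.2]{TW}) forces it to be nonzero. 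Pole-shifting then supplies a $K=[k_0,\ldots,k_N]$ placing the spectrum of $A_0+B_0K$ strictly to the left of $-2\delta$, once $N$ is chosen so that $q-\lambda_n<-2\delta$ for $n>N$.

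The remaining steps are literal transcriptions of Section~\ref{sec3}: the analytic-semigroup framework of \cite{Pazy} gives a local classical solution on the product space $D(A^{\frac{1}{2}})\times\mathbb{R}^{N+1}$ because the nonlinear part satisfies the local Lipschitz estimate coming from (H)(i); Lemma~\ref{eig-of-P} on the bounds for $\sigma_{\max}(P)$ and $\sigma_{\min}(P)$ is unchanged; and the three-step Lyapunov argument of Theorem~\ref{th2} goes through verbatim, its only PDE-side inputs being the norm equivalence $\|w\|_{D(A^{\frac{1}{2}})}\sim\|w\|_{H^1(0,1)}$, the Agmon inequality, and the growth bound (H)(ii). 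One then obtains constants $\rho,\hat{M}$ depending only on $\delta$ with $|u(t)|+\|w(t)\|_{H^1(0,1)}\le \hat{M}e^{-\delta t}\|y_0\|_{H^1(0,1)}$; combining this with $y=w+\mathcal{D}u$ and setting $M=\|\mathcal{D}\|_{\mathcal{L}}\hat{M}$ yields the stated exponential bound under the feedback-with-memory $u(t)=\int_0^t e^{k(t-s)}\mathcal{K}(y(s))\,ds$, with $\mathcal{K}$ and $k$ defined as in \eqref{feedback-k}. I expect the only genuine obstacle to be the case analysis required to establish nonvanishing of $a_j+(q-\lambda_j)b_j$ across the four boundary alternatives; once that is settled via unique continuation, the rest is bookkeeping carried over from the Dirichlet case.
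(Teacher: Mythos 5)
Your proposal is correct and follows essentially the same route as the paper, which itself only indicates that the corollary is proved "similarly to Theorem \ref{main-th1}" after adapting the lifting $\mathcal{D}$ and the spectral framework to the new boundary conditions. Your explicit verification of the Kalman rank condition — integration by parts reducing $a_j+(q-\lambda_j)b_j$ to a Wronskian-type boundary term at $x=0$, which equals a nonzero multiple of $\varphi_j(0)$ or $\partial_x\varphi_j(0)$ depending on $\theta_1$ and is nonzero by unique continuation — is exactly the detail the paper leaves implicit, and the remaining steps indeed carry over verbatim using the norm equivalence $\|\cdot\|_{D(A^{1/2})}\sim\|\cdot\|_{H^1(0,1)}$ and assumption (H).
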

\end{remark}

\section{Application}\label{sec5}
 This section is divided into two parts, which are focused on applying {Theorem \ref{main-th1}} to the Burgers equation and the Allen-Cahn equation, respectively. We first show that the nonlinear term of both equations satisfies the assumption (H), which ensures locally rapid stabilization. Next, using certain energy estimate inequalities, we deduce that the $H^1$-norm of the uncontrolled system decays. Without control, the system tends to converge to a region where the rapid stabilization is guaranteed. Thereafter, by applying Theorem \ref{main-th1}, we can achieve globally rapid stabilization.

 
\subsection{The Burgers equation}

Consider the Burgers equation 
\begin{equation}
    \label{burgers-eq}
\left\{\begin{array}{ll}
\partial_t y(t,x)-\partial_{xx}y(t,x)+y (t,x) \partial_x y(t,x)=0 & \text{for }(t,x)\in (0,T)\times (0,1),\\[2mm]
y( t,0)=y( t,1)=0& \text{for }t\in (0,T),\\[2mm]
y(t_0,x)=y_0(x)& \text{for }x\in (0,1).
\end{array}\right.
\end{equation}

\begin{lemma}
    \label{thm1.1-to-burs}
    The nonlinear term $f(y)=-y\partial_x y$ in \eqref{burgers-eq} satisfies the assumption (H). 
\end{lemma}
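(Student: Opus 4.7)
The plan is to verify both conditions (i) and (ii) of assumption (H) directly, using the Sobolev embedding $H^1(0,1)\hookrightarrow L^\infty(0,1)$ available in one dimension (cf.\ inequality \eqref{Agmon}).

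For condition (ii), I would use the pointwise factorization $f(y)=-y\,\partial_x y$ and estimate
$\|y\,\partial_x y\|_{L^2(0,1)}\le \|y\|_{L^\infty(0,1)}\|\partial_x y\|_{L^2(0,1)}\le \|y\|_{L^\infty(0,1)}\|y\|_{H^1(0,1)}$,
so the function $c(s)=s$, which is strictly increasing, continuous on $[0,+\infty)$, and vanishes at $0$, does the job.

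For condition (i), the key algebraic identity is
\[
f(y_1)-f(y_2)=-y_1\,\partial_x(y_1-y_2)-(y_1-y_2)\,\partial_x y_2,
\]
obtained by adding and subtracting $y_1\partial_x y_2$. Taking $L^2$ norms and using the triangle inequality together with $\|uv\|_{L^2}\le \|u\|_{L^\infty}\|v\|_{L^2}$, I obtain
\[
\|f(y_1)-f(y_2)\|_{L^2(0,1)}\le \|y_1\|_{L^\infty(0,1)}\|y_1-y_2\|_{H^1(0,1)}+\tilde{c}_2\|y_1-y_2\|_{H^1(0,1)}\|y_2\|_{H^1(0,1)}.
\]
Now, for $y_1,y_2\in B_{H^1}(y_0,r)$, I use $\|y_i\|_{H^1(0,1)}\le \|y_0\|_{H^1(0,1)}+r$ together with \eqref{Agmon} to bound $\|y_1\|_{L^\infty(0,1)}\le \tilde{c}_2(\|y_0\|_{H^1(0,1)}+r)$, yielding
\[
\|f(y_1)-f(y_2)\|_{L^2(0,1)}\le 2\tilde{c}_2\bigl(\|y_0\|_{H^1(0,1)}+r\bigr)\|y_1-y_2\|_{H^1(0,1)},
\]
so one may take $C(\|y_0\|_{H^1(0,1)},r)=2\tilde{c}_2(\|y_0\|_{H^1(0,1)}+r)$.

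There is no real obstacle here; the whole argument is a routine application of $H^1\hookrightarrow L^\infty$ in 1D plus the Leibniz-type splitting of the quadratic nonlinearity. The only thing worth being careful about is choosing the splitting so that each factor is controlled by a norm already appearing in the hypothesis (putting the derivative on the \emph{difference} in one term and on $y_2$ in the other), which is precisely what makes both conditions in (H) come out with the correct dependences.
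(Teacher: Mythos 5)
Your proposal is correct and follows essentially the same route as the paper: the identical add-and-subtract splitting $f(y_1)-f(y_2)=-y_1\partial_x(y_1-y_2)-(y_1-y_2)\partial_x y_2$, the bound $\|uv\|_{L^2}\le\|u\|_{L^\infty}\|v\|_{L^2}$ combined with the 1D embedding \eqref{Agmon}, and the same choices $C(\|y_0\|_{H^1(0,1)},r)=2\tilde{c}_2(r+\|y_0\|_{H^1(0,1)})$ and $c(s)=s$. Nothing further is needed.
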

\begin{proof}
It is not hard to check that for every $y_1,y_2\in B_{{H^1}}(y_0,r)$, the following holds
    \begin{equation*}
             \label{bf}
             \begin{aligned}
                 \|f(y_1)-f(y_2)\|_{L^2(0,1)}   &\le \|y_1\partial_x y_1-y_1\partial_x y_2\|_{L^2(0,1)}+\|y_1\partial_x y_2-y_2\partial_x y_2\|_{L^2(0,1)}\\
                 &\le \|y_1\|_{L^{\infty}(0,1)}\|\partial_x y_1-\partial_x y_2\|_{L^2(0,1)}+\|y_1-y_2\|_{L^{\infty}(0,1)}\|\partial_x y_2\|_{L^2(0,1)}\\
                 &\le 2\tilde{c}_2(r+\|y_0\|_{H^1(0,1)})\|y_1-y_2\|_{H^1(0,1)}.
             \end{aligned}
         \end{equation*}
       Moreover, we have
       \begin{equation*}
             \label{bf1}
                \|f(y)\|_{L^2(0,1)}= \|y\cdot \partial_x y\|_{L^2(0,1)}\le \|y\|_{L^{\infty}(0,1)}\|y\|_{H^1(0,1)},\; \forall y\in H^1(0,1).
         \end{equation*}
        Therefore, the assumption (H) is satisfied with $C(\|y_0\|_{H^1(0,1)},r)=2\tilde{c}_2(r+\|y_0\|_{H^1(0,1)})$ and $c(x)=x$.
\end{proof}

The following well-known result holds (see \cite{S}).
\begin{lemma}
    (i) If $y_0\in L^2(0,1)$, then the equation \eqref{burgers-eq} admits a unique mild solution 
        \begin{equation*}
            \label{Burgers-wellposed1}
            y\in L^{\infty}(0,T;L^2(0,1))\cap L^2(0,T;H^1_0(0,1))\cap H^1(0,T;H^{-1}(0,1)).
        \end{equation*}
        Moreover, $y(t)\in H^2(0,1)\cap H^1_0(0,1)$ for every $t\in (0,T)$.
        
         (ii) If $y_0\in H^1_0(0,1)$, then 
         \begin{equation*}
            \label{Burgers-wellposed3}
            y\in L^{2}(0,T;H^2(0,1))\cap L^{\infty}(0,T;H^1_0(0,1))\cap H^1(0,T;L^2(0,1)).
        \end{equation*}
        Moreover, the solution coincides with the classical sense.
        
      (iii) Furthermore, if $y_0\in H^2(0,1)\cap H^1_0(0,1)$, then 
         \begin{equation*}
            \label{Burgers-wellposed4}
            y\in L^{\infty}(0,T;H^2(0,1))\cap H^1(0,T;H^1_0(0,1))\cap H^2(0,T;H^{-1}(0,1)).
        \end{equation*}
\end{lemma}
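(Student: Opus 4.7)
The plan is to construct solutions via a Faedo--Galerkin scheme in the Dirichlet eigenfunction basis of $-\partial_{xx}$ on $(0,1)$, and to obtain the three regularity statements by a bootstrap of energy estimates. The feature that makes every estimate close in 1D with homogeneous Dirichlet data is the cubic cancellation
$$\int_0^1 y^2\,\partial_x y\,dx = \tfrac{1}{3}\int_0^1 \partial_x(y^3)\,dx = 0, \qquad y\in H^1_0(0,1),$$
combined with the Sobolev embedding $H^1(0,1)\hookrightarrow L^\infty(0,1)$ used in Lemma \ref{g-n}.

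For \emph{(i)}, I would produce Galerkin approximants $y_n$ living in the span of the first $n$ eigenfunctions. Testing against $y_n$ and using the cubic cancellation above gives
$$\tfrac{1}{2}\tfrac{d}{dt}\|y_n\|_{L^2(0,1)}^2+\|\partial_x y_n\|_{L^2(0,1)}^2=0,$$
hence uniform bounds in $L^\infty(0,T;L^2)\cap L^2(0,T;H^1_0)$. Because $y_n\partial_x y_n=\tfrac{1}{2}\partial_x(y_n^2)$, the 1D embedding yields $y_n^2$ bounded in $L^2(0,T;L^\infty)$, so $\partial_t y_n$ is bounded in $L^2(0,T;H^{-1})$. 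Aubin--Lions then gives strong $L^2(0,T;L^2)$-compactness, enough to pass to the limit in the nonlinearity; parabolic regularization (writing $y$ as a mild solution of the heat equation perturbed by the nonlinear source) upgrades $y(t)$ to $H^2\cap H^1_0$ for every $t\in(0,T)$.

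For \emph{(ii)}, assume $y_0\in H^1_0(0,1)$ and test the equation with $-\partial_{xx}y$. Using $\|y\|_{L^\infty}\lesssim \|y\|_{H^1_0}$ and Young's inequality,
$$\Bigl|\int_0^1 y\,\partial_x y\,\partial_{xx}y\,dx\Bigr|\le \|y\|_{L^\infty}\|\partial_x y\|_{L^2}\|\partial_{xx}y\|_{L^2}\le \tfrac{1}{2}\|\partial_{xx}y\|_{L^2}^2+C\|\partial_x y\|_{L^2}^4,$$
so after absorbing the dissipation on the left, Gronwall (with the $L^2$ bound from \emph{(i)}) delivers $y\in L^\infty(0,T;H^1_0)\cap L^2(0,T;H^2)$, and then the equation itself gives $\partial_t y\in L^2(0,T;L^2)$. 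The solution therefore coincides with the classical one. For \emph{(iii)}, assuming additionally $y_0\in H^2\cap H^1_0$, I would differentiate the equation in $t$ and run an energy estimate on $\partial_t y$, using the $L^\infty_{t,x}$ bound now available from \emph{(ii)}; the initial datum $\partial_t y(0)=\partial_{xx}y_0-y_0\partial_x y_0$ lies in $L^2$ by the embedding $H^2\hookrightarrow L^\infty$. Uniqueness in every class follows by writing the equation for $z=y_1-y_2$, which is a linear advection--diffusion equation with coefficients in $L^\infty_t L^\infty_x$; a straightforward $L^2$ energy estimate plus Gronwall yields $z\equiv 0$.

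The main technical point — rather than any single obstacle — is lining up the correct test function at each regularity level so that the quadratic nonlinearity is absorbed by the viscous dissipation on the left-hand side. In dimension greater than one this absorption would typically fail at the $H^1$-level, but in 1D the embedding $H^1\hookrightarrow L^\infty$ together with the cubic cancellation keeps every step uniformly coercive, and no smallness assumption on $y_0$ is needed.
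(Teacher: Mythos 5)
Your argument is essentially sound, but note that the paper does not prove this lemma at all: it is quoted as a well-known result with a citation to Shirikyan \cite{S}, and the paper only carries out (in the Appendix, for Lemma \ref{energy-estimatet}) the quantitative multiplier estimates built on top of it. Your Galerkin-plus-bootstrap construction is the standard self-contained route, and its core computations --- the cancellation $\int_0^1 y^2\partial_x y\,dx=0$ for the $L^2$ bound, testing with $-\partial_{xx}y$ and absorbing $\|y\|_{L^\infty}\|\partial_x y\|_{L^2}\|\partial_{xx}y\|_{L^2}$ through $H^1(0,1)\hookrightarrow L^\infty(0,1)$, and differentiating the equation in time to control $h=\partial_t y$ when $y_0\in H^2(0,1)\cap H^1_0(0,1)$ --- are exactly the estimates the paper performs in its Appendix, so your route buys a proof where the paper only buys a reference. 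Two steps deserve more care than your sketch gives them, though neither is a genuine gap. First, uniqueness in class (i): with $y_0\in L^2(0,1)$ the coefficients $y_1,y_2$ of the difference equation are not in $L^\infty_t L^\infty_x$ up to $t=0$, only (roughly) in $L^4(0,T;L^\infty(0,1))$ via $\|y\|_{L^\infty(0,1)}\lesssim\|y\|_{L^2(0,1)}^{1/2}\|y\|_{H^1(0,1)}^{1/2}$; the Gronwall rate must therefore be an $L^1$-in-time function of $\|y_i\|_{H^1(0,1)}$, obtained after an integration by parts in the term $\int_0^1 y_2\,\partial_x z\, z\,dx$ and a Gagliardo--Nirenberg interpolation of $\|z\|_{L^4(0,1)}$, rather than the bounded-coefficient estimate you describe. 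Second, the instantaneous smoothing $y(t)\in H^2(0,1)\cap H^1_0(0,1)$ for all $t\in(0,T)$ in (i) is not a one-line Duhamel remark, since the source $y\partial_x y$ is initially only in a low integrability class in time; it is cleaner to argue as the paper's Appendix does, with the weighted multiplier $-(t-\tilde t)\partial_{xx}y$ and the time-differentiated equation, or equivalently by restarting your steps (ii)--(iii) from an almost-every time $\tau>0$ at which $y(\tau)\in H^1_0(0,1)$. Both repairs are routine.
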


The following energy estimates play an important role in proving the dissipativity of the \(H^1\)-norm. We outline the proof in {Appendix}.

\begin{lemma}\label{energy-estimatet}
For each $\tau,t \in\left(0, T\right)$ and $y_0\in L^2(0,1)$, there are constants $C_1=C_1\left(\left\|y_0\right\|_{L^2(0,1)}, t\right),C_2=C_2\left(\|y(\tau)\|_{H^1(0,1)}\right),C_3=C_3\left(\|y(\tau))\|_{H^1(0,1)},t,\tau\right)$ and $C_4=C_4\left(\|y(\tau))\|_{H^2(0,1)}\right)$ so that
\begin{enumerate}
    \item[(i)]For $t>0$,
    \begin{equation}
        \label{energy estimate1}
        \|y(t)\|^2_{L^2(0,1)} \leq e^{-2 c_1t}\left\|y_0\right\|^2_{L^2(0,1)},
    \end{equation}
    \begin{equation}
        \label{energy estimate3}
      \left\|y(t)\right\|^2_{H^1(0,1)} \le C_1\left(\left\|y_0\right\|_{L^2(0,1)}, t\right).
    \end{equation}

    \item[(ii)]For $t>\tau$,
    \begin{equation}
        \label{energy estimate4}
       \left\|y(t)\right\|^2_{H^1(0,1)} \le C_2\left(\|y(\tau)\|_{H^1(0,1)}\right),
    \end{equation}

    \begin{equation}
        \label{energy estimate8}
        \left\|y(t)\right\|^2_{H^2(0,1)} \le C_3\left(\|y(\tau))\|_{H^1(0,1)},t,\tau\right).
    \end{equation}
   
    \begin{equation}
        \label{energy estimate10}
      \left\|y(t)\right\|^2_{H^2(0,1)} \leq C_4\left(\|y(\tau))\|_{H^2(0,1)}\right).
    \end{equation}
\end{enumerate}
\end{lemma}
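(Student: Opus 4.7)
The plan is to derive all five estimates through classical energy methods, organized into $L^2$, $H^1$, and $H^2$ levels, with Gronwall (or uniform Gronwall) supplying the final step at each level. At the $L^2$ level, multiplying the first equation of \eqref{burgers-eq} by $y$ and integrating on $(0,1)$ makes the convective contribution $\int_0^1 y^2\partial_x y\,dx=\tfrac{1}{3}[y^3]_0^1$ vanish by the Dirichlet condition, so that $\tfrac{d}{dt}\|y\|_{L^2(0,1)}^2+2\|\partial_x y\|_{L^2(0,1)}^2=0$. Poincar\'e's inequality \eqref{po} then yields \eqref{energy estimate1}, and integration in time produces the crucial a priori bound $\int_{t_1}^{\infty}\|\partial_x y(s)\|_{L^2(0,1)}^2\,ds\le\tfrac{1}{2}\|y(t_1)\|_{L^2(0,1)}^2$ for any $t_1\ge 0$, which drives the Gronwall steps below.

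For the $H^1$ estimates \eqref{energy estimate3} and \eqref{energy estimate4}, I would multiply the equation by $-\partial_{xx}y$, integrate by parts, and estimate the nonlinear term $\int_0^1 y\,\partial_x y\,\partial_{xx}y\,dx$ via Agmon's inequality \eqref{Agmon1} and Young's inequality, absorbing $\tfrac{1}{2}\|\partial_{xx}y\|_{L^2(0,1)}^2$ into the dissipation, to obtain
\begin{equation*}
\tfrac{d}{dt}\|\partial_x y\|_{L^2(0,1)}^2+\|\partial_{xx}y\|_{L^2(0,1)}^2\le c_2^2\|\partial_x y\|_{L^2(0,1)}^3\|y\|_{L^2(0,1)}.
\end{equation*}
For \eqref{energy estimate4} with $y(\tau)\in H^1_0(0,1)$, since $\|y(s)\|_{L^2(0,1)}\le\|y(\tau)\|_{L^2(0,1)}$ on $(\tau,t)$, the right-hand side is bounded by $c_2^2\|y(\tau)\|_{L^2(0,1)}\|\partial_x y\|_{L^2(0,1)}\cdot\|\partial_x y\|_{L^2(0,1)}^2$, and Gronwall combined with the $L^2$-integrability of $\|\partial_x y\|_{L^2(0,1)}^2$ yields the bound $C_2(\|y(\tau)\|_{H^1(0,1)})$. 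For \eqref{energy estimate3}, starting only from $y_0\in L^2(0,1)$, I invoke a uniform Gronwall argument: the integrability of $\|\partial_x y\|_{L^2(0,1)}^2$ on $(0,t)$ furnishes some $s_0\in(0,t)$ with $\|\partial_x y(s_0)\|_{L^2(0,1)}^2\lesssim\|y_0\|_{L^2(0,1)}^2/t$, and Gronwall on $(s_0,t)$ delivers $C_1(\|y_0\|_{L^2(0,1)},t)$.

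The $H^2$ estimates \eqref{energy estimate8} and \eqref{energy estimate10} follow the same template at one higher order. A key preliminary fact is that, since $y$ vanishes on the boundary for all $t$, so does $\partial_t y$ there, and the equation then forces $\partial_{xx}y|_{x\in\{0,1\}}=0$, legitimizing integration by parts at the highest order. Differentiating the equation twice in $x$ and testing against $\partial_{xx}y$ produces an inequality of the form $\tfrac{d}{dt}\|\partial_{xx}y\|_{L^2(0,1)}^2+\|\partial_{xxx}y\|_{L^2(0,1)}^2\lesssim\|\partial_x y\|_{L^\infty(0,1)}\|\partial_{xx}y\|_{L^2(0,1)}^2$, with $\|\partial_x y\|_{L^\infty(0,1)}$ controlled via Gagliardo-Nirenberg (Lemma \ref{g-n}). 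Starting from $y(\tau)\in H^2(0,1)\cap H^1_0(0,1)$, a direct Gronwall combined with the $H^1$ bound \eqref{energy estimate4} gives \eqref{energy estimate10}. For \eqref{energy estimate8}, after \eqref{energy estimate4} is in hand, integrating the $H^1$-level identity shows $\int_\tau^t\|\partial_{xx}y(s)\|_{L^2(0,1)}^2\,ds$ is controlled in terms of $\|y(\tau)\|_{H^1(0,1)}$ and $t-\tau$; a uniform Gronwall step then selects an intermediate $s_1\in(\tau,t)$ at which $\|\partial_{xx}y(s_1)\|_{L^2(0,1)}$ is bounded, and Gronwall on $(s_1,t)$ propagates this to time $t$, producing the $t,\tau$ dependence in $C_3$. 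The main technical obstacle I anticipate is precisely the cubic interaction $\int_0^1\partial_x y\,(\partial_{xx}y)^2\,dx$: because $\partial_x y$ does not vanish at the boundary, controlling $\|\partial_x y\|_{L^\infty(0,1)}$ requires a careful Gagliardo-Nirenberg interpolation, and one has to balance the resulting $\|\partial_{xxx}y\|_{L^2(0,1)}^\alpha$ factor against the available dissipation before Gronwall becomes applicable; the remaining pieces reduce to systematic use of Young, Poincar\'e, Agmon, and the (uniform) Gronwall lemma.
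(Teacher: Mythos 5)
Your treatment of \eqref{energy estimate1}, the time-integrated dissipation bound, and the $H^1$ estimates \eqref{energy estimate3}--\eqref{energy estimate4} coincides with the paper's: same $L^2$ identity, same multiplier $-\partial_{xx}y$ with Agmon plus Young, and your ``uniform Gronwall / pick $s_0$'' device for \eqref{energy estimate3} is just a repackaging of the paper's time-weighted multiplier $-2(t-\tilde t)\partial_{xx}y$. Where you genuinely diverge is in the $H^2$ estimates. The paper never differentiates the equation in $x$: it sets $h=\partial_t y$, derives the equation $\partial_t h-\partial_{xx}h+h\partial_x y+y\partial_x h=0$, obtains $\|h(t)\|_{L^2}$ bounds (with a $(t-\tau)$ weight for \eqref{energy estimate8}, and by propagation from $\|h(\tau)\|_{L^2}\lesssim \|y(\tau)\|_{H^2}+\|y(\tau)\|^2_{H^2}$ for \eqref{energy estimate10}), and then recovers $\|\partial_{xx}y\|_{L^2}\le \|\partial_t y\|_{L^2}+\|y\partial_x y\|_{L^2}$ algebraically from the PDE. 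This route stays entirely within the regularity already established ($y(t)\in H^2\cap H^1_0$, $\partial_t y\in L^2$) and involves no boundary terms. Your route (differentiate twice in $x$, test with $\partial_{xx}y$) is a legitimate alternative, and your observation that $\partial_{xx}y$ vanishes at $x=0,1$ is correct and does legitimize the integration by parts; but it costs you two things the paper's argument avoids. First, you need $\partial_{xxx}y\in L^2_{\mathrm{loc}}$ to even write the identity, which is not part of the stated regularity and would have to be justified by further parabolic smoothing or an approximation argument. Second, your claim that \eqref{energy estimate10} follows by ``a direct Gronwall combined with \eqref{energy estimate4}'' is too quick: the naive estimate $\frac{d}{dt}\|\partial_{xx}y\|^2\lesssim\|\partial_x y\|_{L^\infty}\|\partial_{xx}y\|^2$ has a Gronwall exponent $\int_\tau^t\|\partial_x y\|_{L^\infty}\,ds$ which is only bounded by a quantity growing in $t$ (since $\|\partial_x y\|_{L^\infty}$ is bounded but not integrable on $(\tau,\infty)$), and would yield a constant depending on $t$, contradicting the statement that $C_4$ depends only on $\|y(\tau)\|_{H^2}$. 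To get the $t$-uniform bound you must keep the dissipation $\|\partial_{xxx}y\|^2$, interpolate the cubic term against it, and land on a coefficient controlled by $\|\partial_x y\|_{L^2}^2$, which \emph{is} integrable on $(\tau,\infty)$ by the $L^2$ identity; you gesture at exactly this balancing in your last sentence, so the plan is salvageable, but as written the $C_4$ step is the one genuine soft spot, and the paper's $h=\partial_t y$ device is the cheaper way around both issues.
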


\begin{lemma}
\label{diss-property}
    For each $\tau>0$ and $y_0\in L^2(0,1)$, there is a constant $L=L\left(\|y_0\|_{L^2(0,1)},\tau\right)$ so that the solution of the Burgers equation \eqref{burgers-eq} satisfies the following dissipative property
     \begin{equation}
         \label{diss-property-burgers}
         \| y(t)\|_{H^1(0,1)}\le L\left(\|y_0\|_{L^2(0,1)},\tau\right)e^{-\frac{c_1 }{2}t},\;\;\forall t>\tau.
     \end{equation}
\end{lemma}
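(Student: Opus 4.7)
The plan is to combine the $L^2$ exponential decay already recorded in \eqref{energy estimate1} with a Gronwall argument on the $H^1$ energy identity, exploiting that the smallness of $\|y(t)\|_{L^2}$, and hence of $\|y(t)\|_{L^\infty}$, turns the nonlinear term into a time-integrable perturbation of the linear dissipation. The strategy splits naturally into a stabilization of $\|\partial_x y\|$ on $[\tau,\infty)$, after which combination with the already exponentially decaying $L^2$ part finishes the proof.

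First I would use Lemma \ref{energy-estimatet} to secure a \emph{uniform} $H^1$ bound on $[\tau,\infty)$: applying \eqref{energy estimate3} at time $\tau$ gives $\|y(\tau)\|_{H^1}^2\le C_1(\|y_0\|_{L^2},\tau)$, and then \eqref{energy estimate4} propagates this forward to give $\|y(t)\|_{H^1}^2\le C_2(\|y(\tau)\|_{H^1})=:M_1^2$ for every $t>\tau$, where $M_1$ depends only on $\|y_0\|_{L^2}$ and $\tau$. Next, I would multiply the Burgers equation by $-\partial_{xx}y$ and integrate; the Dirichlet boundary conditions kill all boundary contributions and yield
\begin{equation*}
\frac{d}{dt}\|\partial_x y\|_{L^2}^2+2\|\partial_{xx}y\|_{L^2}^2=2\int_0^1 y\,\partial_x y\,\partial_{xx}y\,dx.
\end{equation*}
The right-hand side is bounded by $\|\partial_{xx}y\|^2+\|y\|_{L^\infty}^2\|\partial_x y\|^2$. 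Combining this with the Dirichlet Poincaré-type inequality $\|\partial_{xx}y\|_{L^2}^2\ge c_1\|\partial_x y\|_{L^2}^2$ — which follows by integrating $\|\partial_x y\|_{L^2}^2=-\int_0^1 y\partial_{xx}y\,dx\le\|y\|_{L^2}\|\partial_{xx}y\|_{L^2}$ against \eqref{po} — I would obtain
\begin{equation*}
\frac{d}{dt}\|\partial_x y\|^2+c_1\|\partial_x y\|^2\le\|y\|_{L^\infty}^2\|\partial_x y\|^2.
\end{equation*}

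Finally, using \eqref{Agmon1}, the uniform bound $\|\partial_x y(t)\|\le M_1$, and the decay \eqref{energy estimate1}, the coefficient on the right-hand side is controlled by
\begin{equation*}
\|y(t)\|_{L^\infty}^2\le c_2^2\|\partial_x y(t)\|\,\|y(t)\|_{L^2}\le c_2^2 M_1\|y_0\|_{L^2}\,e^{-c_1 t},
\end{equation*}
which is integrable in time. A standard Gronwall argument on $[\tau,t]$ then produces $\|\partial_x y(t)\|^2\le L_1^2(\|y_0\|_{L^2},\tau)\,e^{-c_1 t}$, and combining with $\|y(t)\|_{L^2}\le\|y_0\|_{L^2}e^{-c_1 t}$ yields \eqref{diss-property-burgers} with $L$ depending only on $\|y_0\|_{L^2}$ and $\tau$.

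\emph{The main obstacle} is getting the uniform $H^1$ bound $M_1$ on $[\tau,\infty)$ before running the Gronwall step: without it, the nonlinear term $\|y\|_{L^\infty}^2\|\partial_x y\|^2$ becomes effectively cubic in $\|\partial_x y\|$ through \eqref{Agmon1}, forcing a bootstrap with uncertain closure. Lemma \ref{energy-estimatet}(ii) is precisely what replaces the floating $\|\partial_x y\|$ factor by a fixed constant $M_1$, converting the nonlinearity into a scalar exponentially decaying perturbation of the linear dissipation, after which everything closes routinely.
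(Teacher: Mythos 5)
Your proposal is correct, but it takes a genuinely different route from the paper. The paper's proof is interpolation-based: it uses the Gagliardo--Nirenberg inequality \eqref{interpolation} to write $\|\partial_x y(t)\|_{L^2}\le c_3\|y(t)\|_{L^2}^{1/2}\|y(t)\|_{H^2}^{1/2}$, then feeds in the $L^2$ decay \eqref{energy estimate1} together with the \emph{uniform $H^2$ bound} \eqref{energy estimate10} on $[\tau,\infty)$ (itself obtained through the smoothing chain $L^2\to H^1$ at $\tau/2$ via \eqref{energy estimate3}, $H^1\to H^2$ at $\tau$ via \eqref{energy estimate8}), so the rate $c_1/2$ appears directly as the square root of the $L^2$ rate and no further differential inequality is needed. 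You instead stay entirely at the $H^1$ level: you use only \eqref{energy estimate3}--\eqref{energy estimate4} to get a uniform bound $M_1$ on $\|\partial_x y\|$ for $t>\tau$, turn the nonlinear term in the $d/dt\,\|\partial_x y\|^2$ identity into the coefficient $\|y\|_{L^\infty}^2\le c_2^2 M_1\|y_0\|_{L^2}e^{-c_1 t}$ via Agmon's inequality \eqref{Agmon1} and the $L^2$ decay, and close with Gronwall against the linear dissipation $\|\partial_{xx}y\|^2\ge c_1\|\partial_x y\|^2$ (your derivation of this second-order Poincar\'e inequality via $\|\partial_x y\|^2=-\langle y,\partial_{xx}y\rangle$ and \eqref{po} is valid for $y\in H^2\cap H^1_0$). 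All constants you produce depend only on $\|y_0\|_{L^2}$ and $\tau$, as required, and the $H^2$-regularity of $y(t)$ for $t>0$ needed to justify the energy identity is exactly what the well-posedness lemma supplies, at the same level of rigor as the paper's own appendix computations. The trade-off: your argument is more elementary in that it bypasses the $H^2$ smoothing estimates \eqref{energy estimate8}, \eqref{energy estimate10} and the $\partial_t y$ bounds behind them, at the cost of an extra Gronwall step; the paper's interpolation argument is shorter once the full suite of Lemma \ref{energy-estimatet} is available and scales more readily to decay of higher norms.
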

\begin{proof}
    Firstly, from the Gagliardo-Nirenberg interpolation inequality \eqref{g-n}, we have
    \begin{equation}
        \label{diss-est1}
        \|\partial_x y(t)\|_{L^2(0,1)}\le c_3\|y(t)\|_{L^2(0,1)}^{\frac{1}{2}}\|y(t)\|^{\frac{1}{2}}_{H^2(0,1)},\;\;\forall t>0.
    \end{equation}
    Then, using the energy estimates \eqref{energy estimate1} and \eqref{energy estimate10}, we derive that
    \begin{equation}
        \label{diss-est2}
          \|\partial_x y(t)\|_{L^2(0,1)}\le c_3e^{-\frac{c_1 }{2}t}\|y_0\|_{L^2(0,1)}^{\frac{1}{2}}\tilde{C}\left(\|y(\tau)\|_{H^2(0,1)}\right),\;\;\forall t>\tau
    \end{equation}
   for some constant $\tilde{C}=\tilde{C}\left(\|y(\tau)\|_{H^2(0,1)}\right)$.

   Recall the auxiliary estimates $$\| y(\frac{\tau}{2})\|_{H^1(0,1)}^2\le C_1\left(\|y_0\|_{L^2(0,1)},\frac{\tau}{2}\right),\,\|y(\tau)\|^2_{H^2(0,1)}\le C_3\left(\| y(\frac{\tau}{2})\|_{H^1(0,1)},\tau,\frac{\tau}{2}\right),$$along with \eqref{diss-est2}, it readily leads to \eqref{diss-property-burgers}.
\end{proof}

\begin{theorem}
\label{global-exp-sta-burgers}
    Consider the Burgers equation with a boundary control
    \begin{equation}
    \label{Burgers-eq2}
\left\{\begin{array}{ll}
\partial_t y(t,x)- \partial_{xx}y(t,x)+y (t,x) \partial_x y(t,x)=0 & \text{for }(t,x)\in \mathbb{R}^+\times (0,1),\\[2mm]
y( t,0)=u(t)\quad\text{and}\quad y( t,1)=0& \text{for }t\in \mathbb{R}^+,\\[2mm]
y(t_0,x)=y_0(x)& \text{for }x\in (0,1).
\end{array}\right.
\end{equation}
Given any $\delta>0$, there is an operator $\mathcal{K}_1:L^2(0,1)\to \mathbb{R}$ depending on $\delta$, so that for each $y_0\in L^2(0,1)$, there exist constants ${k}_1=k_1\left(\delta,\|y_0\|_{L^2(0,1)}\right),{T}_1=T_1\left(\delta,\|y_0\|_{L^2(0,1)}\right)>1$ and $\hat{L}=\hat{L}\left(\delta,\|y_0\|_{L^2(0,1)}\right)\ge 1$, such that the system \eqref{Burgers-eq2} with 
\begin{equation}
    \label{u}
    {u}(t)=\int_{\min\{{T}_1,t\}}^t e^{k_1(t-s)}{\mathcal{K}}_1(y(s))ds
\end{equation}
admits a unique solution $y\in C([0,+\infty);L^2(0,1))$ satisfying
\begin{equation}
    \label{global-burgers}
    \|y(t)\|_{H^1(0,1)}\le\hat{L}e^{-\delta t},\;\;\forall t\ge 1.
\end{equation}
\end{theorem}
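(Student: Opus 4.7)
The plan is a two-phase strategy that exploits the truncated form of the control: since $\min\{T_1,t\}=t$ for $t\le T_1$, the integral defining $u(t)$ vanishes on $[0,T_1]$, so the state evolves freely (in open loop) on this initial window and the memory feedback only activates at time $T_1$. The overall idea is to let the natural dissipativity of the Burgers equation drag the state into the local basin of attraction of Theorem \ref{main-th1}, and then switch on the rapid-stabilization feedback.

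\textbf{Phase 1 (open-loop dissipation).} For $t\in[0,T_1]$ the system is precisely the uncontrolled Burgers equation \eqref{burgers-eq}. Parabolic smoothing together with Lemma \ref{diss-property} (applied with some fixed small $\tau$) yields
\begin{equation*}
\|y(t)\|_{H^1(0,1)} \le L(\|y_0\|_{L^2(0,1)},\tau)\,e^{-c_1 t/2},\qquad \forall\,t>\tau.
\end{equation*}
Denoting by $\rho=\rho(\delta)$ the threshold from Theorem \ref{main-th1}, I choose $T_1=T_1(\delta,\|y_0\|_{L^2(0,1)})>1$ so large that $L\,e^{-c_1 T_1/2}\le\rho$, which forces $\|y(T_1)\|_{H^1_0(0,1)}\le\rho$.

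\textbf{Phase 2 (closed-loop rapid stabilization).} I set $\mathcal{K}_1$ and $k_1$ equal to the feedback operator $\mathcal{K}$ and the scalar $k$ produced by Theorem \ref{main-th1} for the prescribed $\delta$. After the time shift $\tilde y(s):=y(T_1+s)$, $\tilde u(s):=u(T_1+s)$, a direct change of variables in the control formula \eqref{u} gives
\begin{equation*}
\tilde u(s)=\int_0^{s} e^{k_1(s-\sigma)}\mathcal{K}_1(\tilde y(\sigma))\,d\sigma,
\end{equation*}
so the shifted problem coincides exactly with the closed-loop system of Theorem \ref{main-th1} with initial datum $\tilde y(0)=y(T_1)\in H^1_0(0,1)$ of norm $\le\rho$. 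Since Lemma \ref{thm1.1-to-burs} has already verified assumption (H) for $f(y)=-y\partial_x y$, Theorem \ref{main-th1} applies and produces a unique solution on $[0,+\infty)$ satisfying $\|\tilde y(s)\|_{H^1(0,1)}\le M e^{-\delta s}\|y(T_1)\|_{H^1_0(0,1)}$, equivalently
\begin{equation*}
\|y(t)\|_{H^1(0,1)}\le M\rho\,e^{\delta T_1}\,e^{-\delta t},\qquad \forall\,t\ge T_1.
\end{equation*}

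\textbf{Gluing and the choice of $\hat L$.} On the transition window $t\in[1,T_1]$ the open-loop bound gives $\|y(t)\|_{H^1(0,1)}\le L\le L\,e^{\delta T_1}e^{-\delta t}$, while the closed-loop estimate covers $t\ge T_1$; setting $\hat L:=(L+M\rho)\,e^{\delta T_1}$ yields \eqref{global-burgers} on $[1,+\infty)$. The global solution $y\in C([0,+\infty);L^2(0,1))$ is obtained by concatenating the Burgers well-posedness on $[0,T_1]$ with the closed-loop solution from Theorem \ref{main-th1} on $[T_1,+\infty)$; uniqueness follows from uniqueness on each piece together with the matching $y(T_1^-)=y(T_1^+)$. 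The main obstacle is precisely this splicing: Theorem \ref{main-th1} demands an $H^1_0$ initial state, whereas we are only given $y_0\in L^2(0,1)$, which is exactly why the lower limit of the control integral is $\min\{T_1,t\}$ rather than $0$. The parabolic regularization of the Burgers equation (the $L^\infty(0,T;H^1_0)$ gain established in the well-posedness lemma) lifts the state into $H^1_0$ before the feedback is switched on, which is what makes the whole construction consistent.
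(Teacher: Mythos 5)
Your proposal is correct and follows essentially the same route as the paper: wait in open loop on $[0,T_1]$ so that the dissipativity estimate of Lemma \ref{diss-property} drives $\|y(T_1)\|_{H^1(0,1)}$ below the threshold $\rho$ of Theorem \ref{main-th1}, then switch on the (time-shifted) feedback $\mathcal{K}$, $k$ from that theorem and glue the two exponential bounds by an appropriate choice of $\hat L$. The only differences are cosmetic (the paper fixes $\tau=\tfrac12$ and takes $\hat L$ as a maximum of three constants rather than your $(L+M\rho)e^{\delta T_1}$), so nothing further is needed.
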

\begin{proof}
    From {Lemma} \ref{diss-property}, letting $\tau=\frac{1}{2}$, we have
    \begin{equation*}
         \| y(t)\|_{H^1(0,1)}\le L\left(\|y_0\|_{L^2(0,1)},\frac{1}{2}\right)e^{-\frac{c_1 }{2}t},\;\;\forall t>\frac{1}{2}.
    \end{equation*}
    Thus, there exists ${T}_1={T}_1\left(\delta,\|y_0\|_{L^2(0,1)}\right)\ge1$ such that
    \begin{equation*}
        \|y(T_1)\|_{H^1(0,1)}\le \rho,
    \end{equation*}
where \(\rho\) is defined in \eqref{rho}. For $t\in (0,{T}_1)$, no control is applied; for $t\in [{T}_1,+\infty)$, we adopt the same control constructed in {Theorem} \ref{main-th1}. In other words, the control takes the form as \eqref{u}. The well-posedness is obvious. Using the exponential stability \eqref{exp-s}, we have
\begin{equation*}
    \|y(t)\|_{H^1(0,1)}\le {{M}}e^{-\delta(t-{T}_1)}\|y({T}_1)\|_{H^1(0,1)}\le \rho {{M}}e^{\delta {T}_1}e^{-\delta t},\;\; \forall t\ge {T}_1.
\end{equation*}
Consider $t\in (1,{T}_1)$, we aim to choose $\hat{L}$ such that
\begin{equation*}
    Le^{-\frac{c_1}{2}t}\le \hat{L}e^{-\delta t},\;\;\forall 1<t<{T}_1,
\end{equation*}
which is equivalent to
\begin{equation*}
    \hat{L}\ge Le^{(\delta-\frac{c_1}{2})t},\;\;\forall 1<t<{T}_1.
\end{equation*}
Consequently, letting $\hat{L}\left(\delta,\|y_0\|_{L^2(0,1)}\right)=\max\left\{L,Le^{(\delta-\frac{c_1}{2}){T}_1},\rho {{M}}e^{\delta {T}_1}\right\},$ we finish the proof.
\end{proof}

\subsection{Allen-Cahn equation}

We next turn to the Allen-Cahn equation
\begin{equation}
    \label{C-Ieqs}
    \left\{\begin{array}{ll}
        \partial_t y(t,x)-\partial_{xx}y(t,x)-\kappa y(t,x) +\lambda y^3(t,x) =0&\quad \text{for }(t,x)\in (0,T)\times (0,1),\\
        y(t,0)=y(t,1)=0 &\quad \text{for }t\in (0,T),\\
        y(0,x)=y_0(x)&\quad \text{for }x\in (0,1).
\end{array}\right.
\end{equation}
Here $\lambda>0$ and $0\le \kappa< c_1 $. This is a well-posed semilinear heat equation. More precisely, given $y_0\in L^2(0,1)$, there exists a unique solution $y\in C([0,{T}];L^2(0,1))\cap L^2(0,{T};H_0^1(0,1))$. 

\begin{lemma}
    \label{thm1.1-to-ac}
     The nonlinear term $f(y)=-\lambda y^3$ with $\lambda>0$ satisfies the assumption (H).
\end{lemma}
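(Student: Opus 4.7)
The plan is to verify the two parts of assumption (H) separately via the algebraic factorization $y_1^3-y_2^3=(y_1-y_2)(y_1^2+y_1y_2+y_2^2)$ and the Sobolev embedding $H^1(0,1)\hookrightarrow L^\infty(0,1)$ recorded in \eqref{Agmon}. Since $f(y)=-\lambda y^3$ is a smooth algebraic nonlinearity, no analytical subtlety arises; the work is entirely bookkeeping of constants.

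For part (i), I would start from the pointwise identity $y_1^3-y_2^3=(y_1-y_2)(y_1^2+y_1y_2+y_2^2)$ and pull out the factor $y_1-y_2$ in $L^\infty$, giving
\begin{equation*}
\|y_1^3-y_2^3\|_{L^2(0,1)}\le \|y_1-y_2\|_{L^\infty(0,1)}\,\bigl\|y_1^2+y_1y_2+y_2^2\bigr\|_{L^2(0,1)}.
\end{equation*}
For $y_1,y_2\in B_{H^1}(y_0,r)$ the triangle inequality gives $\|y_i\|_{H^1(0,1)}\le \|y_0\|_{H^1(0,1)}+r$, and then \eqref{Agmon} yields a uniform $L^\infty$ bound $\|y_i\|_{L^\infty(0,1)}\le \tilde c_2(\|y_0\|_{H^1(0,1)}+r)$. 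A further application of $\|gh\|_{L^2}\le \|g\|_{L^\infty}\|h\|_{L^2}$ controls each of $y_1^2,\,y_1y_2,\,y_2^2$ in $L^2$ by $\tilde c_2^{\,2}(\|y_0\|_{H^1(0,1)}+r)^2$, and finally \eqref{Agmon} applied to $y_1-y_2$ converts the remaining $L^\infty$ factor into $H^1$. Combining these, one obtains
\begin{equation*}
\|f(y_1)-f(y_2)\|_{L^2(0,1)}\le 3\lambda \tilde c_2^{\,3}\bigl(\|y_0\|_{H^1(0,1)}+r\bigr)^{2}\,\|y_1-y_2\|_{H^1(0,1)},
\end{equation*}
so the required constant is $C(\|y_0\|_{H^1(0,1)},r)=3\lambda \tilde c_2^{\,3}(\|y_0\|_{H^1(0,1)}+r)^{2}$.

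For part (ii), the estimate is even shorter: for every $y\in H^1(0,1)$ one has
\begin{equation*}
\|f(y)\|_{L^2(0,1)}=\lambda\|y^3\|_{L^2(0,1)}\le \lambda\|y\|_{L^\infty(0,1)}^{2}\|y\|_{L^2(0,1)}\le \lambda\|y\|_{L^\infty(0,1)}^{2}\|y\|_{H^1(0,1)}.
\end{equation*}
Thus the function $c(x):=\lambda x^{2}$ is in $C([0,+\infty))$, is strictly increasing, satisfies $c(0)=0$, and gives the required growth bound. There is no real obstacle here; the only point to watch is to make sure the constant $C$ in (i) is allowed to depend on $\|y_0\|_{H^1(0,1)}$ and $r$ (which it does by the definition of (H)), and that $c$ in (ii) is a \emph{fixed} function independent of $y$ (which $\lambda x^{2}$ clearly is). With these two displays assembled, the lemma is proved.
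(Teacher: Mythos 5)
Your proposal is correct. Part (ii) is verbatim the paper's argument: bound $\lambda\|y^3\|_{L^2}$ by $\lambda\|y\|_{L^\infty}^2\|y\|_{H^1}$ and take $c(x)=\lambda x^2$. For part (i) you take a slightly different algebraic route: you factor $y_1^3-y_2^3=(y_1-y_2)(y_1^2+y_1y_2+y_2^2)$ and use H\"older in the form $\|gh\|_{L^2}\le\|g\|_{L^\infty}\|h\|_{L^2}$ together with the embedding \eqref{Agmon}, whereas the paper expands $y_1^3-y_2^3=(y_1-y_2)^3+3y_1y_2(y_1-y_2)$ and estimates the two pieces through $L^6$ norms, arriving at the constant $20\tilde c_2^3|\lambda|(r^2+\|y_0\|_{H^1(0,1)}^2)$. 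Both arguments rely only on $H^1(0,1)\hookrightarrow L^\infty(0,1)$ and the triangle inequality $\|y_i\|_{H^1}\le\|y_0\|_{H^1}+r$ on the ball, and your factorization gives a marginally cleaner Lipschitz constant; since (H)(i) only asks for \emph{some} constant depending on $\|y_0\|_{H^1}$ and $r$, either form is acceptable (indeed your stated constant $3\lambda\tilde c_2^3(\|y_0\|_{H^1}+r)^2$ is even slightly larger than what your displayed steps produce, which is harmless as $\tilde c_2>1$). No gap.
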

\begin{proof}
It is readily verified that for every $y_1,y_2\in B_{{H^1}}(y_0,r)$, we have
    \begin{equation*}
             \begin{aligned}
                 \|f(y_1)-f(y_2)\|_{L^2(0,1)} &\le|\lambda| \|(y_1-y_2)^3\|_{L^2(0,1)}+3|\lambda|\|y_1^2y_2-y_2^2y_1\|_{L^2(0,1)}\\
            &\le |\lambda|\|y_1-y_2\|^3_{L^6(0,1)}+3|\lambda|\|y_1-y_2\|_{L^6(0,1)}\left(\int_0^1 (y_1(x)y_2(x))^3 dx\right)^{\frac{1}{3}}\\
            &\le |\lambda| \tilde{c}_2^3\|y_1-y_2\|^3_{H^1(0,1)}+3|\lambda| \tilde{c}_2^3\|y_1-y_2\|_{H^1(0,1)}(\|y_1\|^2_{H^1(0,1)}+\|y_2\|^2_{H^1(0,1)})\\
            &\le 20\tilde{c}_2^3|\lambda|(r^2+\|y_0\|^2_{H^1(0,1)})\|y_1-y_2\|_{H^1(0,1)}.
             \end{aligned}
         \end{equation*}
       Furthermore, it holds that
       \begin{equation*}
                \|f(y)\|_{L^2(0,1)}= \| \lambda y^3\|_{L^2(0,1)}\le |\lambda|\|y\|^2_{L^{\infty}(0,1)}\|y\|_{H^1(0,1)},\; \forall y\in H^1(0,1).
         \end{equation*}
        Hence, (H) is satisfied with $C(\|y_0\|_{H^1(0,1)},r)=20\tilde{c}_2^3|\lambda|(r^2+\|y_0\|^2_{H^1(0,1)})$ and $c(x)=|\lambda|x^2$.
\end{proof}

The following lemma reveals the dissipativity of the Allen-Cahn equation, and we only present the main idea of its proof in {Appendix}.

\begin{lemma}\label{ac-diss}
For each $\tau \in (0,T)$ and $y_0\in L^2(0,1)$, there is a constant $Z=Z\left(\|y_0\|_{L^2(0,1)},\tau\right)$ so that the solution of \eqref{C-Ieqs} satisfies
     \begin{equation}
     \label{ci-last}
         \|y(t)\|_{H^1(0,1)}\le Z\left(\|y_0\|_{L^2(0,1)},\tau\right)e^{-\frac{c_1 -\kappa}{2}t},\;\;\forall t>\tau.
     \end{equation}
\end{lemma}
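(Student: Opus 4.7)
The plan is to mirror the proof of Lemma \ref{diss-property} for the Burgers equation, exploiting that the cubic term $\lambda y^{3}$ with $\lambda>0$ is dissipative in every energy estimate and therefore assists the decay rather than obstructing it. The strategy is: first, establish $L^{2}$-decay at rate $c_{1}-\kappa$; second, via parabolic smoothing, obtain successive $H^{1}$ and $H^{2}$ controls and a uniform-in-time $H^{2}$ bound for $t>\tau$; third, combine these through the Gagliardo--Nirenberg interpolation to produce $H^{1}$-decay at the half rate $(c_{1}-\kappa)/2$.

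For the $L^{2}$ estimate, testing \eqref{C-Ieqs} against $y$ and integrating by parts yields
\begin{equation*}
\tfrac{1}{2}\tfrac{d}{dt}\|y\|_{L^{2}(0,1)}^{2}+\|\partial_{x}y\|_{L^{2}(0,1)}^{2}-\kappa\|y\|_{L^{2}(0,1)}^{2}+\lambda\|y\|_{L^{4}(0,1)}^{4}=0.
\end{equation*}
Discarding the non-negative quartic term and invoking \eqref{po} together with the hypothesis $\kappa<c_{1}$ yields $\|y(t)\|_{L^{2}(0,1)}\le e^{-(c_{1}-\kappa)t}\|y_{0}\|_{L^{2}(0,1)}$ for all $t>0$, the analog of \eqref{energy estimate1}.

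Next, a chain of higher-order energy estimates analogous to \eqref{energy estimate3}--\eqref{energy estimate10} is developed. Testing against $-\partial_{xx}y$ produces the identity
\begin{equation*}
\tfrac{1}{2}\tfrac{d}{dt}\|\partial_{x}y\|_{L^{2}(0,1)}^{2}+\|\partial_{xx}y\|_{L^{2}(0,1)}^{2}-\kappa\|\partial_{x}y\|_{L^{2}(0,1)}^{2}+3\lambda\int_{0}^{1}y^{2}(\partial_{x}y)^{2}\,dx=0,
\end{equation*}
in which the cubic cross term has the favorable sign and can simply be discarded. Higher-level tests use the Sobolev embedding \eqref{Agmon} to tame any remaining cubic terms of the form $y^{2}(\partial_{x}^{k}y)^{2}$ through the already-controlled $\|y\|_{L^{\infty}}$. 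This furnishes the successive bounds $\|y(\tau/2)\|_{H^{1}(0,1)}^{2}\lesssim C(\|y_{0}\|_{L^{2}(0,1)},\tau)$ and $\|y(\tau)\|_{H^{2}(0,1)}^{2}\lesssim C(\|y(\tau/2)\|_{H^{1}(0,1)},\tau)$, together with the uniform bound $\|y(t)\|_{H^{2}(0,1)}^{2}\lesssim C(\|y(\tau)\|_{H^{2}(0,1)})$ for all $t>\tau$.

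Finally, the Gagliardo--Nirenberg inequality \eqref{interpolation} gives
\begin{equation*}
\|\partial_{x}y(t)\|_{L^{2}(0,1)}\le c_{3}\|y(t)\|_{L^{2}(0,1)}^{1/2}\|y(t)\|_{H^{2}(0,1)}^{1/2},
\end{equation*}
and combining with the $L^{2}$-decay from the first step and the uniform $H^{2}$-bound from the second produces $\|\partial_{x}y(t)\|_{L^{2}(0,1)}\lesssim e^{-(c_{1}-\kappa)t/2}$ for all $t>\tau$; one further application of \eqref{po} delivers \eqref{ci-last} with a suitable $Z(\|y_{0}\|_{L^{2}(0,1)},\tau)$. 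The principal obstacle is the higher-order smoothing chain for a non-Lipschitz nonlinearity; the favorable sign of the cubic term after every integration by parts, together with the already-established $L^{2}$-decay, is precisely what makes each estimate close and allows the argument to parallel Lemma~\ref{diss-property} verbatim.
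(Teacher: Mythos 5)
Your proposal is correct and follows essentially the same route as the paper: $L^2$-decay at rate $c_1-\kappa$, the Burgers-style smoothing chain giving an $H^1$ bound at $\tau/2$, an $H^2$ bound at $\tau$, and a uniform $H^2$ bound for $t>\tau$, then the interpolation \eqref{interpolation} combined with the $L^2$-decay to get the half-rate $H^1$ estimate. The only (harmless) difference is that you discard the cubic contributions by their favorable sign after integration by parts, whereas the paper keeps the Burgers template verbatim and bounds $\langle y^3,\partial_{xx}y\rangle$ and $\langle y^2 h,h\rangle$ via Agmon-type estimates \eqref{Agmon1}; both close the estimates.
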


Analogous to {Theorem \ref{global-exp-sta-burgers}}, we have the following result without proof.
\begin{theorem}
    Consider the Allen-Cahn equation with a boundary control
    \begin{equation}
    \label{C-I-eq2}
\left\{\begin{array}{ll}
\partial_t y(t,x)-\partial_{xx}y(t,x)-\kappa y(t,x) +\lambda y^3 (t,x)=0 & \text{for }(t,x)\in \mathbb{R}^+\times (0,1),\\[2mm]
y( t,0)=u(t)\quad\text{and}\quad y( t,1)=0& \text{for }t\in \mathbb{R}^+,\\[2mm]
y(t_0,x)=y_0(x)& \text{for }x\in (0,1),
\end{array}\right.
\end{equation}
where $\lambda>0$ and $0\le \kappa< c_1 $. Given any $\delta>0$, there is an operator $\mathcal{K}_2:L^2(0,1)\to \mathbb{R}$ depending on $\delta$, so that for each $y_0\in L^2(0,1)$, there exist constants ${k}_2=k_2\left(\delta,\|y_0\|_{L^2(0,1)}\right),{T}_2=T_2\left(\delta,\|y_0\|_{L^2(0,1)}\right)>1$ and $\hat{Z}=\hat{Z}\left(\delta,\|y_0\|_{L^2(0,1)}\right)\ge 1$, such that the system \eqref{C-I-eq2} with 
$${u}(t)=\int_{\min\{{T}_2,t\}}^t e^{k_2(t-s)}{\mathcal{K}}_2(y(s))ds$$ 
admits a unique solution $y\in C([0,+\infty);L^2(0,1))$ satisfying
\begin{equation*}
    \|y(t)\|_{H^1(0,1)}\le \hat{Z}e^{-\delta t}, \;\;\forall t\ge 1.
\end{equation*}
\end{theorem}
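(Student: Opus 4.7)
The plan is to mimic the two-phase strategy used for the Burgers equation in Theorem \ref{global-exp-sta-burgers}: first let the equation evolve freely and exploit the dissipativity of the Allen–Cahn nonlinearity to enter the local basin of attraction of Theorem \ref{main-th1}, then switch on the modal feedback to obtain the prescribed exponential decay rate $\delta$. The ingredient that makes this possible is Lemma \ref{thm1.1-to-ac}, which verifies that the cubic nonlinearity $f(y)=-\lambda y^3$ satisfies assumption (H), so Theorem \ref{main-th1} applies with an operator $\mathcal{K}_2$ and constants $\rho,k_2,M$ depending only on $\delta$ (note $q=\kappa$ and $b(x)\equiv 0$ here, and the hypothesis $0\le \kappa<c_1$ ensures $b+q<0$ in the weak sense needed to fit the framework, or more directly, it ensures dissipativity in $L^2$).

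The first step is to apply Lemma \ref{ac-diss} with, say, $\tau=\tfrac{1}{2}$. This yields
\begin{equation*}
\|y(t)\|_{H^1(0,1)}\le Z\!\left(\|y_0\|_{L^2(0,1)},\tfrac{1}{2}\right)e^{-\frac{c_1-\kappa}{2}t},\quad \forall t>\tfrac{1}{2}.
\end{equation*}
Since $c_1-\kappa>0$, I can choose $T_2=T_2(\delta,\|y_0\|_{L^2(0,1)})>1$ so large that $\|y(T_2)\|_{H^1(0,1)}\le \rho$, where $\rho=\rho(\delta)$ is the local basin radius from Theorem \ref{main-th1}. Parabolic regularity for the uncontrolled Allen–Cahn flow ensures $y(T_2)\in H^1_0(0,1)$, which is the correct space to feed into Theorem \ref{main-th1}.

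The second step is to define the control as $u(t)=0$ on $[0,T_2)$ and, for $t\ge T_2$, invoke Theorem \ref{main-th1} on the shifted system with initial data $y(T_2)$; this produces a feedback $u(t)=\int_{T_2}^t e^{k_2(t-s)}\mathcal{K}_2(y(s))\,ds$, and yields the closed-loop bound
\begin{equation*}
\|y(t)\|_{H^1(0,1)}\le M e^{-\delta(t-T_2)}\|y(T_2)\|_{H^1(0,1)}\le \rho M e^{\delta T_2}e^{-\delta t},\quad \forall t\ge T_2.
\end{equation*}
On the intermediate interval $[1,T_2]$ I still control things with the pure dissipativity bound $Z e^{-\frac{c_1-\kappa}{2}t}$, and the final constant is
\begin{equation*}
\hat{Z}=\max\!\left\{Z,\; Z e^{(\delta-\frac{c_1-\kappa}{2})T_2},\; \rho M e^{\delta T_2}\right\},
\end{equation*}
which depends only on $\delta$ and $\|y_0\|_{L^2(0,1)}$.

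The main obstacle I expect is a bookkeeping rather than analytic one: verifying that the concatenation of the free evolution on $[0,T_2)$ and the closed-loop trajectory on $[T_2,\infty)$ indeed yields a unique solution in $C([0,+\infty);L^2(0,1))$. The free phase is classical, and the controlled phase is well-posed by Theorem \ref{Well-posedness-th} together with the global existence established in Step 3 of the proof of Theorem \ref{th2} (triggered by $\|y(T_2)\|_{H^1_0(0,1)}\le \rho$). Continuity at $t=T_2$ holds because the lower integration limit in $u(t)$ is $\min\{T_2,t\}$, so $u(T_2)=0$ and the boundary trace is continuous. Once this is in place, the two decay estimates combine to give $\|y(t)\|_{H^1(0,1)}\le \hat{Z}e^{-\delta t}$ for all $t\ge 1$, completing the proof.
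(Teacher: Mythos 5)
Your proposal is correct and follows exactly the route the paper intends: the paper states this theorem without proof, declaring it ``analogous to Theorem \ref{global-exp-sta-burgers}'', and your two-phase argument (free evolution plus Lemma \ref{ac-diss} with $\tau=\tfrac12$ to reach $\|y(T_2)\|_{H^1(0,1)}\le\rho$, then the feedback of Theorem \ref{main-th1} via Lemma \ref{thm1.1-to-ac}, with $\hat Z=\max\{Z,\,Ze^{(\delta-\frac{c_1-\kappa}{2})T_2},\,\rho Me^{\delta T_2}\}$) is precisely the Burgers proof transposed. The only blemish is the parenthetical claim that $0\le\kappa<c_1$ gives ``$b+q<0$'': that is neither true nor needed; to fit the framework of \eqref{Dirichlet-sys} one simply writes $\kappa y=-\epsilon y+(\kappa+\epsilon)y$ with $b\equiv-\epsilon<0$ and $q=\kappa+\epsilon$, while the hypothesis $\kappa<c_1$ is only used for the dissipativity rate in Lemma \ref{ac-diss}.
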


\begin{appendices}

  \renewcommand{\thesection}{}
  \titleformat{\section}[block]{\Large\bfseries}{\thesection}{0em}{}
  \renewcommand{\theequation}{A.\arabic{equation}}
\section{Appendix: Proofs of two lemmas}
The proofs can be conducted similarly through the multiplier technique presented in \cite[Section 3.1]{S}.

\begin{1proofof}{Lemma \ref{energy-estimatet}}

\noindent     (i) \textbf{$L^2$-norm of $y(t)$.  } Taking the inner product of the first equality of \eqref{burgers-eq} with $2 y(t)$, this yields
\begin{equation}
\label{L2}
    \partial_t\left(\left\|y(t)\right\|^2_{L^2(0,1)}\right)+2 \left\|\partial_x y(t)\right\|_{L^2(0,1)}^2=0,\;\;\forall t>0.
\end{equation}
Using the Poincar\'{e} inequality \eqref{po}, we derive
$$
\partial_t\left(\|y(t)\|^2_{L^2(0,1)}\right)+2 c_1 \|y(t)\|_{L^2(0,1)}^2 \le 0,\;\;\forall t>0,
$$
which implies that
$$\quad\|y(t)\|^2_{L^2(0,1)} \leq e^{-2 c_1  t}\left\|y_0\right\|_{L^2(0,1)}^2,\;\;\forall t>0.$$
\textbf{Mean $L^2$-norm of $\partial_x y(t)$.  } Integrating \eqref{L2} with respect to $t$, we have
$$
\int_{0}^t\|\partial_x y(s)\|^2_{L^2(0,1)} d s \le \frac{\|y_0\|^2_{L^2(0,1)}}{2 },\;\;\forall t>0.
$$
\textbf{$H^1$-norm of $y(t)$.   } Let $\tilde{t}\in (0,t)$. Taking the inner product of the first equality of \eqref{burgers-eq} with $-2(t-\tilde{t}) \partial_{xx} y(t)$, it is not difficult to check that
\begin{equation*}
\label{L2-H1}
\begin{aligned}
\partial_t\left((t-\tilde{t})\| \partial_x y(t) \|_{L^2(0,1)}^2\right) &- \|\partial_x y(t)\|_{L^2(0,1)}^2+2(t-\tilde{t}) \| \partial_{x x} y(t) \|_{L^2(0,1)}^2 = 2(t-\tilde{t})\left\langle y(t) \partial_x y(t), \partial_{x x} y(t)\right\rangle.
\end{aligned}
\end{equation*}
Using the inequality \eqref{Agmon1}, we bound the nonlinear term by
\begin{equation}
\label{nonlinear-product1}
    \left\langle y(t) \partial_x y(t), \partial_{x x} y(t)\right\rangle \le \frac{c_2^2}{4}\|y(t)\|_{L^2(0,1)}^2\|\partial_x y(t)\|_{L^2(0,1)}^4+ \frac{c_2^2}{4}\|\partial_x y(t)\|_{L^2(0,1)}^2+\frac{1}{2}\|\partial_{xx}y(t)\|_{L^2(0,1)}^2,
\end{equation}
it leads to
$$
\begin{aligned}
(s-\tilde{t})\|\partial_x y(s)\|_{L^2(0,1)}^2 &\le (1+\frac{c_2^2}{2 }t)\frac{\|y_0\|_{L^2(0,1)}^2}{2}\cdot e^{\frac{c_2^2}{4 }\|y_0\|_{L^2(0,1)}^4},\;\;\forall \tilde{t}<s\le t.
\end{aligned}
$$
Letting $s=t$ and $\tilde{t} \rightarrow 0$, we draw that 
$$
\left\|y(t)\right\|^2_{H^1(0,1)} \le 2\|y_0\|^2_{L^2(0,1)} +\frac{(1+\frac{c_2^2}{2 } t) \cdot\left\|y_0\right\|_{L^2(0,1)}^2 \cdot e^{\frac{c_2^2}{4 }\left\|y_0\right\|_{L^2(0,1)}^4}}{t}:=C_1\left(\|y_0\|_{L^2(0,1)},t\right),\;\;\forall t>0.
$$

\noindent (ii) \textbf{$H^1$-norm of $y(t)$.   } Taking the inner product of \eqref{burgers-eq} with $- \partial_{x x} y(t)$, this yields
\begin{equation}
    \label{H1}
\begin{aligned}
\frac{1}{2} \partial_t\left(\|\partial_x y(t))\|_{L^2(0,1)}^2\right)&+ \| \partial_{x x} y(t)\|_{L^2(0,1)}^2\le \frac{c_2^2}{4 }\left\|y(t)\right\|_{L^2(0,1)}^2\left\|\partial_x y{(t)}\right\|_{L^2(0,1)}^4\\
&\quad\quad\quad\quad\quad\quad+\frac{c_2^2}{4 }\left\|\partial_x y(t)\right\|_{L^2(0,1)}^2+\frac{1}{2}\|\partial_{xx}y(t)\|^2.
\end{aligned}
\end{equation}
It follows that
$$
\begin{aligned}
\left\| y(t)\right\|^2_{H^1(0,1)} &\le 2\|y(\tau)\|_{L^2(0,1)}^2+ (2+\frac{c_2^2}{2 })\|y(\tau)\|_{H^1(0,1)}^2  e^{\frac{c_2^2}{4 }\|y(\tau)\|_{H^1(0,1)}^4}:=C_2\left(\|y(\tau)\|_{H^1(0,1)}\right),\;\;\forall t>\tau.
\end{aligned}
$$
\textbf{Mean $L^2$-norm of $\partial_{xx} y(t).$   } Integrating \eqref{H1} with respect to $t$, we derive
\begin{equation*}
\begin{aligned}
    \int_{\tau}^t\|\partial_{xx} y(s)\|_{L^2(0,1)}^2ds&\le (1+\frac{c_2^2}{4 })\left\|y(\tau)\right\|_{H^1(0,1)}^2+\frac{c_2^2C_2}{4 }\left\|y(\tau)\right\|_{H^1(0,1)}^4:=M_1\left(\left\|y(\tau)\right\|_{H^1(0,1)}\right),\;\;\forall t>\tau.
    \end{aligned}
\end{equation*}
\textbf{Mean $L^2$-norm of $\partial_{t} y(t).$   } Since $\partial_t y(t,x)- \partial_{xx}y(t,x)+ y(t,x)\partial_x y(t,x)=0$, we have
\begin{equation*}
\begin{aligned}
    \int_{\tau}^t \|\partial_s y(s)\|^2_{L^2(0,1)} d s&\le 2 \int_{\tau}^t \|\partial_{xx} y(s)\|^2_{L^2(0,1)} d s+2 \int_{\tau}^t \|y(s)\partial_x y(s)\|^2_{L^2(0,1)} d s\\
    &\le 2M_1+\frac{c
    _2^2}{2}\|y(\tau)\|_{H^1(0,1)}^2(C_2\|y(\tau)\|_{H^1(0,1)}^2+1)\\
&:=M_2\left(\left\|y(\tau)\right\|_{H^1(0,1)}\right),\;\;\;\;\;\;\;\;\;\;\;\;\;\;\forall t>\tau.
    \end{aligned}
\end{equation*}
\textbf{$L^2$-norm of $\partial_t y(t).$   } Define $h(t, x):=\partial_t y(t, x)$. Then $h$ satisfies that
\begin{equation}
    \label{hweak}
    \partial_t h(t, x)- \partial_{x x} h(t, x)+ h(t, x) \partial_x y{(t, x)}+ y{(t, x)} \partial_x h(t, x)=0.
\end{equation}
It is not hard to check that
\begin{equation*}
\label{yt-L2}
\partial_t\left(\left(t-\tau\right)\|h(t)\|_{L^2(0,1)}^2\right)-\|h(t)\|_{L^2(0,1)}^2+2\left(t-\tau\right)  \| \partial_x h(t) \|_{L^2(0,1)}^2=2(t-\tau) \langle y{(t)} \partial_x h(t), h(t)\rangle.
\end{equation*}
Note that
\begin{equation}
    \label{nonlinear-product2}
    \left\langle y(t) \partial_x h(t), h{(t)}\right\rangle \le \frac{c^2_2 }{2}\|y(t)\|_{L^2(0,1)}\|\partial_x y(t)\|_{L^2(0,1)}\| h(t)\|_{L^2(0,1)}^2+\frac{1}{2}\| \partial_x h(t)\|_{L^2(0,1)}^2,
\end{equation}
which leads to
\begin{equation*}
    \|h(t)\|_{L^2(0,1)}^2\le \frac{M_2\cdot e^{(\frac{c_2^2}{4c_1}+\frac{c_2^2}{4 })\|y(\tau)\|^2_{H^1(0,1)}}}{t-\tau}:=M_3\left(\|y(\tau)\|_{H^1(0,1)},t,\tau\right),\;\;\forall t>\tau.
\end{equation*}
\textbf{$H^2$-norm of $y(t).$   } From the identity $\partial_t y(t,x)- \partial_{xx}y(t,x)+y(t,x)\partial_x y(t,x)=0$, we have
\begin{equation*}
\begin{aligned}
    \|\partial_{xx} y(t)\|_{L^2(0,1)}^2 &\le 2  \|\partial_{t} y(t)\|_{L^2(0,1)}^2 +2  \|y(t)\partial_x y(t)\|_{L^2(0,1)}^2\\
    &\le 2M_3+2c
    _2^2 C^3_2\|y(\tau)\|_{H^1(0,1)}.
    \end{aligned}
\end{equation*}
Then it holds that
$$\|y(t)\|^2_{H^2(0,1)}\le 2C_2+4M_3+2c
    _2^2 C^3_2\|y(\tau)\|_{H^1(0,1)}:=C_3\left(\|y(\tau)\|_{H^1(0,1)},t,\tau\right),\;\;\forall t>\tau.$$

 \noindent   (iii) \textbf{$L^2$-norm of $\partial_t y(t).$   } Taking the inner product of \eqref{hweak} wish $h(t)$, we derive
\begin{equation*}
    \label{yt-h1}
\partial_t\left(\frac{1}{2}\|h(t)\|_{L^2(0,1)}^2\right)+ \left\|\partial_x h(t)\right\|^2_{L^2(0,1)}  =\left\langle y(t) \partial_x h(t), h{(t)}\right\rangle.
\end{equation*}
From \eqref{nonlinear-product2}, it is readily verified that
$$
\partial_t\left(\frac{1}{2}\|h(t)\|_{L^2(0,1)}^2\right) \le \frac{c_2^2 }{2 }\|y{(t)}\|_{L^2(0,1)}\|\partial_x y(t)\|_{L^2(0,1)}\|h(t)\|_{L^2(0,1)}^2,
$$
and thus
$$
\left\|h(t)\right\|_{L^2(0,1)}^2\le \left\|h(\tau)\right\|_{L^2(0,1)}^2 e^{(\frac{c_2^2}{4c_1}+\frac{c_2^2}{4 })\|y(\tau)\|^2_{H^1(0,1)}},\;\;\forall t>\tau.
$$
Note that $\left\|h(\tau)\right\|_{L^2(0,1)}^2 \le 2\left\|\partial_{xx} y(\tau)\right\|_{L^2(0,1)}^2 +2\left\|y(\tau)\partial_x y(\tau)\right\|_{L^2(0,1)}^2 \lesssim \left\|y(\tau)\right\|_{H^2(0,1)}^2+\left\|y(\tau)\right\|_{H^2(0,1)}^4$. Hence, it follows that
$$
\left\|h(t)\right\|^2_{L^2(0,1)}\le C(\left\|y(\tau)\right\|_{H^2(0,1)}^2+\left\|y(\tau)\right\|_{H^2(0,1)}^4)e^{(\frac{c_2^2}{4c_1{ }^2}+\frac{c_2^2}{4 })\|y(\tau)\|_{H^1(0,1)}}:=M_4\left(\|y(\tau)\|_{H^2(0,1)}\right),\;\;\forall t>\tau.
$$
   \textbf{$H^2$-norm of $y(t).$   } Since $\partial_t y(t,x)-  \partial_{xx}y(t,x)+y(t,x)\partial_x y(t,x)=0$, we draw that
\begin{equation*}
\begin{aligned}
    \|\partial_{xx} y(t)\|_{L^2(0,1)}^2 &\le 2 \|\partial_{t} y(t)\|_{L^2(0,1)}^2 +2  \|y(t)\partial_x y(t)\|_{L^2(0,1)}^2\le 2 M_4+2c
    _2^2 C^3_2\|y(\tau)\|_{H^1(0,1)}.
    \end{aligned}
\end{equation*}
Therefore, we derive
$$\|y(t)\|^2_{H^2(0,1)}\le 2C_2+4 M_4+2c
    _2^2 C^3_2\|y(\tau)\|_{H^1(0,1)}:=C_4\left(\|y(\tau)\|_{H^2(0,1)}\right),\;\;\forall t>\tau.$$
    
    \noindent The proof is completed.
\end{1proofof}
\begin{2proofof}{Lemma \ref{ac-diss}}
    The proof is similar to the case of the Burgers equation, except the estimates for $\left\langle y^3(t), \partial_{x x} y(t)\right\rangle$ and $\langle y^2{(t)} h(t), h(t)\rangle$. To begin with, taking the inner product of the first equality of \eqref{C-Ieqs} with $2 y(t)$, we have
\begin{equation}
\label{L2-C-I}
    \partial_t\left(\|y(t)\|_{L^2(0,1)}^2\right)+2 \|\partial_x y(t)\|_{L^2(0,1)}^2+2\lambda\|y^2(t)\|_{L^2(0,1)}^2=2\kappa\|y(t)\|_{L^2(0,1)}^2. 
\end{equation}
Using the inequality \eqref{Agmon1}, we obtain that
$$
\partial_t\left(\|y(t)\|_{L^2(0,1)}^2\right)+2 (c_1 -\kappa) \|y(t)\|_{L^2(0,1)}^2 \le 0,\;\;\forall t>0,
$$
and thus
$$\quad\|y(t)\|_{L^2(0,1)}^2 \leq e^{-2 (c_1 -\kappa) t}\left\|y_0\right\|_{L^2(0,1)}^2\le \left\|y_0\right\|_{L^2(0,1)}^2,\;\;\forall t>0.$$ 
 Analogous to \eqref{nonlinear-product1} and \eqref{nonlinear-product2}, we have
$$
\begin{aligned}
\left\langle y^3(t), \partial_{x x} y(t)\right\rangle &\le\|y^3(t)\|_{L^2(0,1)}\|\partial_{x x} y(t)\|_{L^2(0,1)} \\
&\le \|y(t)\|^3_{L^{\infty}(0,1)}\|\partial_{x x} y(t)\|_{L^2(0,1)}\\
&\le c^3_2\|y(t)\|_{L^2(0,1)}^{\frac{3}{2}}\|\partial_x y(t)\|_{L^2(0,1)}^{\frac{3}{2}}\|\partial_{x x} y(t)\|_{L^2(0,1)}\\
&\le \frac{c^6_2}{2}\left\|y(t)\right\|^3_{L^2(0,1)}\left\|\partial_x y(t)\right\|_{L^2(0,1)}^3+\frac{1}{2}\left\|\partial_{x x} y(t)\right\|_{L^2(0,1)}^2\\
&\le \frac{c_2^6}{4}\|y(t)\|_{L^2(0,1)}^2\|\partial_x y(t)\|_{L^2(0,1)}^4+ \frac{c_2^6}{4}\|y(t)\|_{L^2(0,1)}^4\|\partial_x y(t)\|_{L^2(0,1)}^2+\frac{1}{2}\|\partial_{xx}y(t)\|_{L^2(0,1)}^2\\
&\le \frac{c_2^6}{4}\|y(t)\|_{L^2(0,1)}^2\|\partial_x y(t)\|_{L^2(0,1)}^4+ \frac{c_2^6}{4}\|y_0\|_{L^2(0,1)}^4\|\partial_x y(t)\|_{L^2(0,1)}^2+\frac{1}{2}\|\partial_{xx}y(t)\|_{L^2(0,1)}^2,
\end{aligned}
$$
and
$$
\begin{aligned}
\langle y^2{(t)}  h(t), h(t)\rangle  &\le \|y^2{(t)}h(t)\|_{L^2(0,1)}\| h(t)\|_{L^2(0,1)}\\
&\le \|y{(t)}\|^2_{L^{\infty}(0,1)}\| h(t)\|_{L^2(0,1)}^2\\
&\le c_2^2\|y(t)\|_{L^2(0,1)}\|\partial_x y(t)\|_{L^2(0,1)}\| h(t)\|_{L^2(0,1)}^2.
\end{aligned}
$$
Hence, with reference to {Lemma \ref{energy-estimatet}} and {Lemma \ref{diss-property}}, similar energy estimates and dissipativity can be derived. We omit the details of the remaining part.
\end{2proofof}
\end{appendices}


\begin{thebibliography}{00}
\bibitem{B}
V. Barbu, Boundary stabilization of equilibrium solutions to parabolic equations, IEEE Trans. Automat. Control, 58 (2013), 2416–2420.
\bibitem{BW}
V. Barbu, G. Wang, Feedback stabilization of semilinear heat equations, Abstr. Appl. Anal., 2003, 697-714.
\bibitem{Brezis}
H. Brézis, Functional Analysis, Sobolev Spaces and Partial Differential Equations, Springer, New York, 2011.

\bibitem{CN}
J. M. Coron, H. M. Nguyen, Null controllability and finite time stabilization for the heat equations with variable coefficients in space in one dimension via backstepping approach, Arch. Ration. Mech. Anal., 225 (2017), 993–1023.

\bibitem{CT1}
J. M. Coron, E. Trélat, Global steady-state controllability of one-dimensional semilinear heat equations, SIAM J. Control Optim., 43 (2004), 549–569.

\bibitem{CX}
J. M. Coron, S. Xiang, Small-time global stabilization of the viscous Burgers equation with three scalar controls, J. Math. Puers Appl., 151 (2021), 212–256.

\bibitem{KF}
R. Katz, E. Fridman, Constructive method for finite-dimensional observer-based control of 1-D parabolic PDEs, Automatica, 122 (2020), Paper No. 109285, 10 pp.

\bibitem{KF2}
R. Katz, E. Fridman, Global finite-dimensional observer-based stabilization of a semilinear heat equation with large input delay, System Control Lett., 165 (2022), Paper No. 105275, 10 pp.

\bibitem{KF3}
R. Katz, E. Fridman, Global stabilization of a 1D semilinear heat equation via modal decomposition and direct Lyapunov approach, Automatica, 149 (2023), Paper No. 110809, 10 pp.

\bibitem{LHM}
H. Liu, P. Hu, I. Munteanu, Boundary feedback stabilization of Fisher’s equation, System Control Lett., 97 (2016), 55–60.
\bibitem{LWXY}
H. Liu, G. Wang, Y. Xu, H. Yu, Characterizations of complete stabilizability, SIAM J. Control Optim., 60 (2022), 2040–2069.
\bibitem{L1}
H. Lhachemi, Local output feedback stabilization of a nonlinear Kuramoto–Sivashinsky equation, IEEE Trans. Automat. Control, 68 (2023), 8165–8170.

\bibitem{LP1}
H. Lhachemi, C. Prieur, Finite-dimensional observer-based boundary stabilization of reaction–diffusion equations with either a Dirichlet or Neumann boundary measurement, Automatica, 135 (2022), Paper No. 109955, 9 pp.

\bibitem{LP2}
H. Lhachemi, C. Prieur, Local output feedback setpoint control of an input-delayed viscous Burgers’ equation, IMA J. Math. Control Inform., 42 (2025), Paper No. dnaf015, 23 pp.

\bibitem{LP3}
H. Lhachemi, C. Prieur, Global output feedback stabilization of semilinear reaction-diffusion PDEs, IFAC-PapersOnLine, 55 (2022), 53–58.

\bibitem{M2}
I. Munteanu, Stabilisation of parabolic semilinear equations, Internat. J. Control, 90 (2017), 1063–1076.
\bibitem{Pazy}
A. Pazy, Semigroups of Linear Operators and Applications to Partial Differential Equations, Springer, New York, 1983.

\bibitem{PWF}
L. Pan, P. Wang, E. Fridman, Sampled-data finite-dimensional observer-based control of 1-D Burgers’ equation, System Control Lett., 193 (2024), Paper No. 105919, 12 pp.

\bibitem{S}
A. Shirikyan, Global exponential stabilisation for the Burgers equation with localised control, J. E\'c. polytech. Math., 4 (2017), 613–632.
\bibitem{SWZ}
Y. Shang, Z. Wu, H. Zhou, Finite-dimensional observer-based boundary control for a one-dimensional stochastic heat equation, System Control Lett., 197 (2025), Paper No. 106046, 12 pp.

\bibitem{Trélat}
E. Trélat, Control in Finite and Infinite Dimension, Springer, Singapore, 2024.
\bibitem{TWX}
E. Tr\'elat, G. Wang, Y. Xu, Characterization by observability inequalities of controllability and stabilization properties, Pure Appl. Anal., 2 (2020), 93–122.
\bibitem{TW}
M. Tucsnak, G. Weiss, Observation and Control for Operator Semigroups, Birkhäuser, Basel, 2009.
\bibitem{TWK}
J. Q. Tang, J. M. Wang, W. Kang, Boundary feedback stabilization of an unstable cascaded heat–heat system with different reaction coefficients, System Control Lett., 183 (2024), Paper No. 105684, 11 pp.

\bibitem{Weissler}
F. B. Weissler, Local existence and nonexistence for semilinear parabolic equations in Lp, Indiana Univ. Math. J., 29 (1980), 79–102.

\bibitem{X1}
S. Xiang, Small-time local stabilization of the two-dimensional incompressible Navier–Stokes equations, Ann. Inst. H. Poincar\'e C Anal. Non Lineair\'e, 40 (2023), 1487–1511.
\bibitem{X}
S. Xiang, Quantitative rapid and finite time stabilization of the heat equation, ESAIM Control Optim. Calc. Var., 30 (2024), Paper No. 40, 25 pp.

\end{thebibliography}
\end{document}